\def\blfootnote{\gdef\@thefnmark{}\@footnotetext}
\theoremstyle{plain}
\newtheorem{thm}{Theorem}[section]
\newtheorem{prop}[thm]{Proposition}
\newtheorem{lem}[thm]{Lemma}
\newtheorem{cor}[thm]{Corollary}
\newtheorem{que}[thm]{Question}
\theoremstyle{definition}
\newtheorem{dfn}[thm]{Definition}
\newtheorem{exa}[thm]{Example}
\newtheorem*{ack}{Acknowledgements}
\theoremstyle{remark}
\newtheorem{rmk}[thm]{Remark}
\numberwithin{equation}{section}
\title{Categorical entropy, (co-)t-structures and ST-triples}
\author{Jongmyeong Kim}
\address{Center for Geometry and Physics, Institute for Basic Science (IBS), Pohang 37673, Republic of Korea}
\email{myeong@ibs.re.kr}
\begin{document}

\begin{abstract}
In this paper, we study a dynamical property of an exact endofunctor $\Phi : \mathcal{D} \to \mathcal{D}$ of a triangulated category $\mathcal{D}$.
In particular, we are interested in the following question: Given full triangulated subcategories $\mathcal{A},\mathcal{B} \subset 
\mathcal{D}$ such that $\Phi(\mathcal{A}) \subset \mathcal{A}$ and $\Phi(\mathcal{B}) \subset \mathcal{B}$, how the categorical entropies of $\Phi|_\mathcal{A}$ and $\Phi|_\mathcal{B}$ are related?
To answer this question, we introduce new entropy-type invariants using bounded (co-)t-structures with finite (co-)hearts and prove their basic properties.
We then apply these results to answer our question for the situation where $\mathcal{A}$ has a bounded t-structure and $\mathcal{B}$ has a bounded co-t-structure which are, in some sense, dual to each other.
\end{abstract}

\maketitle

\blfootnote{\textit{2020 Mathematics Subject Classification}. Primary 18G80; Secondary 14F08, 53D37\\
\indent\textit{Key Words and Phrases}. Categorical entropy, (co-)t-structures, ST-triples}

\section{Introduction}

\subsection{Categorical dynamical systems}

A {\em categorical dynamical system} $(\mathcal{D},\Phi)$ is a pair of a triangulated category $\mathcal{D}$ and an exact endofunctor $\Phi : \mathcal{D} \to \mathcal{D}$.
The study of a categorical dynamical system as an analogue of a topological dynamical system was initiated by Dimitrov--Haiden--Katzarkov--Kontsevich \cite{DHKK}.
As in the case of a topological dynamical system, they introduced the notion of {\em categorical entropy} $h_t(\Phi)$ as an invariant measuring the dynamical complexity of a categorical dynamical system $(\mathcal{D},\Phi)$ (see Definition \ref{ent}).

The categorical entropy shares many properties with the topological entropy.
However there are still many elementary properties of the topological entropy which do not have categorical counterparts.
For instance, suppose we have a topological dynamical system $(X,f)$, i.e., a pair of a Hausdorff space $X$ and a continuous self-map $f : X \to X$, and subspaces $Y_1,\dots,Y_n \subset X$ such that $X = \cup_{i=1}^n Y_i$ and $f(Y_i) \subset Y_i$ for every $1 \leq i \leq n$.
Then we have topological dynamical systems $(Y_1,f|_{Y_1}),\dots,(Y_n,f|_{Y_n})$ and $(X,f)$.
It is known that their topological entropies satisfy
\begin{equation*}
h_\mathrm{top}(f) = \max_{1 \leq i \leq n} h_\mathrm{top}(f|_{Y_i})
\end{equation*}
(see \cite[Section 1.6]{Gro}).
For the categorical entropy, one can consider an analogous question.

\begin{que}
Let $(\mathcal{D},\Phi)$ be a categorical dynamical system and $\mathcal{C}_1,\dots,\mathcal{C}_n \subset \mathcal{D}$ be full triangulated subcategories such that the smallest thick subcategory of $\mathcal{D}$ containing $\mathcal{C}_1,\dots,\mathcal{C}_n$ coincides with $\mathcal{D}$ and $\Phi(\mathcal{C}_i) \subset \mathcal{C}_i$ for every $1 \leq i \leq n$.
Then does it hold that
\begin{equation*}
h_t(\Phi) = \max_{1 \leq i \leq n} h_t(\Phi|_{\mathcal{C}_i})?
\end{equation*}
\end{que}

If $\mathcal{D}$ admits a (weak) semiorthogonal decomposition $\langle\mathcal{C}_1,\dots,\mathcal{C}_n\rangle$ and $\Phi : \mathcal{D} \to \mathcal{D}$ is an exact autoequivalence such that $\Phi(\mathcal{C}_i) \subset \mathcal{C}_i$ for every $1 \leq i \leq n$ then this is true (see Corollary \ref{sod}).
However, to the author's knowledge, this is still unknown for general $\mathcal{C}_1,\dots,\mathcal{C}_n$.

In this paper, we consider a similar question.

\begin{que}\label{ques}
Let $(\mathcal{D},\Phi)$ be a categorical dynamical system and $\mathcal{A},\mathcal{B} \subset \mathcal{D}$ be full triangulated subcategories such that $\Phi(\mathcal{A}) \subset \mathcal{A}$ and $\Phi(\mathcal{B}) \subset \mathcal{B}$.
Then what is the relationship between $h_t(\Phi|_\mathcal{A})$ and $h_t(\Phi|_\mathcal{B})$?
\end{que}

\subsection{Main theorems}

To give a partial answer to Question \ref{ques}, we will make use of (co-)t-structures.
First, for a given categorical dynamical system $(\mathcal{D},\Phi)$ and a bounded t-structure $\mathfrak{a}$ with finite heart, we define a new entropy-type invariant $\hat{h}_t^\mathfrak{a}(\Phi)$ (see Definition \ref{t-ent}).
Similarly we also define another entropy-type invariant $\check{h}_t^\mathfrak{b}(\Phi)$ using a bounded co-t-structure $\mathfrak{b}$ with finite heart (see Definition \ref{c-ent}).
These invariants can be considered as analogues of the mass growth introduced by Dimitrov--Haiden--Katzarkov--Kontsevich \cite{DHKK} and studied by Ikeda \cite{Ike}.

Our first main theorem states that $\hat{h}_t^\mathfrak{a}(\Phi)$ and $\check{h}_t^\mathfrak{b}(\Phi)$ coincide with the categorical entropy $h_t(\Phi)$.

\begin{thm}[see Theorems \ref{t-thm-1} and \ref{c-thm-1}]\label{main-1}
Let $\mathcal{D}$ be a $\mathbf{K}$-linear idempotent complete triangulated category and $\Phi : \mathcal{D} \to \mathcal{D}$ be an exact endofunctor.
\begin{enumerate}
\item Let $\mathfrak{a}$ be a bounded t-structure on $\mathcal{D}$ with finite heart.
Then we have
\begin{equation*}
h_t(\Phi) = \hat{h}_t^\mathfrak{a}(\Phi).
\end{equation*}
\item Let $\mathfrak{b}$ be a bounded co-t-structure on $\mathcal{D}$ with finite co-heart.
Then we have
\begin{equation*}
h_t(\Phi) = \check{h}_t^\mathfrak{b}(\Phi).
\end{equation*}
\end{enumerate}
\end{thm}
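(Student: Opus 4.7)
My plan is to reduce both parts of the theorem to a common template: choose a distinguished split generator tied to the (co-)t-structure, then compare the combinatorial complexity appearing in $h_t(\Phi)$ with the intrinsic (co-)t-structure mass appearing in the new invariant, object by object. The growth rates will then coincide by a sandwich argument analogous to the one Ikeda uses for mass growth from a stability condition.

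For part (1), I would first use the finiteness of the heart $\mathcal{A}$ of $\mathfrak{a}$ to fix a split generator $G$ of $\mathcal{D}$, taken to be the direct sum of a representative set of simples of $\mathcal{A}$; the finiteness together with boundedness of $\mathfrak{a}$ ensure that $G$ split-generates $\mathcal{D}$, so that $h_t(\Phi)$ may be computed as the exponential growth rate of the complexity $\delta_t(G,\Phi^n G)$. The key intermediate step is to prove, for every $X\in\mathcal{D}$, a two-sided estimate comparing $\delta_t(G,X)$ with the $\mathfrak{a}$-mass $\hat{m}_t^\mathfrak{a}(X)$ underlying $\hat{h}_t^\mathfrak{a}$, and in particular one that preserves exponential growth rates as $X=\Phi^n G$ varies. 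The upper bound $\delta_t(G,X)\le\hat{m}_t^\mathfrak{a}(X)$ comes from writing $X$ as an iterated extension along the truncation functors of $\mathfrak{a}$, with factors in shifts of the heart, and then decomposing each factor along a composition series of finite length; this produces an explicit presentation of $X$ as a tower of shifts of simples in $G$ whose associated Poincar\'e polynomial in $e^t$ matches $\hat{m}_t^\mathfrak{a}(X)$. The reverse bound uses triangle-subadditivity of $\hat{m}_t^\mathfrak{a}$ together with its values $\hat{m}_t^\mathfrak{a}(S[i])=e^{it}$ on shifted simples, applied inductively along any presentation witnessing $\delta_t(G,X)$.

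For part (2), I would argue dually using the co-t-structure $\mathfrak{b}$: the finite co-heart supplies a silting object $M$, namely the direct sum of indecomposables, which is again a split generator of $\mathcal{D}$, and I would replace the truncation tower of the t-structure by the silting decomposition of $X$ provided by the co-truncation functors of $\mathfrak{b}$. Objects of the co-heart are not filtered by simples in an abelian sense, but since the co-heart is additive Krull--Schmidt with finitely many indecomposables, its objects still split as finite direct sums of these, so the same sandwich template goes through with $\check{m}_t^\mathfrak{b}$ replacing $\hat{m}_t^\mathfrak{a}$ and the indecomposable summands of $M$ playing the role of the simples.

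The main obstacle I anticipate is the triangle-subadditivity of the new masses, which is what ultimately controls $\delta_t(G,\cdot)$ from above by $\hat{m}_t^\mathfrak{a}$ (resp.\ $\check{m}_t^\mathfrak{b}$). In the t-structure case this reduces to the long exact cohomology sequence in the abelian heart together with $K$-theoretic additivity on composition factors. In the co-t-structure case it is more delicate, since the co-heart is merely additive: rather than a long exact sequence one must invoke the octahedral axiom to refine any triangle $A\to B\to C\to A[1]$ compatibly with the co-truncations and then appeal to additivity of the indecomposable basis of the co-heart. Once this subadditivity is in hand for both structures, both inequalities $h_t(\Phi)\le\hat{h}_t^\mathfrak{a}(\Phi)$ and $h_t(\Phi)\ge\hat{h}_t^\mathfrak{a}(\Phi)$, and likewise for $\check{h}_t^\mathfrak{b}$, follow from the definitions by taking logarithmic growth rates along $\Phi^n G$.
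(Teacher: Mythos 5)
Your proposal follows essentially the same strategy as the paper: pick the canonical split-generator attached to the (co-)heart, establish a two-sided estimate between $\delta_t(G,-)$ and the (co-)t-structure mass with controlled constants, and conclude by a sandwich argument. The three ingredients you identify — the filtration-by-simples (resp.\ by indecomposables) argument giving $\delta_t(G,X)\le\hat{\delta}_t^\mathfrak{a}(X)$, the triangle subadditivity of the mass, and the resulting bound $\hat{\delta}_t^\mathfrak{a}(X)\le\hat{\delta}_t^\mathfrak{a}(G)\,\delta_t(G,X)$ — are exactly the paper's Lemma 3.13, Proposition 3.10 and Proposition 3.12 (and their co-t analogues). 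One small mischaracterization: you expect the co-t-structure subadditivity to be the more delicate of the two, but in fact it is the easier case, because the index condition $k''_i\ge k''_{i+1}$ always forces the relevant morphism $E_{i+1}\to\Sigma^{k''_i-k''_{i+1}+1}E_i$ to vanish, so adjacent factors can always be split and reordered; the genuinely delicate step is the $k''_i-k''_{i+1}=-1$ case on the t-structure side, where one must pass to the long exact cohomology sequence and compare lengths. Also note that $\check{\delta}_t^\mathfrak{b}$ must be defined as an infimum over all co-t-filtrations (there is no canonical one, unlike the t-structure side), so the subadditivity proof there needs an extra $\varepsilon$-approximation step, but this is routine.
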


One of the advantages of the new invariants $\hat{h}_t^\mathfrak{a}(\Phi)$ and $\check{h}_t^\mathfrak{b}(\Phi)$ is that they are easier to compute than the categorical entropy.
More importantly, they allow us to compute the categorical entropy via a linear algebraic computation as our second main theorem states.
It states that if there is an exact functor $\Gamma : \mathcal{D} \to D^b(\mathbf{K})$ satisfying a certain condition then $\hat{h}_t^\mathfrak{a}(\Phi)$ and $\check{h}_t^\mathfrak{b}(\Phi)$ (and hence also $h_t(\Phi)$ by Theorem \ref{main-1}) can be computed as the exponential growth rate of the dimensions of a sequence of some vector spaces.

\begin{thm}[see Theorems \ref{t-thm-2} and \ref{c-thm-2}]\label{main-2}
Let $\mathcal{D}$ be a $\mathbf{K}$-linear idempotent complete triangulated category, $\Phi : \mathcal{D} \to \mathcal{D}$ be an exact endofunctor and $\Gamma : \mathcal{D} \to D^b(\mathbf{K})$ (resp. $\Gamma : \mathcal{D}^\mathrm{op} \to D^b(\mathbf{K})$) be an exact functor.
\begin{enumerate}
\item Let $\mathfrak{a}$ be a bounded t-structure on $\mathcal{D}$ with finite heart.
Assume that $\Gamma(E) \cong \mathbf{K}$ for every simple object $E \in \mathcal{H}_\mathfrak{a}$.
Then we have
\begin{gather*}
\hat{h}_t^\mathfrak{a}(\Phi) = \lim_{N \to \infty} \frac{1}{N} \log \sum_{k \in \mathbf{Z}} \dim H^{-k}(\Gamma(\Phi^N(G))) e^{kt}\\
\left( \text{resp. } \hat{h}_t^\mathfrak{a}(\Phi) = \lim_{N \to \infty} \frac{1}{N} \log \sum_{k \in \mathbf{Z}} \dim H^{-k}(\Gamma(\Phi^N(G))) e^{-kt} \right)
\end{gather*}
for any split-generator $G$ of $\mathcal{D}$.
\item Let $\mathfrak{b}$ be a bounded co-t-structure on $\mathcal{D}$ with finite co-heart.
Assume that $\Gamma(E) \cong \mathbf{K}$ for every indecomposable object $E \in \mathcal{C}_\mathfrak{b}$ and, for any indecomposable objects $E,F \in \mathcal{C}_\mathfrak{b}$, a morphism $f \in \mathrm{Hom}(E,F)$ is an isomorphism whenever $\Gamma(f) \neq 0$.
Then we have
\begin{gather*}
\check{h}_t^\mathfrak{b}(\Phi) = \lim_{N \to \infty} \frac{1}{N} \log \sum_{k \in \mathbf{Z}} \dim H^{-k}(\Gamma(\Phi^N(G))) e^{kt}\\
\left( \text{resp. } \check{h}_t^\mathfrak{b}(\Phi) = \lim_{N \to \infty} \frac{1}{N} \log \sum_{k \in \mathbf{Z}} \dim H^{-k}(\Gamma(\Phi^N(G))) e^{-kt} \right)
\end{gather*}
for any split-generator $G$ of $\mathcal{D}$.
\end{enumerate}
\end{thm}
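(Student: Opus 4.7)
The plan is to prove both parts by showing the pointwise identity
\[
\mu_t(X) := \sum_k \dim H^{-k}(\Gamma(X)) e^{kt} = m_t(X),
\]
where $m_t$ denotes the natural (co-)t-mass whose growth rate on $\Phi^N(G)$ defines $\hat{h}_t^\mathfrak{a}(\Phi)$ (resp.\ $\check{h}_t^\mathfrak{b}(\Phi)$); taking $X = \Phi^N(G)$ and invoking Theorem \ref{main-1} then finishes each part. The contravariant cases (with $e^{-kt}$) follow by applying the same argument to $\Gamma : \mathcal{D}^\mathrm{op} \to D^b(\mathbf{K})$ with the reversed sign convention for cohomological shifts.

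For part (1), I first upgrade the pointwise hypothesis $\Gamma(E) \cong \mathbf{K}$ for simple $E \in \mathcal{H}_\mathfrak{a}$ to the assertion that $\Gamma(A) \cong \mathbf{K}^{\oplus \ell(A)}$, concentrated in cohomological degree zero, for every $A \in \mathcal{H}_\mathfrak{a}$. This is an induction on composition length: given a short exact sequence $0 \to A' \to A \to E \to 0$ in $\mathcal{H}_\mathfrak{a}$ with $E$ simple, applying the exact $\Gamma$ yields a triangle in $D^b(\mathbf{K})$ whose outer terms are in degree zero by induction, and the vanishing $\mathrm{Ext}^1_{D^b(\mathbf{K})}(\mathbf{K}, \mathbf{K}) = 0$ forces it to split. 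For general $X \in \mathcal{D}$, the bounded t-structure provides a finite tower of truncation triangles $\tau_{\leq n-1} X \to \tau_{\leq n} X \to H^n_\mathfrak{a}(X)[-n]$. A second induction on $n$, using the long exact sequences obtained from applying $\Gamma$ together with the first step, establishes that $\dim H^m(\Gamma(\tau_{\leq n} X)) = \ell(H^m_\mathfrak{a}(X))$ for $m \leq n$ and vanishes for $m > n$. The key point is that the only potentially nonzero connecting map at each stage, $\mathbf{K}^{\oplus \ell(H^n_\mathfrak{a}(X))} \to H^{n+1}(\Gamma(\tau_{\leq n-1} X))$, has target zero by the induction hypothesis, so cohomology adds exactly. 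Passing to the top of the tower gives $\dim H^n(\Gamma(X)) = \ell(H^n_\mathfrak{a}(X))$ for all $n$, hence $\mu_t(X) = \sum_k \ell(H^{-k}_\mathfrak{a}(X)) e^{kt} = m_t^\mathfrak{a}(X)$; Theorem \ref{main-1}(1) completes part (1).

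Part (2) follows the same pattern, with indecomposables of the Krull--Schmidt additive category $\mathcal{C}_\mathfrak{b}$ playing the role of simples: the hypothesis $\Gamma(E) \cong \mathbf{K}$ on indecomposables gives $\Gamma(Y) \cong \mathbf{K}^{\oplus r(Y)}$ in degree zero for $Y \in \mathcal{C}_\mathfrak{b}$ of Krull--Schmidt length $r(Y)$, and a bounded co-t-structure supplies finite truncation towers with graded pieces shifts of objects of $\mathcal{C}_\mathfrak{b}$. The same inductive argument, run against the co-t-structure filtration, then yields $\mu_t(X) = m_t^\mathfrak{b}(X)$. The main obstacle, and the reason the hypothesis on $\Gamma$ must be strengthened in this case, is that the graded pieces of a co-t-structure truncation tower are not canonically attached to $X$, so the Krull--Schmidt multiplicities used to build $m_t^\mathfrak{b}$ are a priori not invariants of $X$. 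This is precisely what the second hypothesis addresses: requiring that any morphism $f$ between indecomposables of $\mathcal{C}_\mathfrak{b}$ with $\Gamma(f) \neq 0$ be an isomorphism serves as a Schur-lemma substitute, guaranteeing that $m_t^\mathfrak{b}$ is a well-defined invariant of $X$ and that it matches the dimensions extracted by $\mu_t$, thereby closing the comparison.
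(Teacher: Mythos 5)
Part (1) is essentially correct and mirrors the paper's argument: the paper refines a t-filtration of $\Phi^N(G)$ into one with simple graded pieces and observes that, after applying $\Gamma$, the strictly decreasing shifts $k_1 > \cdots > k_m$ force the cohomology to add up exactly, which is precisely the content of your long-exact-sequence induction over the truncation tower. Your initial step of bootstrapping $\Gamma(E)\cong\mathbf{K}$ on simples to $\Gamma(A)\cong\mathbf{K}^{\ell(A)}$ on all of $\mathcal{H}_\mathfrak{a}$ via $\mathrm{Ext}^1_{D^b(\mathbf{K})}(\mathbf{K},\mathbf{K})=0$ is fine.

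Part (2) has a genuine gap. You assert that ``the same inductive argument, run against the co-t-structure filtration,'' yields the pointwise identity, but it does not. In a co-t-filtration the shifts $k_1 < \cdots < k_m$ are strictly increasing, and adjacent ones can differ by exactly $1$; after applying $\Gamma$, the relevant connecting map from the piece $\Sigma^{k_{i+1}}\mathbf{K}^{\sigma_{i+1}}$ (concentrated in degree $-k_{i+1}$) lands in $\Sigma\Gamma(A_i)$, whose cohomology sits in degrees $-k_1-1,\dots,-k_i-1$. When $k_{i+1}=k_i+1$ the target has cohomology in degree $-k_{i+1}$, so the connecting map need not vanish and the cohomology does not add. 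Remark~\ref{t-vs-c} gives the extreme example: the zero object has a co-t-filtration with pieces $\Sigma^{-1}E$ and $E$, yet $\Gamma(0)=0$. Your appeal to the Schur-type hypothesis on $\Gamma$ as ``guaranteeing that $m_t^\mathfrak{b}$ is well-defined and matches'' names the right ingredient but does not supply the mechanism. What is actually needed --- and what the paper proves in Lemma~\ref{acyc-lem} via two applications of the octahedral axiom --- is a cancellation procedure: whenever the induced map on $\Gamma$-images between adjacent graded pieces is nonzero, the Schur hypothesis locates a common indecomposable summand $M_i$ in $E_j$ and $E_{j+1}$ on which the composite $\Sigma g_j\circ f_{j+1}$ is an isomorphism, and one replaces the co-t-filtration by one in which both pieces drop this $M_i$, strictly decreasing $\sum\sigma_i$. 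Iterating until all $\Gamma$-level connecting maps vanish, one only then gets $\check{\delta}_t^\mathfrak{b}(\Phi^N(G)) \leq \sum_k \dim H^{-k}(\Gamma(\Phi^N(G)))e^{kt}$, which combines with the elementary reverse inequality (from Lemma~\ref{lin-lem}, since any co-t-filtration pushed through $\Gamma$ is a filtration of $\Gamma(\Phi^N(G))$ by $\Sigma^k\mathbf{K}$'s) to close the argument. Without spelling out this reduction your proof of part~(2) is incomplete.

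A minor additional remark: you phrase the non-uniqueness issue as $m_t^\mathfrak{b}$ being ``a priori not an invariant of $X$,'' but $\check{\delta}_t^\mathfrak{b}(X)$ is defined as an infimum over co-t-filtrations and hence is always an invariant; the real problem is showing that the infimum is actually computed by the cohomology of $\Gamma(X)$, which is exactly where the cancellation lemma is needed.
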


\subsection{Applications}

Let $\mathcal{D}$ be a $\mathbf{K}$-linear idempotent complete triangulated category.
Using Theorems \ref{main-1} and \ref{main-2}, we give an answer to Question \ref{ques} for an {\em ST-triple} $(\mathcal{B},\mathcal{A},M)$ inside $\mathcal{D}$ whose notion was introduced by Adachi--Mizuno--Yang \cite{AMY}.
Roughly speaking, it is a triple of a thick subcategory $\mathcal{A} \subset \mathcal{D}$ admitting a bounded t-structure $\mathfrak{a}$ with finite heart, a thick subcategory $\mathcal{B} \subset \mathcal{D}$ admitting a bounded co-t-structure $\mathfrak{b}$ with finite co-heart and a silting object $M \in \mathcal{B}$ which are closely related to each other (see Definition \ref{st-triple} and Example \ref{st-ex}).
In particular, if we have a categorical dynamical system $(\mathcal{D},\Phi)$ such that $\Phi(\mathcal{A}) \subset \mathcal{A}$ and $\Phi(\mathcal{B}) \subset \mathcal{B}$, we can apply Theorems \ref{main-1} (1) and \ref{main-2} (1) to $(\mathcal{A},\Phi|_\mathcal{A})$ and Theorems \ref{main-1} (2) and \ref{main-2} (2) to $(\mathcal{B},\Phi|_\mathcal{B})$.

Now consider an ST-triple $(\mathcal{B},\mathcal{A},M)$ inside $\mathcal{D}$.
Let $S_1,\dots,S_n$ be a complete set of pairwise non-isomorphic simple objects of the heart of the t-structure $\mathfrak{a}$ on $\mathcal{A}$ and $S \coloneqq S_1 \oplus \cdots \oplus S_n$.
Note that $S$ is a split-generator of $\mathcal{A}$ and $M$ is a split-generator of $\mathcal{B}$.
Applying Theorems \ref{main-1} (1) and \ref{main-2} (1) to $\Phi|_\mathcal{A}$ and $\Gamma_\mathcal{A} \coloneqq \mathrm{RHom}(M,-) : \mathcal{A} \to D^b(\mathbf{K})$ and Theorems \ref{main-1} (2) and \ref{main-2} (2) to $\Phi|_\mathcal{B}$ and $\Gamma_\mathcal{B} \coloneqq \mathrm{RHom}(-,S) : \mathcal{B} \to D^b(\mathbf{K})^\mathrm{op}$, we obtain the following.

\begin{thm}[see Theorem \ref{app-thm}]
Let $\mathcal{D}$ be a $\mathbf{K}$-linear idempotent complete algebraic triangulated category, $\Phi : \mathcal{D} \to \mathcal{D}$ be an exact endofunctor and $(\mathcal{B},\mathcal{A},M)$ be an ST-triple inside $\mathcal{D}$ such that $M$ is basic.
Assume that $\Phi(\mathcal{A}) \subset \mathcal{A}$ and $\Phi(\mathcal{B}) \subset \mathcal{B}$.
Then we have
\begin{gather*}
h_t(\Phi|_\mathcal{A}) = \lim_{N \to \infty} \frac{1}{N} \log \sum_{k \in \mathbf{Z}} \dim \mathrm{Hom}^{-k}(M,\Phi^N(G_\mathcal{A})) e^{kt},\\
h_t(\Phi|_\mathcal{B}) = \lim_{N \to \infty} \frac{1}{N} \log \sum_{k \in \mathbf{Z}} \dim \mathrm{Hom}^{-k}(\Phi^N(G_\mathcal{B}),S) e^{-kt}
\end{gather*}
for any split-generators $G_\mathcal{A} \in \mathcal{A}$ and $G_\mathcal{B} \in \mathcal{B}$.
In particular, if $\Phi$ is an exact autoequivalence then
\begin{equation*}
h_t(\Phi|_\mathcal{A}) = h_{-t}(\Phi|_\mathcal{B}^{-1}) = \lim_{N \to \infty} \frac{1}{N} \log \sum_{k \in \mathbf{Z}} \dim \mathrm{Hom}^{-k}(M,\Phi^N(S)) e^{kt}.
\end{equation*}
\end{thm}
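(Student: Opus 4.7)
The plan is to apply Theorems~\ref{main-1} and \ref{main-2} to each of the restricted systems $(\mathcal{A},\Phi|_\mathcal{A})$ and $(\mathcal{B},\Phi|_\mathcal{B})$ separately and then combine them via adjunction for the final identity. Theorem~\ref{main-1}~(1), applied to the bounded t-structure $\mathfrak{a}$ on $\mathcal{A}$ with finite heart, yields $h_t(\Phi|_\mathcal{A}) = \hat{h}_t^\mathfrak{a}(\Phi|_\mathcal{A})$, and symmetrically Theorem~\ref{main-1}~(2) yields $h_t(\Phi|_\mathcal{B}) = \check{h}_t^\mathfrak{b}(\Phi|_\mathcal{B})$. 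The two displayed formulas will then follow from Theorem~\ref{main-2} once I verify that $\Gamma_\mathcal{A} = \mathrm{RHom}(M,-)$ and $\Gamma_\mathcal{B} = \mathrm{RHom}(-,S)$ satisfy its hypotheses; the fact that $\Gamma_\mathcal{A}$ is covariant and $\Gamma_\mathcal{B}$ is contravariant is precisely what accounts for the sign difference between $e^{kt}$ and $e^{-kt}$ in the two statements.

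The essential content is the verification of these hypotheses, which rests on the orthogonality built into an ST-triple. Since $M$ is basic, I decompose $M = M_1 \oplus \cdots \oplus M_n$ into pairwise non-isomorphic indecomposables. The defining property of an ST-triple (see \cite{AMY}) produces a perfect pairing $\mathrm{RHom}(M_i,S_j) \cong \delta_{ij}\mathbf{K}$. For Theorem~\ref{main-2}~(1) this immediately gives $\Gamma_\mathcal{A}(S_j) \cong \mathbf{K}$ for every simple object $S_j$ of $\mathcal{H}_\mathfrak{a}$. For Theorem~\ref{main-2}~(2), I use the standard fact that the indecomposable objects of the co-heart $\mathcal{C}_\mathfrak{b}$ of a basic silting object $M$ are exactly its direct summands $M_i$; the same pairing then gives $\Gamma_\mathcal{B}(M_i) \cong \mathbf{K}$. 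The morphism condition in Theorem~\ref{main-2}~(2) is then automatic: for $i \neq j$, the one-dimensional spaces $\mathrm{Hom}(M_i,S)$ and $\mathrm{Hom}(M_j,S)$ are supported on disjoint components of $S$, so any $f : M_i \to M_j$ induces the zero map under $\mathrm{RHom}(-,S)$; and for $i = j$, a non-isomorphism in the local ring $\mathrm{End}(M_i)$ lies in its Jacobson radical and therefore acts as zero on the simple $\mathrm{End}(M_i)$-module $\mathrm{Hom}(M_i,S_i) \cong \mathbf{K}$.

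For the final identity in the autoequivalence case, I pick the split-generators $G_\mathcal{A} = S$ of $\mathcal{A}$ and $G_\mathcal{B} = M$ of $\mathcal{B}$. Applying the formula for $h_t(\Phi|_\mathcal{B})$ to $\Phi^{-1}$ with $t$ replaced by $-t$ yields
\begin{equation*}
h_{-t}(\Phi|_\mathcal{B}^{-1}) = \lim_{N \to \infty} \frac{1}{N} \log \sum_{k \in \mathbf{Z}} \dim \mathrm{Hom}^{-k}(\Phi^{-N}(M),S) e^{kt},
\end{equation*}
and the adjunction $\mathrm{Hom}^{-k}(\Phi^{-N}(M),S) \cong \mathrm{Hom}^{-k}(M,\Phi^N(S))$ available because $\Phi$ is an autoequivalence identifies the right-hand side with the formula for $h_t(\Phi|_\mathcal{A})$ evaluated at $G_\mathcal{A} = S$, thereby proving both equalities in one stroke.

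The step requiring the most care is the sign bookkeeping around Theorem~\ref{main-2}: the covariant part produces a weight $e^{kt}$ while the contravariant part produces $e^{-kt}$, and the whole argument hinges on these being exactly the conventions under which the adjunction swapping $\Phi^{-N}$ and $\Phi^N$ also swaps $t \leftrightarrow -t$. Once the conventions are aligned, the remaining work is the routine verification above, entirely fuelled by the orthogonality intrinsic to an ST-triple.
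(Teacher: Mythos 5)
Your proposal is correct and follows essentially the same route as the paper: reduce to Theorems~\ref{t-thm-1}, \ref{t-thm-2}, \ref{c-thm-1}, \ref{c-thm-2} via $\Gamma_\mathcal{A}=\mathrm{RHom}(M,-)$ and $\Gamma_\mathcal{B}=\mathrm{RHom}(-,S)$, verify the hypotheses using Proposition~\ref{amy-prop} (your ``perfect pairing''), and derive the autoequivalence identity by the substitution $G_\mathcal{A}=S$, $G_\mathcal{B}=M$. The only cosmetic deviation is that for $i=j$ you invoke the Jacobson radical annihilating the simple $\mathrm{End}(M_i)$-module $\mathrm{Hom}(M_i,S_i)$, whereas the paper argues that a non-invertible endomorphism of $M_i$ is nilpotent and hence maps to zero in $\mathrm{Hom}(\mathbf{K},\mathbf{K})$; these are equivalent since $\mathrm{End}(M_i)$ is finite-dimensional local.
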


\begin{rmk}
We expect that $h_{-t}^\mathcal{B}(\Phi|_\mathcal{B}^{-1}) = h_t^\mathcal{B}(\Phi|_\mathcal{B})$ holds maybe under some mild assumption.
Indeed this is the case if $\mathcal{B}$ is a saturated triangulated category (see \cite[Lemma 2.11]{FFO}).
\end{rmk}

\begin{ack}
This work was supported by the Institute for Basic Science (IBS-R003-D1).
\end{ack}

\section{Preliminaries}

\subsection{Notations and terminologies}

In this paper, $\mathbf{K}$ always denotes an algebraically closed field.
In particular, the algebraically closedness will be used in the proof of Theorem \ref{app-thm} (see Remark \ref{alg-cl}).

Let $\mathcal{D}$ be a $\mathbf{K}$-linear triangulated category.
The shift functor of $\mathcal{D}$ will be denoted by $\Sigma_\mathcal{D}$ or simply by $\Sigma$ if $\mathcal{D}$ is clear from the context.
The opposite category of $\mathcal{D}$ will be denoted by $\mathcal{D}^\mathrm{op}$.
The opposite category $\mathcal{D}^\mathrm{op}$ has a natural structure of a triangulated category with the shift functor $\Sigma_{\mathcal{D}^\mathrm{op}} = \Sigma_\mathcal{D}^{-1}$.
For an exact endofunctor $\Phi : \mathcal{D} \to \mathcal{D}$, its opposite functor will be denoted by $\Phi^\mathrm{op} : \mathcal{D}^\mathrm{op} \to \mathcal{D}^\mathrm{op}$.

By the notation $E \in \mathcal{D}$, we mean that $E$ is an object of $\mathcal{D}$.
For $E,F \in \mathcal{D}$, the $\mathbf{K}$-vector space of morphisms from $E$ to $F$ will be denoted by $\mathrm{Hom}_\mathcal{D}(E,F)$ or simply by $\mathrm{Hom}(E,F)$ if $\mathcal{D}$ is clear from the context.
We sometimes use the notation $\mathrm{Hom}_\mathcal{D}^k(E,F) \coloneqq \mathrm{Hom}_\mathcal{D}(E,\Sigma^kF)$.
We say $\mathcal{D}$ is {\em Hom-finite} if $\mathrm{Hom}_\mathcal{D}(E,F)$ is finite-dimensional for every $E,F \in \mathcal{D}$.

An exact triangle $D \to E \to F \to \Sigma D$ in $\mathcal{D}$ will often be depicted as
\begin{equation*}
\begin{tikzcd}[column sep=tiny]
D \ar[rr] & & E \ar[dl]\\
& F \ar[ul,dashed] &
\end{tikzcd}.
\end{equation*}
For $E \in \mathcal{D}$, we call a diagram in $\mathcal{D}$ of the form
\begin{equation*}
\begin{tikzcd}[column sep=tiny]
0 \ar[rr] & & * \ar[dl] \ar[rr] & & * \ar[dl] & \cdots & * \ar[rr] & & E \ar[dl]\\
& E_1 \ar[ul,dashed] & & E_2 \ar[ul,dashed] & & & & E_m \ar[ul,dashed] &
\end{tikzcd}
\end{equation*}
a {\em filtration} of $E$.
Here $*$ in the diagram denotes some object of $\mathcal{D}$ and each $*$ needs not be isomorphic to each other.

A full subcategory $\mathcal{C} \subset \mathcal{D}$ is called a {\em thick subcategory} if it is a triangulated subcategory which is closed under taking direct summands.
For $E \in \mathcal{D}$, we denote by $\mathrm{thick}(E)$ the smallest thick subcategory containing $E$, and by $\mathrm{add}(E)$ the smallest full additive subcategory containing $E$ and closed under taking direct summands.

Now let $\mathcal{A}$ be an additive category.
An idempotent morphism $e \in \mathrm{Hom}_\mathcal{A}(E,E)$ is said to be {\em split} if there exist $f \in \mathrm{Hom}_\mathcal{A}(E,F)$ and $g \in \mathrm{Hom}_\mathcal{A}(F,E)$ such that $g \circ f = e$ and $f \circ g = \mathrm{id}$.
An additive category is called {\em idempotent complete} if every idempotent morphism therein splits.
In this paper, we mostly assume that our triangulated category $\mathcal{D}$ is idempotent complete.
It is because $\mathcal{D}$ should be idempotent complete in order to admit a bounded t-structure or a bounded co-t-structure with idempotent complete co-heart (see \cite[Theorem]{LC} and \cite[Theorem 2.9]{IY} respectively).

Finally, when we consider the categorical entropy of an endofunctor $\Phi : \mathcal{D} \to \mathcal{D}$, we always assume that $\mathcal{D}$ has a split-generator and $\Phi^N \not\cong 0$ for every $N>0$.

In each section, a triangulated category $\mathcal{D}$ is assumed to satisfy the following:
\begin{center}
\begin{tabular}{|c|c|}
\hline
Section 2 & $\mathbf{K}$-linear (and saturated in Theorem \ref{dhkk-thm})\\
\hline
Sections 3, 4 & $\mathbf{K}$-linear and idempotent complete\\
\hline
Section 5 & $\mathbf{K}$-linear, idempotent complete and algebraic\\
\hline
\end{tabular}
\end{center}

\subsection{Categorical entropy}

In this section, we review the definition and basic properties of categorical entropy.
The reader may refer to \cite{DHKK} in which the notion of categorical entropy was firstly introduced.

Let $\mathcal{D}$ be a $\mathbf{K}$-linear triangulated category.

\begin{dfn}[{\cite[Definition 2.1]{DHKK}}]
Let $E,F \in \mathcal{D}$.
For $t \in \mathbf{R}$, the {\em categorical complexity} of $F$ with respect to $E$ is defined by
\begin{equation*}
\delta_t^\mathcal{D}(E,F) = \inf
\left\{\sum_{i=1}^m e^{k_i t} \,\left|\,
\begin{tikzcd}[column sep=tiny]
0 \ar[rr] & & * \ar[dl] \ar[rr] & & * \ar[dl] & \cdots & * \ar[rr] & & F \oplus F' \ar[dl]\\
& \Sigma^{k_1}E \ar[ul,dashed] & & \Sigma^{k_2}E \ar[ul,dashed] & & & & \Sigma^{k_m}E \ar[ul,dashed] &
\end{tikzcd}
\text{ for some } F' \in \mathcal{D}
\right.\right\}
\end{equation*}
if $F \not\cong 0$, and $\delta_t^\mathcal{D}(E,F) = 0$ if $F \cong 0$.
Note that $\delta_t^\mathcal{D}(E,F) = \infty$ whenever $F \not\in \mathrm{thick}(E)$.
We simply write $\delta_t(E,F) \coloneqq \delta_t^\mathcal{D}(E,F)$ if $\mathcal{D}$ is clear from the context.
\end{dfn}

\begin{lem}[{\cite[Proposition 2.3]{DHKK}}]\label{dhkk-lem}
For $D,E,F \in \mathcal{D}$, the following hold:
\begin{itemize}
\item[(1)] $\delta_t(E,F) \leq \delta_t(E,D) \delta_t(D,F)$.
\item[(2)] $\delta_t(D,E \oplus F) \leq \delta_t(D,E) + \delta_t(D,F)$.
\item[(3)] $\delta_t(\Phi(E),\Phi(F)) \leq \delta_t(E,F)$ for any exact functor $\Phi : \mathcal{D} \to \mathcal{D}'$.
\end{itemize}
\end{lem}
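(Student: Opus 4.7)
All three parts follow by manipulating filtrations directly, with the only subtlety being the bookkeeping of the direct summands $F'$ permitted by the definition of $\delta_t$. I would handle the three parts in the order (3), (2), (1), since each is strictly more involved than the previous.

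For (3), given any filtration of $F \oplus F'$ with successive cones $\Sigma^{k_i}E$, apply $\Phi$. Since $\Phi$ is exact, it sends exact triangles to exact triangles and commutes with $\Sigma$, so the image is a filtration of $\Phi(F) \oplus \Phi(F')$ with successive cones $\Sigma^{k_i}\Phi(E)$. Taking infimum over filtrations gives $\delta_t^{\mathcal{D}'}(\Phi(E),\Phi(F)) \leq \delta_t^\mathcal{D}(E,F)$. For (2), given a filtration $0 = Y_0 \to Y_1 \to \cdots \to Y_m = E \oplus E'$ with cones $\Sigma^{k_i}D$ and a filtration $0 = Z_0 \to Z_1 \to \cdots \to Z_n = F \oplus F'$ with cones $\Sigma^{l_j}D$, concatenate them into
\begin{equation*}
0 = Y_0 \to Y_1 \to \cdots \to Y_m \to Y_m \oplus Z_1 \to \cdots \to Y_m \oplus Z_n = (E \oplus F) \oplus (E' \oplus F'),
\end{equation*}
where each new step $Y_m \oplus Z_{j-1} \to Y_m \oplus Z_j$ is the identity on $Y_m$ plus the structure map, and hence has cone $\Sigma^{l_j}D$. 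The total weight of this filtration is $\sum_i e^{k_i t} + \sum_j e^{l_j t}$, and taking infima over the two original filtrations yields (2).

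For (1), fix a filtration $0 = Y_0 \to Y_1 \to \cdots \to Y_m = F \oplus F'$ with cones $\Sigma^{k_i}D$, and for each $i$ fix a filtration of $D \oplus D_i'$ (for some $D_i' \in \mathcal{D}$) with cones $\Sigma^{l_{i,j}}E$ realizing a value close to $\delta_t(E,D)$. The plan is to first upgrade the $Y_\bullet$ filtration so that its cones become $\Sigma^{k_i}(D \oplus D_i')$ rather than $\Sigma^{k_i}D$, then refine each such cone into a filtration by shifts of $E$. For the first step, set $Z_i \coloneqq Y_i \oplus \bigoplus_{j \leq i} \Sigma^{k_j}D_j'$ with the obvious maps: the cone of $Z_{i-1} \to Z_i$ is $\Sigma^{k_i}D \oplus \Sigma^{k_i}D_i' = \Sigma^{k_i}(D \oplus D_i')$, so $Z_\bullet$ is a filtration of $F \oplus F''$ with $F'' \coloneqq F' \oplus \bigoplus_i \Sigma^{k_i}D_i' \in \mathcal{D}$. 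For the second step, shift the filtration of $D \oplus D_i'$ by $\Sigma^{k_i}$ to get a filtration of $\Sigma^{k_i}(D \oplus D_i')$ with cones $\Sigma^{k_i+l_{i,j}}E$ and weight $e^{k_i t} \sum_j e^{l_{i,j}t}$. Splicing these into $Z_\bullet$ produces a filtration of $F \oplus F''$ by shifts of $E$ whose weight is $\sum_i e^{k_i t} \sum_j e^{l_{i,j}t}$. Taking infima over both layers of filtrations gives $\delta_t(E,F) \leq \delta_t(E,D)\,\delta_t(D,F)$.

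The main obstacle is keeping the filtrations compatible with the $F \oplus F'$ convention in part (1); the $Z_i$ construction above is designed precisely to absorb the auxiliary summands $D_i'$ into the final object on the right, so that the result is a genuine filtration of $F$ (up to direct summand) rather than of some larger extension.
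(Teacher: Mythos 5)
Your proof is correct. The paper itself cites \cite[Proposition 2.3]{DHKK} without reproducing an argument, and the filtration-splicing you give (applying $\Phi$ to a filtration for part (3), concatenating two filtrations via $Y_m \oplus Z_\bullet$ for part (2), and in part (1) absorbing the auxiliary summands $\Sigma^{k_i}D_i'$ into the final object so as to refine each $\Sigma^{k_i}(D \oplus D_i')$-step by shifts of $E$) is exactly the standard argument from the definition of $\delta_t$; the $Z_i \coloneqq Y_i \oplus \bigoplus_{j \le i}\Sigma^{k_j}D_j'$ device correctly handles the only subtlety, namely keeping the end object of the form $F \oplus (\text{something})$.
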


Recall that an object $G \in \mathcal{D}$ is called a {\em split-generator} of $\mathcal{D}$ if $\mathcal{D} = \mathrm{thick}(G)$.
Note that if $G$ is a split-generator of $\mathcal{D}$ then $\delta_t(G,E) < \infty$ for every $E \in \mathcal{D}$.

\begin{dfn}[{\cite[Definition 2.5]{DHKK}}]\label{ent}
Let $G$ be a split-generator of $\mathcal{D}$.
The {\em categorical entropy} of an exact endofunctor $\Phi : \mathcal{D} \to \mathcal{D}$ is defined by
\begin{equation*}
h_t^\mathcal{D}(\Phi) = \lim_{N \to \infty} \frac{1}{N} \log \delta_t^\mathcal{D}(G,\Phi^N(G)).
\end{equation*}
We simply write $h_t(\Phi) \coloneqq h_t^\mathcal{D}(\Phi)$ if $\mathcal{D}$ is clear from the context.
\end{dfn}

\begin{lem}[{\cite[Lemma 2.6]{DHKK}}]
The categorical entropy is well-defined in the sense that it does not depend on the choice of a split-generator and the limit of the sequence $\left\{ \frac{1}{N} \log \delta_t(G,\Phi^N(G)) \right\}_{N=1}^\infty$ exists for any split-generator $G$ of $\mathcal{D}$.
\end{lem}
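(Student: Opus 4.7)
The plan is to prove the two assertions—existence of the limit and independence of the split-generator—separately, both using only the three properties of $\delta_t$ listed in Lemma \ref{dhkk-lem}. The first step is to show that the sequence $a_N \coloneqq \log \delta_t(G, \Phi^N(G))$ is subadditive, i.e.\ $a_{M+N} \leq a_M + a_N$. To see this, I would write $\Phi^{M+N}(G) = \Phi^M(\Phi^N(G))$ and apply Lemma \ref{dhkk-lem} (1) with the intermediate object $D = \Phi^M(G)$, getting
\begin{equation*}
\delta_t(G, \Phi^{M+N}(G)) \leq \delta_t(G, \Phi^M(G)) \cdot \delta_t(\Phi^M(G), \Phi^M(\Phi^N(G))).
\end{equation*}
Then Lemma \ref{dhkk-lem} (3), applied to $\Phi^M$, bounds the second factor by $\delta_t(G, \Phi^N(G))$, which yields the desired inequality after taking logarithms.

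Once subadditivity is established, existence of the limit follows from Fekete's lemma, giving $\lim_{N \to \infty} a_N/N = \inf_N a_N/N \in [-\infty, \infty)$. To rule out the value $-\infty$, I would observe that $\delta_t(G, \Phi^N(G)) > 0$ for all $N$: if $\Phi^N(G) \cong 0$, then since every $E \in \mathcal{D}$ sits in an iterated extension of shifts of $G$ (up to summands), applying $\Phi^N$ to such a filtration shows $\Phi^N(E) \cong 0$ for all $E$, contradicting the standing hypothesis that $\Phi^N \not\cong 0$.

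For independence of the split-generator, given two split-generators $G$ and $G'$, both $\delta_t(G, G')$ and $\delta_t(G', G)$ are finite. Applying Lemma \ref{dhkk-lem} (1) twice and Lemma \ref{dhkk-lem} (3), I would estimate
\begin{equation*}
\delta_t(G', \Phi^N(G')) \leq \delta_t(G', G) \cdot \delta_t(G, \Phi^N(G)) \cdot \delta_t(\Phi^N(G), \Phi^N(G')) \leq \delta_t(G', G) \cdot \delta_t(G, \Phi^N(G)) \cdot \delta_t(G, G').
\end{equation*}
Taking $\frac{1}{N} \log$ and letting $N \to \infty$, the two constant factors $\delta_t(G', G)$ and $\delta_t(G, G')$ contribute nothing, so the limit for $G'$ is at most the limit for $G$. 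Symmetry yields the reverse inequality and hence equality.

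No step here is a serious obstacle, since everything reduces to the three formal properties of $\delta_t$; the only subtle point worth flagging is the positivity argument that prevents the Fekete limit from degenerating to $-\infty$, which crucially uses that $G$ split-generates $\mathcal{D}$ together with the assumption $\Phi^N \not\cong 0$.
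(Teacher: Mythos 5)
The paper does not reproduce a proof of this lemma; it is cited directly from \cite[Lemma 2.6]{DHKK}, and your argument is precisely the standard one given there: submultiplicativity of $\delta_t(G,\Phi^N(G))$ via parts (1) and (3) of Lemma \ref{dhkk-lem} followed by Fekete's lemma, and independence of the generator via the two-sided sandwich $\delta_t(G',\Phi^N(G')) \leq \delta_t(G',G)\,\delta_t(G,G')\,\delta_t(G,\Phi^N(G))$ and its mirror. One small caveat on the positivity step: your argument shows only that $\Phi^N(G)\not\cong 0$, which by the definitional convention means $\delta_t(G,\Phi^N(G))$ is not \emph{set} to zero, but it does not by itself show the infimum in the definition is strictly positive; this gap is harmless here, since Fekete's lemma applies to subadditive sequences with values in $\mathbf{R}\cup\{-\infty\}$ and the limit still exists in $[-\infty,\infty)$, which is all the lemma asserts.
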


Let $\Phi : \mathcal{D} \to \mathcal{D}$ be an exact endofunctor.
The categorical entropy of the opposite functor $\Phi^\mathrm{op} : \mathcal{D}^\mathrm{op} \to \mathcal{D}^\mathrm{op}$ can be computed from that of the original functor as follows.

\begin{lem}\label{op-ent}
For an exact endofunctor $\Phi : \mathcal{D} \to \mathcal{D}$, we have
\begin{equation*}
h_t^{\mathcal{D}^\mathrm{op}}(\Phi^\mathrm{op}) = h_{-t}^\mathcal{D}(\Phi).
\end{equation*}
\end{lem}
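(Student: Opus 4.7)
The plan is to reduce everything to a statement about complexities, namely
\[
\delta_t^{\mathcal{D}}(E,F) \;=\; \delta_{-t}^{\mathcal{D}^{\mathrm{op}}}(E,F) \qquad \text{for all } E,F \in \mathcal{D},
\]
after which the lemma follows immediately from the definition of $h_t$, since $(\Phi^{\mathrm{op}})^N(G) = \Phi^N(G)$ on objects and any split-generator of $\mathcal{D}$ is also a split-generator of $\mathcal{D}^{\mathrm{op}}$.

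To prove the displayed identity, I would produce, from any filtration of $F \oplus F'$ in $\mathcal{D}$ with factors $\Sigma_\mathcal{D}^{k_1}E,\ldots,\Sigma_\mathcal{D}^{k_m}E$, a filtration of $F \oplus F'$ in $\mathcal{D}^{\mathrm{op}}$ whose factors are $\Sigma_{\mathcal{D}^{\mathrm{op}}}^{-k_{m}}E,\ldots,\Sigma_{\mathcal{D}^{\mathrm{op}}}^{-k_1}E$. Concretely, starting with intermediates $0 = X_0 \to X_1 \to \cdots \to X_m = F \oplus F'$ in $\mathcal{D}$, set $W_j = \mathrm{cone}(X_j \to X_m)$; by iterated octahedral axiom applied to the compositions $X_{j-1} \to X_j \to X_m$, one obtains exact triangles
\[
\Sigma_\mathcal{D}^{k_j}E \longrightarrow W_{j-1} \longrightarrow W_j \longrightarrow \Sigma_\mathcal{D}^{k_j+1}E
\]
in $\mathcal{D}$, with $W_0 = F \oplus F'$ and $W_m = 0$. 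Relabelling $Y_i \coloneqq W_{m-i}$, the family $\{Y_i\}$ gives a co-filtration of $F \oplus F'$ in $\mathcal{D}$. Viewing everything in $\mathcal{D}^{\mathrm{op}}$ reverses all arrows and replaces $\Sigma_\mathcal{D}$ by $\Sigma_{\mathcal{D}^{\mathrm{op}}}^{-1}$, so after one rotation each of the triangles above becomes
\[
Y_{i-1} \longrightarrow Y_i \longrightarrow \Sigma_{\mathcal{D}^{\mathrm{op}}}^{-k_{m-i+1}}E \longrightarrow \Sigma_{\mathcal{D}^{\mathrm{op}}} Y_{i-1}
\]
in $\mathcal{D}^{\mathrm{op}}$, exhibiting $\{Y_i\}$ as a genuine filtration of $F \oplus F'$ in $\mathcal{D}^{\mathrm{op}}$ whose factors are the claimed shifts of $E$. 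Evaluating its cost in $\mathcal{D}^{\mathrm{op}}$ at parameter $-t$ gives $\sum_i e^{-k_{m-i+1}(-t)} = \sum_j e^{k_j t}$, which matches the cost of the original filtration in $\mathcal{D}$ at parameter $t$. Taking the infimum yields $\delta_{-t}^{\mathcal{D}^{\mathrm{op}}}(E, F \oplus F') \leq \delta_t^{\mathcal{D}}(E, F \oplus F')$, and this descends to complexity of $F$ since enlarging $F'$ only enlarges the class of admissible filtrations. The construction is manifestly symmetric: $(\mathcal{D}^{\mathrm{op}})^{\mathrm{op}} = \mathcal{D}$, so the reverse inequality is obtained by applying the same argument in $\mathcal{D}^{\mathrm{op}}$.

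Finally, substituting the resulting equality into the definition of the categorical entropy gives
\[
h_t^{\mathcal{D}^{\mathrm{op}}}(\Phi^{\mathrm{op}}) \;=\; \lim_{N \to \infty} \frac{1}{N} \log \delta_t^{\mathcal{D}^{\mathrm{op}}}\bigl(G, \Phi^N(G)\bigr) \;=\; \lim_{N \to \infty} \frac{1}{N} \log \delta_{-t}^{\mathcal{D}}\bigl(G, \Phi^N(G)\bigr) \;=\; h_{-t}^{\mathcal{D}}(\Phi).
\]
The only genuinely delicate point is the bookkeeping in the octahedral construction — making sure the reversal of filtration order together with the sign flip $\Sigma_\mathcal{D}^k \leftrightarrow \Sigma_{\mathcal{D}^{\mathrm{op}}}^{-k}$ matches up, and that the summand $F'$ poses no issue; everything else is formal.
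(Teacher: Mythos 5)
Your proof is correct and takes essentially the same route as the paper: the paper proves the identity $\delta_t^{\mathcal{D}^{\mathrm{op}}}(E,F) = \delta_{-t}^{\mathcal{D}}(E,F)$ by invoking Lemma~\ref{op-lem}, whose content is exactly your octahedral ``co-filtration'' construction $W_j = \mathrm{cone}(X_j \to X_m)$ converting a filtration of $F$ in $\mathcal{D}$ into one of $F$ in $\mathcal{D}^{\mathrm{op}}$ with the shifts negated and reversed; the passage from $\delta$ to $h_t$ is the same formal observation about split-generators and objectwise equality $(\Phi^{\mathrm{op}})^N(G) = \Phi^N(G)$ in both.
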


The following can be easily checked using the octahedral axiom.

\begin{lem}\label{op-lem}
Suppose we have a filtration
\begin{equation*}
\begin{tikzcd}[column sep=tiny]
0 \ar[rr] & & A_1 \ar[dl] \ar[rr,"f_1"] & & A_2 \ar[dl] & \cdots & A_{m-1} \ar[rr,"f_{m-1}"] & & F \ar[dl]\\
& E_1 \ar[ul,dashed] & & E_2 \ar[ul,dashed] & & & & E_m \ar[ul,dashed] &
\end{tikzcd}.
\end{equation*}
Then there exists a diagram
\begin{equation*}
\begin{tikzcd}[column sep=tiny]
F \ar[rr] & & B_1 \ar[dl,dashed] \ar[rr] & & B_2 \ar[dl,dashed] & \cdots & B_{m-1} \ar[rr] & & 0 \ar[dl,dashed]\\
& E_1 \ar[ul] & & E_2 \ar[ul] & & & & E_m \ar[ul] &
\end{tikzcd}
\end{equation*}
consisting of exact triangles where $B_i$ fits into the exact triangle $A_i \overset{f_{m-1} \circ \cdots \circ f_i}{\longrightarrow} F \to B_i \to \Sigma A_i$.
\end{lem}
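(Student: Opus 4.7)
The plan is to build each $B_i$ directly as a cone and then to read off the desired triangles from a single application of the octahedral axiom. For each $i = 0, 1, \ldots, m$, set $g_i \coloneqq f_{m-1} \circ f_{m-2} \circ \cdots \circ f_i : A_i \to F$, with the conventions $A_0 = 0$ (so $g_0$ is the zero map) and $g_m = \mathrm{id}_F$, and define $B_i$ by completing $g_i$ to an exact triangle
\begin{equation*}
A_i \xrightarrow{g_i} F \to B_i \to \Sigma A_i.
\end{equation*}
This immediately realizes the condition on $B_i$ stated in the lemma, and forces $B_0 \cong F$ and $B_m \cong 0$.

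Next, I apply the octahedral axiom to the factorization $g_{i-1} = g_i \circ f_{i-1}$ of the composition $A_{i-1} \xrightarrow{f_{i-1}} A_i \xrightarrow{g_i} F$. The three input triangles are $A_{i-1} \xrightarrow{f_{i-1}} A_i \to E_i \to \Sigma A_{i-1}$, which comes from the given filtration, together with the two defining triangles of $B_{i-1}$ and $B_i$. The octahedral axiom then produces a fourth exact triangle
\begin{equation*}
E_i \to B_{i-1} \to B_i \to \Sigma E_i,
\end{equation*}
which, once one translates between the dashed/solid conventions, is precisely the $i$-th local triangle encoded in the target diagram. Running $i$ from $1$ to $m$ assembles the entire second diagram.

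The only thing requiring care is checking that the orientation of the triangles in the target diagram matches the octahedral output; this is really the step I expect to be most error-prone. At $i = 1$ one reads off $E_1 \to F \to B_1 \to \Sigma E_1$ (using $B_0 = F$), and at $i = m$ one reads off $E_m \to B_{m-1} \to 0$, which forces $B_{m-1} \cong E_m$ in agreement with $B_{m-1}$ being the cone of $A_{m-1} \xrightarrow{f_{m-1}} F$ as in the last triangle of the given filtration. Beyond this bookkeeping no genuine obstacle arises, and the whole lemma reduces to one invocation of the octahedral axiom at each step.
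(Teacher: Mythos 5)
Your proof is correct and takes exactly the approach the paper intends: the paper dispatches this lemma with the one-line remark that it ``can be easily checked using the octahedral axiom,'' and you have supplied precisely that check, defining $B_i$ as the cone of $g_i = f_{m-1}\circ\cdots\circ f_i$ and applying the octahedral axiom to the factorization $g_{i-1}=g_i\circ f_{i-1}$ to produce the triangles $E_i\to B_{i-1}\to B_i\to\Sigma E_i$, with the boundary identifications $B_0\cong F$ and $B_m\cong 0$ coming out automatically.
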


\begin{proof}[Proof of Lemma \ref{op-ent}]
Recall that the shift functor of the opposite category is given by $\Sigma_{\mathcal{D}^\mathrm{op}} = \Sigma_\mathcal{D}^{-1}$.
Therefore, for every $E,F \in \mathcal{D}$, we see that $\delta_t^{\mathcal{D}^\mathrm{op}}(E,F) = \delta_{-t}^\mathcal{D}(E,F)$ by Lemma \ref{op-lem}.
Then the assertion immediately follows from this.
\end{proof}

A {\em dg enhancement} of $\mathcal{D}$ is a pair $(\mathcal{A},\varepsilon)$ of a pretriangulated dg category $\mathcal{A}$ and an exact equivalence $\varepsilon : H^0(\mathcal{A}) \to \mathcal{D}$.
A triangulated category is called {\em algebraic} if it admits a dg enhancement.
Moreover a triangulated category is called {\em saturated} if it admits a dg enhancement $(\mathcal{A},\varepsilon)$ such that $\mathcal{A}$ is a smooth proper dg category.

For a saturated triangulated category $\mathcal{D}$, the categorical entropy of an exact endofunctor $\Phi : \mathcal{D} \to \mathcal{D}$ can be described as the exponential growth rate of the dimension of $\mathrm{Hom}^*(G,\Phi^N(G))$.

\begin{thm}[{\cite[Theorem 2.7]{DHKK}}]\label{dhkk-thm}
Let $\mathcal{D}$ be a saturated triangulated category and $\Phi : \mathcal{D} \to \mathcal{D}$ be an exact endofunctor.
Then we have
\begin{equation*}
h_t(\Phi) = \lim_{N \to \infty} \frac{1}{N} \log \sum_{k \in \mathbf{Z}} \dim \mathrm{Hom}^{-k}(G,\Phi^N(G))e^{kt}
\end{equation*}
for any split-generator $G$ of $\mathcal{D}$.
\end{thm}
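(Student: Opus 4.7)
The plan is to sandwich the Hom-generating series between $\delta_t^{\mathcal{D}}(G,\Phi^N G)$ and a constant multiple of it, namely
$$\sum_k \dim \mathrm{Hom}^{-k}(G, \Phi^N G) \, e^{kt} \;\leq\; C(G,t)\cdot \delta_t^{\mathcal{D}}(G, \Phi^N G) \quad\text{and}\quad \delta_t^{\mathcal{D}}(G, \Phi^N G) \;\leq\; \sum_k \dim \mathrm{Hom}^{-k}(G, \Phi^N G) \, e^{kt},$$
for some finite constant $C(G,t)$ independent of $N$, and then take $\tfrac{1}{N}\log$ and let $N\to\infty$. Hom-finiteness of the saturated category $\mathcal{D}$ guarantees all sums involved are finite.

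The upper bound on the Hom-sum is routine and does not use saturatedness. Given any filtration realising $\delta_t(G,\Phi^N G)$, with subquotients $\Sigma^{k_1}G,\ldots,\Sigma^{k_m}G$ and top $\Phi^N(G)\oplus F'$, I would apply $\mathrm{Hom}(G,-)$ to each triangle in the filtration and iterate the resulting long exact sequences to obtain $\dim \mathrm{Hom}^p(G,\Phi^N G)\le \sum_{i=1}^m \dim \mathrm{Hom}^{p+k_i}(G,G)$ for each $p$. Multiplying by $e^{-pt}$, summing over $p$, reindexing, and taking the infimum over filtrations produces the first inequality with $C(G,t) = \sum_j \dim \mathrm{Hom}^{-j}(G,G)\, e^{jt}$, a finite constant by Hom-finiteness.

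The lower bound is the substantive step and uses saturatedness essentially. By Bondal--Van den Bergh, a saturated triangulated category with split-generator $G$ admits a triangle equivalence $\mathrm{RHom}(G,-)\colon \mathcal{D}\xrightarrow{\sim}\mathrm{perf}(A)$, where $A=\mathrm{REnd}_\mathcal{D}(G)$ is a smooth proper dg algebra. Via Kadeishvili's minimal model theorem, I would replace $A$ by the $A_\infty$-structure on $H^*(A)=\mathrm{Hom}^*(G,G)$, so that every perfect $A_\infty$-module admits a minimal twisted-complex presentation. Properness makes $H^*(\mathrm{RHom}(G,M))$ finite-dimensional, so minimality forces the underlying graded vector space of the model of $\mathrm{RHom}(G,M)$ to be $\mathrm{Hom}^*(G,M)$ itself. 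Translating the minimal twisted complex back across the equivalence yields a filtration of $M$ by shifts $\Sigma^{k_i}G$ with $\#\{i:k_i=k\}=\dim\mathrm{Hom}^{-k}(G,M)$, giving
$$\delta_t^{\mathcal{D}}(G,M)\le \sum_i e^{k_i t} = \sum_k\dim\mathrm{Hom}^{-k}(G,M)\, e^{kt}.$$
Applying this with $M=\Phi^N G$ and taking $\tfrac{1}{N}\log$ as $N\to\infty$ gives the second inequality, and combining the two inequalities forces the Hom-growth limit to exist and to coincide with $h_t(\Phi)$.

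The main obstacle is the lower-bound construction: producing a finite filtration of $M$ whose combinatorial data match the Hom-dimensions exactly. This requires both the smooth proper dg model and the $A_\infty$ minimal-model formalism; without saturatedness, minimal twisted-complex presentations need not be finite, and the bound $\delta_t^{\mathcal{D}}(G,M)\le \sum_k\dim\mathrm{Hom}^{-k}(G,M)\, e^{kt}$ can easily fail.
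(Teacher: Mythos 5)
The paper does not prove this theorem; it is cited verbatim from DHKK, so I am assessing your argument on its own. Your upper bound step is sound, but the claimed pointwise lower bound $\delta_t(G,M)\le \sum_k \dim\mathrm{Hom}^{-k}(G,M)\,e^{kt}$ is false, even in the simplest saturated examples. Take $A$ the path algebra of the $A_2$-quiver $\bullet\to\bullet$, $G=A=P_1\oplus P_2$ (sum of indecomposable projectives), and $M=S_1$ the simple module at the non-sink vertex. Then $\mathrm{Hom}^*(G,S_1)\cong\mathbf{K}$, concentrated in degree $0$, so your right-hand side at $t=0$ equals $1$. But $S_1$ is not a direct summand of any $\Sigma^k G$, so every filtration of $S_1\oplus F'$ by shifts of $G$ has at least two factors and $\delta_0(G,S_1)\ge 2$. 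This shows the minimal-model step cannot deliver what you claim: a minimal $A_\infty$-model of $\mathrm{RHom}(G,M)$ is an $A_\infty$-module structure on the graded vector space $\mathrm{Hom}^*(G,M)$, not a one-sided twisted complex with exactly $\dim\mathrm{Hom}^{-k}(G,M)$ copies of $\Sigma^k G$; the counterexample shows no such filtration of $M$ exists.

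What is true, and what the original proof of DHKK actually uses, is a bound with a multiplicative constant independent of $M$: $\delta_t(G,M)\le C'(G,t)\sum_k\dim\mathrm{Hom}^{-k}(G,M)\,e^{kt}$. This comes from smoothness applied to the diagonal bimodule rather than to $\mathrm{RHom}(G,M)$: since $A$ is perfect over $A^{\mathrm{op}}\otimes A$, the diagonal admits a finite filtration by shifts of $A^{\mathrm{op}}\otimes A$, and applying $-\otimes^{\mathbf{L}}_A M$ together with the observation that $(A^{\mathrm{op}}\otimes A)\otimes^{\mathbf{L}}_A M\simeq A\otimes_{\mathbf{K}}H^*(M)$ (properness guarantees $H^*(M)$ is finite-dimensional, and over a field the underlying complex of $M$ is quasi-isomorphic to its cohomology) produces a filtration of $M$ by shifts of $A$ whose cost factors as $C'(G,t)$ times the Hom-generating series, with $C'(G,t)=\delta_t^{A^{\mathrm{op}}\otimes A}(A^{\mathrm{op}}\otimes A, A)$. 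That constant disappears under $\tfrac{1}{N}\log$, so the entropy identity survives; but the sharp intermediate inequality you wrote is wrong, and the route you sketched for it does not establish even the constant version.
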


\subsection{Entropy and semiorthogonal decompositions}

In this section, we shall see how the categorical entropy interacts with (weak) semiorthogonal decompositions.
The contents of this section will not be used in the rest of this paper.
So the reader can freely skip this section.

Let $\mathcal{D}$ be a $\mathbf{K}$-linear triangulated category.
A full triangulated subcategory $\mathcal{C} \subset \mathcal{D}$ is called {\em right (resp. left) admissible} if the inclusion functor $i : \mathcal{C} \to \mathcal{D}$ admits a right (resp. left) adjoint functor.
It is known that if $\mathcal{C} \subset \mathcal{D}$ is right (resp. left) admissible then its {\em right (resp. left) orthogonal}
\begin{gather*}
\mathcal{C}^{\perp_\mathcal{D}} \coloneqq \{E \in \mathcal{D} \,|\, \mathrm{Hom}(F,E)=0 \text{ for all } F \in \mathcal{C}\}\\
(\text{resp. } ^{\perp_\mathcal{D}}\mathcal{C} \coloneqq \{E \in \mathcal{D} \,|\, \mathrm{Hom}(E,F)=0 \text{ for all } F \in \mathcal{C}\})
\end{gather*}
is left (resp. right) admissible.
We simply write $\mathcal{C}^\perp \coloneqq \mathcal{C}^{\perp_\mathcal{D}}$ and $^\perp\mathcal{C} \coloneqq ^{\perp_\mathcal{D}}\mathcal{C}$ if $\mathcal{D}$ is clear from the context.

\begin{prop}\label{ent-loc}
Let $\mathcal{C}$ be a right (resp. left) admissible subcategory of $\mathcal{D}$.
Then, for an exact autoequivalence $\Phi : \mathcal{D} \to \mathcal{D}$ such that $\Phi(\mathcal{C}) \subset \mathcal{C}$, we have
\begin{gather*}
h_t^\mathcal{D}(\Phi) = \max\{h_t^\mathcal{C}(\Phi|_\mathcal{C}),h_t^{\mathcal{C}^\perp}(\Phi|_{\mathcal{C}^\perp})\}\\
(\text{resp. } h_t^\mathcal{D}(\Phi) = \max\{h_t^\mathcal{C}(\Phi|_\mathcal{C}),h_t^{^\perp\mathcal{C}}(\Phi|_{^\perp\mathcal{C}})\}).
\end{gather*}
\end{prop}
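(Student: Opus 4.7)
The plan is to exploit the exact adjoint functors supplied by admissibility to transfer filtrations back and forth between $\mathcal{D}$ and its two pieces $\mathcal{C}$ and $\mathcal{C}^\perp$. Right admissibility of $\mathcal{C}$ yields an exact right adjoint $R : \mathcal{D} \to \mathcal{C}$ to the inclusion, and left admissibility of $\mathcal{C}^\perp$ yields an exact left adjoint $L : \mathcal{D} \to \mathcal{C}^\perp$; these satisfy $R|_\mathcal{C} \cong \mathrm{id}$, $R|_{\mathcal{C}^\perp} = 0$, $L|_{\mathcal{C}^\perp} \cong \mathrm{id}$ and $L|_\mathcal{C} = 0$. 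Fix split-generators $G_\mathcal{C} \in \mathcal{C}$ and $G_{\mathcal{C}^\perp} \in \mathcal{C}^\perp$; then $G \coloneqq G_\mathcal{C} \oplus G_{\mathcal{C}^\perp}$ is a split-generator of $\mathcal{D}$. I will also use that since $\Phi$ is an autoequivalence with $\Phi(\mathcal{C}) \subset \mathcal{C}$, it also preserves $\mathcal{C}^\perp$, so that $\Phi|_{\mathcal{C}^\perp}$ is a genuine endofunctor and both entropies on the right-hand side are defined; verifying this stability is the first technical point I expect to address.

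For the upper bound I would split $\Phi^N(G) \cong \Phi^N(G_\mathcal{C}) \oplus \Phi^N(G_{\mathcal{C}^\perp})$ and apply subadditivity (Lemma~\ref{dhkk-lem}~(2)). Each summand is then bounded by combining Lemma~\ref{dhkk-lem}~(1) with the estimate $\delta_t^\mathcal{D}(G,G_\mathcal{C}) \leq 1$ (using the trivial filtration $0 \to G_\mathcal{C} \oplus G_{\mathcal{C}^\perp}$) together with the obvious inequality $\delta_t^\mathcal{D}(G_\mathcal{C},\Phi^N(G_\mathcal{C})) \leq \delta_t^\mathcal{C}(G_\mathcal{C},(\Phi|_\mathcal{C})^N(G_\mathcal{C}))$, which comes from viewing any filtration inside $\mathcal{C}$ as a filtration inside $\mathcal{D}$, and symmetrically for the $\mathcal{C}^\perp$-part. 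Taking $\tfrac{1}{N}\log$ and using $\log(a+b) \leq \log 2 + \max(\log a, \log b)$ then yields $h_t^\mathcal{D}(\Phi) \leq \max\{h_t^\mathcal{C}(\Phi|_\mathcal{C}),\,h_t^{\mathcal{C}^\perp}(\Phi|_{\mathcal{C}^\perp})\}$.

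For the two lower bounds I would apply Lemma~\ref{dhkk-lem}~(3) to the exact adjoints $R$ and $L$. Applied to $R$ one obtains
$$\delta_t^\mathcal{C}\bigl(R(G),\,R(\Phi^N(G))\bigr) \leq \delta_t^\mathcal{D}\bigl(G,\Phi^N(G)\bigr),$$
and the defining properties of $R$ combined with the $\Phi$-invariance of $\mathcal{C}$ and $\mathcal{C}^\perp$ identify the left-hand side with $\delta_t^\mathcal{C}(G_\mathcal{C},(\Phi|_\mathcal{C})^N(G_\mathcal{C}))$, yielding $h_t^\mathcal{C}(\Phi|_\mathcal{C}) \leq h_t^\mathcal{D}(\Phi)$; the same argument with $L$ in place of $R$ gives $h_t^{\mathcal{C}^\perp}(\Phi|_{\mathcal{C}^\perp}) \leq h_t^\mathcal{D}(\Phi)$. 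The left admissible case follows by a completely symmetric argument with $^\perp\mathcal{C}$ in place of $\mathcal{C}^\perp$ and the roles of $L$ and $R$ interchanged. Once the stability of $\mathcal{C}^\perp$ under $\Phi$ is settled, the rest of the proof is a mechanical combination of the three parts of Lemma~\ref{dhkk-lem}.
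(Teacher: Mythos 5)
Your proof is correct and follows essentially the same route as the paper: both derive the two lower bounds by applying Lemma~\ref{dhkk-lem}~(3) to the adjoint projections $i^R$ and $j^L$, and both derive the upper bound by splitting $\Phi^N(G)$, invoking subadditivity, and passing to the limit. The only small variation is in the last step, where the paper invokes Lemma~\ref{lim-lem} (Fekete plus submultiplicativity of $a_N + b_N$), while you use the direct estimate $\log(a+b) \leq \log 2 + \max(\log a,\log b)$ together with the existence of the two individual limits --- an equivalent, slightly more elementary finish.
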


If $\{a_N\}_{N=1}^\infty$ is a submultiplicative sequence of positive real numbers then $\{\log a_N\}_{N=1}^\infty$ is a subadditive sequence and therefore the limit of the sequence $\left\{\frac{1}{N} \log a_N\right\}_{N=1}^\infty$ exists and can be written as
\begin{equation*}
\lim_{N \to \infty} \frac{1}{N} \log a_N = \inf_{N \geq 1} \frac{1}{N} \log a_N
\end{equation*}
by Fekete's lemma.

The following can be easily verified.

\begin{lem}\label{lim-lem}
Let $\{a_N\}_{N=1}^\infty,\{b_N\}_{N=1}^\infty$ be submultiplicative sequences of positive real numbers.
Then the sequence $\{a_N+b_N\}_{N=1}^\infty$ is also submultiplicative and
\begin{equation*}
\lim_{N \to \infty} \frac{1}{N} \log (a_N+b_N) = \max\left\{\lim_{N \to \infty} \frac{1}{N} \log a_N,\lim_{N \to \infty} \frac{1}{N} \log b_N\right\}.
\end{equation*}
\end{lem}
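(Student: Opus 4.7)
The plan is to verify the two assertions separately, neither of which requires more than elementary estimates.

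First I would check that $\{a_N+b_N\}_{N=1}^\infty$ is submultiplicative. Since all quantities are positive and $a_{M+N}\leq a_Ma_N$, $b_{M+N}\leq b_Mb_N$, expanding $(a_M+b_M)(a_N+b_N)=a_Ma_N+a_Mb_N+b_Ma_N+b_Mb_N$ and discarding the two nonnegative cross terms yields
\begin{equation*}
a_{M+N}+b_{M+N}\leq a_Ma_N+b_Mb_N\leq(a_M+b_M)(a_N+b_N).
\end{equation*}
The existence of the limit $\gamma\coloneqq\lim_{N\to\infty}\frac{1}{N}\log(a_N+b_N)$ then follows from Fekete's lemma as recalled just above the statement.

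For the identification with the maximum, set $\alpha\coloneqq\lim_{N\to\infty}\frac{1}{N}\log a_N$ and $\beta\coloneqq\lim_{N\to\infty}\frac{1}{N}\log b_N$. The inequality $\max\{\alpha,\beta\}\leq\gamma$ is immediate from monotonicity of $\log$ together with the bounds $a_N,b_N\leq a_N+b_N$. For the reverse direction, fix $\varepsilon>0$; since $\frac{1}{N}\log a_N\to\alpha$ and $\frac{1}{N}\log b_N\to\beta$, for all sufficiently large $N$ one has $a_N\leq e^{N(\alpha+\varepsilon)}$ and $b_N\leq e^{N(\beta+\varepsilon)}$, whence
\begin{equation*}
a_N+b_N\leq 2\,e^{N(\max\{\alpha,\beta\}+\varepsilon)}.
\end{equation*}
Taking $\frac{1}{N}\log$ of both sides and sending $N\to\infty$ gives $\gamma\leq\max\{\alpha,\beta\}+\varepsilon$, and since $\varepsilon$ was arbitrary this yields the reverse inequality.

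There is no serious obstacle here; the only mild points to be careful about are the use of positivity when passing from $a_Ma_N+b_Mb_N$ to $(a_M+b_M)(a_N+b_N)$, and the appeal to \emph{existence} (not merely $\limsup$) of the individual limits when uniformly bounding $a_N,b_N$ above by exponentials with arbitrarily small error. The possibility that $\alpha$ or $\beta$ equals $-\infty$ is handled by the same argument with $\alpha+\varepsilon$, $\beta+\varepsilon$ replaced by any sufficiently negative constant.
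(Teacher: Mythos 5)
Your proof is correct. The paper itself offers no proof of this lemma (it is prefaced only with ``The following can be easily verified''), so there is no argument in the text to compare against; your argument is the natural one. All the steps check out: the two cross terms $a_M b_N + b_M a_N \geq 0$ give submultiplicativity, Fekete's lemma (already cited in the paper just before the statement) gives existence of $\gamma$, the lower bound $\max\{\alpha,\beta\}\leq\gamma$ is immediate from $a_N,b_N\leq a_N+b_N$, and your $\varepsilon$-argument gives the upper bound, with the additive constant $2$ disappearing after dividing by $N$. Your closing remark that the argument degrades gracefully when $\alpha$ or $\beta$ equals $-\infty$ is a good catch; note also that $+\infty$ cannot occur, since Fekete gives $\lim_{N}\frac{1}{N}\log a_N=\inf_{N}\frac{1}{N}\log a_N\leq\log a_1<\infty$.
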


\begin{proof}[Proof of Proposition \ref{ent-loc}]
Without loss of generality, we assume that $\mathcal{C}$ is right admissible.
Consider the following diagram
\begin{equation*}
\begin{tikzcd}
\mathcal{C} \ar[r,shift right,"i",swap] & \mathcal{D} \ar[r,shift right,"j^L",swap] \ar[l,shift right,"i^R",swap] & \mathcal{C}^\perp \ar[l,shift right,"j",swap]
\end{tikzcd}
\end{equation*}
where $i : \mathcal{C} \to \mathcal{D}$, $j : \mathcal{C}^\perp \to \mathcal{D}$ are the inclusion functors and $i^R : \mathcal{D} \to \mathcal{C}$, $j^L : \mathcal{D} \to \mathcal{C}^\perp$ are their adjoint functors.
Note that $j^L \circ i = 0$ and $i^R \circ j = 0$.

Now let $G_\mathcal{C}$ and $G_{\mathcal{C}^\perp}$ be split-generators of $\mathcal{C}$ and $\mathcal{C}^\perp$ respectively.
Then clearly
\begin{equation*}
G \coloneqq i(G_\mathcal{C}) \oplus j(G_{\mathcal{C}^\perp})
\end{equation*}
is a split-generator of $\mathcal{D}$.

(Step 1)
Since $i^R(\Phi^N(G)) = i^R(\Phi^N(i(G_\mathcal{C}))) = \Phi|_\mathcal{C}^N(G_\mathcal{C})$, we have
\begin{equation*}
\delta_t^\mathcal{D}(G,\Phi^N(G)) \geq \delta_t^\mathcal{C}(i^R(G),i^R(\Phi^N(G))) = \delta_t^\mathcal{C}(G_\mathcal{C},\Phi|_\mathcal{C}^N(G_\mathcal{C}))
\end{equation*}
by Lemma \ref{dhkk-lem} (3).
This shows that $h_t^\mathcal{D}(\Phi) \geq h_t^\mathcal{C}(\Phi|_\mathcal{C})$.
Similarly, we get $h_t^\mathcal{D}(\Phi) \geq h_t^{\mathcal{C}^\perp}(\Phi|_{\mathcal{C}^\perp})$.

(Step 2)
Using Lemma \ref{dhkk-lem} (2), (3), we obtain
\begin{align*}
\delta_t^\mathcal{D}(G,\Phi^N(G))
&\leq \delta_t^\mathcal{D}(G,\Phi^N(i(G_\mathcal{C}))) + \delta_t^\mathcal{D}(G,\Phi^N(j(G_{\mathcal{C}^\perp})))\\
&\leq \delta_t^\mathcal{D}(i(G_\mathcal{C}),\Phi^N(i(G_\mathcal{C}))) + \delta_t^\mathcal{D}(j(G_{\mathcal{C}^\perp}),\Phi^N(j(G_{\mathcal{C}^\perp})))\\
&= \delta_t^\mathcal{D}(i(G_\mathcal{C}),i(\Phi|_\mathcal{C}^N(G_\mathcal{C}))) + \delta_t^\mathcal{D}(j(G_{\mathcal{C}^\perp}),j(\Phi|_{\mathcal{C}^\perp}^N(G_{\mathcal{C}^\perp})))\\
&\leq \delta_t^\mathcal{C}(G_\mathcal{C},\Phi|_\mathcal{C}^N(G_\mathcal{C})) + \delta_t^{\mathcal{C}^\perp}(G_{\mathcal{C}^\perp},\Phi|_{\mathcal{C}^\perp}^N(G_{\mathcal{C}^\perp})).
\end{align*}
As $\{\delta_t^\mathcal{C}(G_\mathcal{C},\Phi|_\mathcal{C}^N(G_\mathcal{C}))\}_{N=1}^\infty$ and $\{\delta_t^{\mathcal{C}^\perp}(G_{\mathcal{C}^\perp},\Phi|_{\mathcal{C}^\perp}^N(G_{\mathcal{C}^\perp}))\}_{N=1}^\infty$ are submultiplicative, we can apply Lemma \ref{lim-lem} and get $h_t^\mathcal{D}(\Phi) \leq \max\{h_t^\mathcal{C}(\Phi|_\mathcal{C}),h_t^{\mathcal{C}^\perp}(\Phi|_{\mathcal{C}^\perp})\}$.
\end{proof}

Let us recall the definition of a (weak) semiorthogonal decomposition.

\begin{dfn}[{\cite[Definition 4.1]{BK},\cite[Definition 5]{Orl}}]
A sequence $(\mathcal{C}_1,\dots,\mathcal{C}_n)$ of full triangulated subcategories of $\mathcal{D}$ is called a {\em weak semiorthogonal decomposition} of $\mathcal{D}$ if there exist left admissible subcategories $\mathcal{C}_1 \eqqcolon \mathcal{D}_1 \subset \mathcal{D}_2 \subset \cdots \subset \mathcal{D}_n \coloneqq \mathcal{D}$ such that $\mathcal{C}_i^{\perp_{\mathcal{D}_i}} = \mathcal{D}_{i-1}$ for every $1 < i \leq n$.
In this case, we write $\mathcal{D} = \langle\mathcal{C}_1,\dots,\mathcal{C}_n\rangle$.
If $\mathcal{D}_i$'s are also right admissible then $\mathcal{D} = \langle\mathcal{C}_1,\dots,\mathcal{C}_n\rangle$ is called a {\em semiorthogonal decomposition} of $\mathcal{D}$.
\end{dfn}

\begin{cor}\label{sod}
Suppose $\mathcal{D}$ admits a weak semiorthogonal decomposition $\langle\mathcal{C}_1,\dots,\mathcal{C}_n\rangle$.
Then, for an exact autoequivalence $\Phi : \mathcal{D} \to \mathcal{D}$ such that $\Phi(\mathcal{C}_i) \subset \mathcal{C}_i$ for every $1 \leq i \leq n$, we have
\begin{equation*}
h_t^\mathcal{D}(\Phi) = \max_{1 \leq i \leq n} h_t^{\mathcal{C}_i}(\Phi|_{\mathcal{C}_i}).
\end{equation*}
\end{cor}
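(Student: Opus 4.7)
The plan is to induct on $n$, at each step using Proposition \ref{ent-loc} to peel off the last factor $\mathcal{C}_n$. The base case $n=1$ is immediate since $\mathcal{D} = \mathcal{C}_1$, so fix $n \geq 2$ and assume the statement for weak SODs of length $n-1$.

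The key geometric observation is that $\mathcal{D}_{n-1}$ is a left admissible triangulated subcategory of $\mathcal{D}_n = \mathcal{D}$ by definition of a weak SOD, and that ${}^\perp \mathcal{D}_{n-1} = \mathcal{C}_n$ inside $\mathcal{D}$. The inclusion $\mathcal{C}_n \subset {}^\perp \mathcal{D}_{n-1}$ is immediate from $\mathcal{D}_{n-1} = \mathcal{C}_n^\perp$; for the reverse, given $E \in {}^\perp \mathcal{D}_{n-1}$, I would use the decomposition triangle $K \to E \to D \to \Sigma K$ with $K \in {}^\perp\mathcal{D}_{n-1}$, $D \in \mathcal{D}_{n-1}$ coming from the left adjoint of $\mathcal{D}_{n-1} \hookrightarrow \mathcal{D}$, combined with the weak SOD structure identifying this left orthogonal with $\mathcal{C}_n$.

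Before invoking Proposition \ref{ent-loc}, I would also verify: (i) $\Phi(\mathcal{D}_{n-1}) \subset \mathcal{D}_{n-1}$, which follows from iterating the decomposition triangles down the filtration to see that $\mathcal{D}_{n-1}$ is the triangulated subcategory generated by $\mathcal{C}_1, \dots, \mathcal{C}_{n-1}$, each preserved by $\Phi$; and (ii) that $\Phi|_{\mathcal{D}_{n-1}}$ is itself an autoequivalence. For (ii), given $Y \in \mathcal{D}_{n-1}$, I would decompose $\Phi^{-1}(Y)$ via the decomposition triangle with pieces in $\mathcal{C}_n$ and $\mathcal{D}_{n-1}$, apply $\Phi$ (using $\Phi(\mathcal{C}_n) \subset \mathcal{C}_n$ and (i)), and use the uniqueness of the SOD decomposition of $Y$ to force the $\mathcal{C}_n$-piece of $\Phi^{-1}(Y)$ to vanish. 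Proposition \ref{ent-loc} (left admissible case) then gives
$$h_t^\mathcal{D}(\Phi) = \max\{h_t^{\mathcal{D}_{n-1}}(\Phi|_{\mathcal{D}_{n-1}}), h_t^{\mathcal{C}_n}(\Phi|_{\mathcal{C}_n})\},$$
and the inductive hypothesis applied to $\mathcal{D}_{n-1}$ with its weak SOD $\langle \mathcal{C}_1, \dots, \mathcal{C}_{n-1}\rangle$ and the autoequivalence $\Phi|_{\mathcal{D}_{n-1}}$ concludes the argument.

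I expect the main obstacle to be the identification $\mathcal{C}_n = {}^\perp \mathcal{D}_{n-1}$: the paper's stated definition of a weak SOD supplies only the half $\mathcal{D}_{n-1} = \mathcal{C}_n^\perp$ directly, and extracting the reverse inclusion requires a careful argument using left admissibility, thickness of $\mathcal{D}_{n-1}$, and (implicitly) that the $\mathcal{C}_i$ collectively generate $\mathcal{D}$; once this identification is secured, the remaining steps are routine applications of Proposition \ref{ent-loc} and the inductive hypothesis.
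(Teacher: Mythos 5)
Your approach coincides with the paper's: both arguments iterate Proposition \ref{ent-loc}, peeling off one component of the weak semiorthogonal decomposition at a time (the paper writes it as a chain of equalities on $h_t$, yours as an explicit induction on $n$ — the same thing, with the paper invoking the right-admissibility of $\mathcal{C}_i \subset \mathcal{D}_i$ and you the left-admissibility of $\mathcal{D}_{n-1} \subset \mathcal{D}$, which are the two phrasings of the same application of that proposition). The paper simply states the identification $\mathcal{C}_i = {}^{\perp_{\mathcal{D}_i}}\mathcal{D}_{i-1}$ as a standard fact about admissible subcategories and does not spell out the verifications you flag, namely that $\Phi$ preserves each $\mathcal{D}_i$ and restricts to an autoequivalence there; your proposal is a more careful rendering of the same proof.
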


\begin{proof}
Take left admissible subcategories $\mathcal{C}_1 \eqqcolon \mathcal{D}_1 \subset \mathcal{D}_2 \subset \cdots \subset \mathcal{D}_n \coloneqq \mathcal{D}$ such that $\mathcal{C}_i^{\perp_{\mathcal{D}_i}} = \mathcal{D}_{i-1}$ for every $1 < i \leq n$.
Note that $\mathcal{C}_i = ^{\perp_{\mathcal{D}_i}}\mathcal{D}_{i-1} \subset \mathcal{D}_i$ is right admissible for every $1 < i \leq n$.
Therefore we see that
\begin{align*}
h_t^\mathcal{D}(\Phi)
&= \max\{h_t^{\mathcal{C}_n}(\Phi|_{\mathcal{C}_n}),h_t^{\mathcal{D}_{n-1}}(\Phi|_{\mathcal{D}_{n-1}})\}\\
&= \max\{h_t^{\mathcal{C}_n}(\Phi|_{\mathcal{C}_n}),h_t^{\mathcal{C}_{n-1}}(\Phi|_{\mathcal{C}_{n-1}}),h_t^{\mathcal{D}_{n-2}}(\Phi|_{\mathcal{D}_{n-2}})\}\\
&\;\;\vdots\\
&= \max_{1 \leq i \leq n} h_t^{\mathcal{C}_i}(\Phi|_{\mathcal{C}_i})
\end{align*}
by applying Proposition \ref{ent-loc} iteratively.
\end{proof}

\section{Entropy and t-structures}

\subsection{t-structures}

Let us first review the definition and basic properties of t-structures.
The reader may refer to \cite{Jor} for a brief overview of this subject.

Let $\mathcal{D}$ be a $\mathbf{K}$-linear idempotent complete triangulated category.

\begin{dfn}[{\cite[D\'efinition 1.3.1]{BBD}}]\label{t-str}
A {\em t-structure} on $\mathcal{D}$ is a pair $\mathfrak{a} = (\mathcal{D}^{\leq 0},\mathcal{D}^{\geq 0})$ of full additive subcategories of $\mathcal{D}$ satisfying the following conditions:
\begin{enumerate}
\item $\Sigma\mathcal{D}^{\leq 0} \subset \mathcal{D}^{\leq 0}$ and $\Sigma^{-1}\mathcal{D}^{\geq 0} \subset \mathcal{D}^{\geq 0}$.
\item $\mathrm{Hom}(E,F) = 0$ if $E \in \mathcal{D}^{\leq 0},F \in \mathcal{D}^{\geq 1}$ where $\mathcal{D}^{\geq 1} \coloneqq \Sigma^{-1}\mathcal{D}^{\geq 0}$.
\item For every $E \in \mathcal{D}$, there exists an exact triangle $D \to E \to F \to \Sigma D$ with $D \in \mathcal{D}^{\leq 0},F \in \mathcal{D}^{\geq 1}$.
\end{enumerate}
If they further satisfy
\begin{equation*}
\mathcal{D} = \bigcup_{k \in \mathbf{Z}} \Sigma^k\mathcal{D}^{\leq 0} = \bigcup_{k \in \mathbf{Z}} \Sigma^k\mathcal{D}^{\geq 0}
\end{equation*}
then $\mathfrak{a}$ is called {\em bounded}.
\end{dfn}

Let $\mathcal{A}$ be an abelian category.
A non-zero object of $\mathcal{A}$ is called {\em simple} if its only subobjects are 0 or itself.
A {\em composition series} of $E \in \mathcal{A}$ is a finite series of subobjects
\begin{equation*}
0 = E_0 \subset E_1 \subset \cdots \subset E_m = E
\end{equation*}
such that $E_i/E_{i-1}$ is simple for every $1 \leq i \leq m$.
In this case, the integer $m \geq 0$, called the {\em length} of $E$, does not depend on the choice of a composition series.
Moreover the simple quotients $E_1/E_0,\dots,E_m/E_{m-1}$, called the {\em composition factors} of $E$, do not depend on the choice of a composition series up to isomorphisms.
An abelian category is called a {\em length category} if every its object has a composition series.

\begin{dfn}
Let $\mathfrak{a} = (\mathcal{D}^{\leq 0},\mathcal{D}^{\geq 0})$ be a t-structure on $\mathcal{D}$.
The subcategory $\mathcal{H}_\mathfrak{a} \coloneqq \mathcal{D}^{\leq 0} \cap \mathcal{D}^{\geq 0}$, called the {\em heart} of $\mathfrak{a}$, is an abelian category.
We say $\mathcal{H}_\mathfrak{a}$ is {\em finite} if it is a length category and only has finitely many isomorphism classes of simple objects.
If the heart $\mathcal{H}_\mathfrak{a}$ is finite then, for $E \in \mathcal{H}_\mathfrak{a}$, we define $\lambda_{\mathcal{H}_\mathfrak{a}}(E) \geq 0$ as the length of a composition series of $E$ in $\mathcal{H}_\mathfrak{a}$.
\end{dfn}

The object $D \in \mathcal{D}^{\leq 0}$ (resp. $F \in \mathcal{D}^{\geq 1}$) in Definition \ref{t-str} (3) depends functorially on $E$ and so defines the {\em truncation functor} $\tau_\mathfrak{a}^{\leq 0} : \mathcal{D} \to \mathcal{D}^{\leq 0}$ (resp. $\tau_\mathfrak{a}^{\geq 1} : \mathcal{D} \to \mathcal{D}^{\geq 1}$).
Define $\tau^{\geq 0} \coloneqq \Sigma \circ \tau^{\geq 1} \circ \Sigma^{-1} : \mathcal{D} \to \mathcal{D}^{\geq 0}$.
For $k \in \mathbf{Z}$, the {\em $k$th cohomology functor} with respect to $\mathfrak{a}$ is defined by
\begin{equation*}
H_\mathfrak{a}^k \coloneqq \tau^{\leq 0} \circ \tau^{\geq 0} \circ \Sigma^k : \mathcal{D} \to \mathcal{H}_\mathfrak{a}.
\end{equation*}

A t-structure provides a nice way to find a filtration of an arbitrary object.

\begin{prop}[{\cite[Section 1.3.12]{BBD}}]\label{t-filt}
Let $\mathfrak{a}$ be a bounded t-structure on $\mathcal{D}$.
Then, for any $E \in \mathcal{D}$, there exists a filtration
\begin{equation*}
\begin{tikzcd}[column sep=tiny]
0 \ar[rr] & & * \ar[dl] \ar[rr] & & * \ar[dl] & \cdots & * \ar[rr] & & E \ar[dl]\\
& \Sigma^{k_1}E_1 \ar[ul,dashed] & & \Sigma^{k_2}E_2 \ar[ul,dashed] & & & & \Sigma^{k_m}E_m \ar[ul,dashed] &
\end{tikzcd}
\end{equation*}
such that $E_i \in \mathcal{H}_\mathfrak{a}$ and $k_1>\cdots>k_m$.
\end{prop}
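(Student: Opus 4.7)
The plan is to proceed by induction on the cohomological amplitude of $E$, using the truncation triangles of the bounded t-structure $\mathfrak{a}$ to peel off the cohomology objects one at a time.

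First, using boundedness, I would fix $E \in \mathcal{D}$ and note that the set of integers $k$ with $H_\mathfrak{a}^{-k}(E) \neq 0$ is finite; write it as $k_1 > k_2 > \cdots > k_m$, and set $a_i \coloneqq -k_i$ so that $a_1 < \cdots < a_m$ are the cohomological degrees in which $E$ has support. Put $E_i \coloneqq H_\mathfrak{a}^{a_i}(E) \in \mathcal{H}_\mathfrak{a}$. The candidate filtration will be obtained by taking iterated lower truncations $F_i \coloneqq \tau_\mathfrak{a}^{\leq a_i} E$, together with $F_0 \coloneqq 0$ and $F_m = E$ (the latter because $E \in \mathcal{D}^{\leq a_m}$).

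Next I would identify the cones of the comparison maps $F_{i-1} \to F_i$. By the standard octahedral argument for t-structures, the natural morphism $\tau^{\leq a_{i-1}} E \to \tau^{\leq a_i} E$ fits into an exact triangle whose third term is $\tau^{\leq a_i} \tau^{\geq a_{i-1}+1} E$ (equivalently, $\tau^{[a_{i-1}+1,\,a_i]} E$). Since $E$ has no cohomology in degrees strictly between $a_{i-1}$ and $a_i$ by construction, this intermediate truncation is concentrated in the single degree $a_i$, so it is isomorphic to $\Sigma^{-a_i} H_\mathfrak{a}^{a_i}(E) = \Sigma^{k_i} E_i$. An analogous argument handles the first step $0 \to F_1$, giving $F_1 \cong \Sigma^{k_1} E_1$ directly from the fact that $E \in \mathcal{D}^{\geq a_1}$ forces $\tau^{\leq a_1} E \in \mathcal{H}_\mathfrak{a}[-a_1]$.

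Finally I would assemble these $m$ exact triangles
\begin{equation*}
F_{i-1} \longrightarrow F_i \longrightarrow \Sigma^{k_i} E_i \longrightarrow \Sigma F_{i-1},\qquad i=1,\dots,m,
\end{equation*}
into the desired filtration diagram with vertices $F_0, F_1, \dots, F_m = E$ and dashed edges $\Sigma^{k_i} E_i$, observing that $k_1 > \cdots > k_m$ holds by construction.

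The only nontrivial step is the cone identification in the second paragraph; the rest is bookkeeping. This step uses only the octahedral axiom applied to the triangles defining $\tau^{\leq a_{i-1}}$, $\tau^{\leq a_i}$ and $\tau^{\geq a_{i-1}+1}$, together with the elementary fact that an object lying simultaneously in $\mathcal{D}^{\geq n}$ and $\mathcal{D}^{\leq n}$ is (up to shift) an object of the heart. Hence no genuine obstacle arises, and the argument is essentially a careful unpacking of the t-structure axioms under the boundedness hypothesis.
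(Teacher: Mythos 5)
Your proof is correct and is precisely the standard argument from [BBD, \S 1.3.12], which is what the paper cites in lieu of giving its own proof: take iterated lower truncations $F_i = \tau^{\leq a_i}E$ at the degrees where cohomology is supported, identify each cone $F_{i-1}\to F_i$ via the octahedral axiom as $\tau^{[a_{i-1}+1,a_i]}E$, and use the absence of cohomology strictly between consecutive supported degrees to collapse that cone to $\Sigma^{k_i}H^{a_i}_{\mathfrak{a}}(E)$. No gap; this is exactly the intended route.
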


\begin{rmk}
A filtration of $E$ as in Proposition \ref{t-filt} is determined uniquely up to isomorphisms.
In fact, there are isomorphisms $E_i \cong H_\mathfrak{a}^{k_i}(E)$ for all $1 \leq i \leq m$.
In this paper, a filtration of $E$ as in Proposition \ref{t-filt} will be called a {\em t-filtration} of $E$ with respect to $\mathfrak{a}$.
\end{rmk}

\subsection{Entropy via t-structures}

The goal of this section is to define an entropy-type invariant using a bounded t-structure with finite heart and study its relationship with the categorical entropy.

Let $\mathcal{D}$ be a $\mathbf{K}$-linear idempotent complete triangulated category.

\begin{dfn}
Let $\mathfrak{a}$ be a bounded t-structure on $\mathcal{D}$ with finite heart.
For $E \in \mathcal{D}$, let
\begin{equation*}
\begin{tikzcd}[column sep=tiny]
0 \ar[rr] & & * \ar[dl] \ar[rr] & & * \ar[dl] & \cdots & * \ar[rr] & & E \ar[dl]\\
& \Sigma^{k_1}E_1 \ar[ul,dashed] & & \Sigma^{k_2}E_2 \ar[ul,dashed] & & & & \Sigma^{k_m}E_m \ar[ul,dashed] &
\end{tikzcd}
\end{equation*}
be its t-filtration with respect to $\mathfrak{a}$.
Then, for $t \in \mathbf{R}$, we define
\begin{equation*}
\hat{\delta}_t^{\mathcal{D},\mathfrak{a}}(E) \coloneqq \sum_{i=1}^m \lambda_{\mathcal{H}_\mathfrak{a}}(E_i) e^{k_i t}.
\end{equation*}
We simply write $\hat{\delta}_t^\mathfrak{a}(E) \coloneqq \hat{\delta}_t^{\mathcal{D},\mathfrak{a}}(E)$ if $\mathcal{D}$ is clear from the context.
\end{dfn}

\begin{dfn}\label{t-ent}
Let $\mathfrak{a}$ be a bounded t-structure on $\mathcal{D}$ with finite heart and $\Phi : \mathcal{D} \to \mathcal{D}$ be an exact endofunctor.
Then we define
\begin{equation*}
\hat{h}_t^{\mathcal{D},\mathfrak{a}}(\Phi) \coloneqq \sup_{E \in \mathcal{D}} \limsup_{N \to \infty} \frac{1}{N} \log \hat{\delta}_t^{\mathcal{D},\mathfrak{a}}(\Phi^N(E)).
\end{equation*}
We simply write $\hat{h}_t^\mathfrak{a}(\Phi) \coloneqq \hat{h}_t^{\mathcal{D},\mathfrak{a}}(\Phi)$ if $\mathcal{D}$ is clear from the context.
\end{dfn}

The definition of $\hat{h}_t^\mathfrak{a}$ is an analogue of that of the {\em mass growth} which is defined using the mass function of a stability condition instead of $\hat{\delta}_t^\mathfrak{a}$ (see \cite[Section 4.5]{DHKK} and \cite[Definition 3.2]{Ike}).

The following is one of the main theorems of this paper.
It particularly states that $\hat{h}_t^\mathfrak{a}$ coincides with the categorical entropy.

\begin{thm}\label{t-thm-1}
Let $\mathfrak{a}$ be a bounded t-structure on $\mathcal{D}$ with finite heart and $\Phi : \mathcal{D} \to \mathcal{D}$ be an exact endofunctor.
Then we have
\begin{equation*}
h_t(\Phi) = \hat{h}_t^\mathfrak{a}(\Phi) = \lim_{N \to \infty} \frac{1}{N} \log \hat{\delta}_t^\mathfrak{a}(\Phi^N(G))
\end{equation*}
for any split-generator $G$ of $\mathcal{D}$.
In particular, $\hat{h}_t^\mathfrak{a}(\Phi)$ does not depend on the choice of $\mathfrak{a}$.
\end{thm}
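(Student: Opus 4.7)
The plan is to sandwich $\hat{\delta}_t^\mathfrak{a}(E)$ between constant multiples of $\delta_t(S,E)$, where $S \coloneqq S_1 \oplus \cdots \oplus S_n$ is the direct sum of a complete set of pairwise non-isomorphic simple objects of $\mathcal{H}_\mathfrak{a}$. Because $\mathfrak{a}$ is bounded and $\mathcal{H}_\mathfrak{a}$ is a length category, iterating the t-filtration of Proposition \ref{t-filt} together with composition series in the heart shows that $S$ is a split-generator of $\mathcal{D}$, so Definition \ref{ent} is available with this particular generator.

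The technical core is to show that $\hat{\delta}_t^\mathfrak{a}$ obeys the same calculus as $\delta_t$ with respect to shifts, direct sums, and exact triangles. Rewriting the definition as
\[ \hat{\delta}_t^\mathfrak{a}(E) = \sum_{k \in \mathbf{Z}} \lambda_{\mathcal{H}_\mathfrak{a}}(H_\mathfrak{a}^k(E)) e^{kt}, \]
one checks immediately that $\hat{\delta}_t^\mathfrak{a}(\Sigma E) = e^{t}\hat{\delta}_t^\mathfrak{a}(E)$ and that $\hat{\delta}_t^\mathfrak{a}$ is additive on direct sums. The key inequality is $\hat{\delta}_t^\mathfrak{a}(E) \leq \hat{\delta}_t^\mathfrak{a}(D) + \hat{\delta}_t^\mathfrak{a}(F)$ for every exact triangle $D \to E \to F \to \Sigma D$; this follows from the exactness of $H_\mathfrak{a}^k(D) \to H_\mathfrak{a}^k(E) \to H_\mathfrak{a}^k(F)$ in $\mathcal{H}_\mathfrak{a}$ together with additivity of $\lambda_{\mathcal{H}_\mathfrak{a}}$ on short exact sequences.

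From these properties I would deduce the two-sided comparison. For the upper bound, iterating the triangle inequality along any filtration of $E \oplus E'$ with pieces $\Sigma^{k_i}S$ gives $\hat{\delta}_t^\mathfrak{a}(E) \leq \hat{\delta}_t^\mathfrak{a}(E \oplus E') \leq n\sum_i e^{k_i t}$, so that $\hat{\delta}_t^\mathfrak{a}(E) \leq n \cdot \delta_t(S,E)$. For the lower bound, starting from the t-filtration of $E$ and refining each piece $\Sigma^{k_i}H_\mathfrak{a}^{k_i}(E)$ by a composition series of $H_\mathfrak{a}^{k_i}(E)$ yields a filtration of $E$ whose pieces are shifts of individual simples $\Sigma^{k_i}S_{j_r}$. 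Forming the direct sum of each such triangle with the identity triangle on $\Sigma^{k_i}\bigoplus_{j \neq j_r}S_j$ converts each piece into $\Sigma^{k_i}S$ at the same cost $e^{k_i t}$, at the price of replacing $E$ by $E \oplus F'$ for some auxiliary $F'$ allowed by the definition of $\delta_t$; this yields $\delta_t(S,E) \leq \hat{\delta}_t^\mathfrak{a}(E)$.

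To finish, specialising $E = \Phi^N(G)$ and using Lemma \ref{dhkk-lem} (1), (3) with the finite quantities $\delta_t(S,G), \delta_t(G,S)$ shows that the sequences $\frac{1}{N}\log\delta_t(G,\Phi^N(G))$ and $\frac{1}{N}\log\hat{\delta}_t^\mathfrak{a}(\Phi^N(G))$ have identical $\liminf$ and $\limsup$, so the latter limit exists and equals $h_t(\Phi)$. For the supremum defining $\hat{h}_t^\mathfrak{a}(\Phi)$, the uniform estimate $\hat{\delta}_t^\mathfrak{a}(\Phi^N(E)) \leq n \cdot \delta_t(S,E) \cdot \delta_t(S,\Phi^N(S))$, valid for every $E \in \mathcal{D}$, gives $\hat{h}_t^\mathfrak{a}(\Phi) \leq h_t(\Phi)$, and specialising $E = G$ in the supremum provides the reverse inequality. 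The one step requiring genuine care is the triangle inequality for $\hat{\delta}_t^\mathfrak{a}$, since the definition is stated only via the canonical t-filtration rather than arbitrary filtrations; once the reformulation in terms of cohomology objects and the long exact sequence is in place, the rest of the argument is formal manipulation of filtrations.
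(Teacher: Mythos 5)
Your proposal is correct and recovers the same high-level structure as the paper: sandwich $\hat{\delta}_t^\mathfrak{a}(E)$ between $\delta_t(S,E)$ and a constant multiple of it (with $S$ the sum of the simples), then propagate this to the level of entropies using Lemma~\ref{dhkk-lem}. Both the upper bound (your $n\cdot\delta_t(S,E)$, which is the paper's Proposition~\ref{t-upper} specialised to $D=S$ together with $\hat{\delta}_t^\mathfrak{a}(S)=n$) and the lower bound (your composition-series refinement, which is the paper's Lemma~\ref{t-lower}) match the paper's steps, as does the concluding manipulation with $\delta_t(S,G)$ and $\delta_t(G,S)$ showing that the three sequences $\frac{1}{N}\log\delta_t(G,\Phi^N(G))$, $\frac{1}{N}\log\delta_t(S,\Phi^N(G))$, and $\frac{1}{N}\log\hat{\delta}_t^\mathfrak{a}(\Phi^N(G))$ share $\liminf$ and $\limsup$.

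The one place where you genuinely diverge from the paper is the triangle inequality $\hat{\delta}_t^\mathfrak{a}(E)\le\hat{\delta}_t^\mathfrak{a}(D)+\hat{\delta}_t^\mathfrak{a}(F)$, which is Proposition~\ref{t-ex-prop}. You prove it by rewriting $\hat{\delta}_t^\mathfrak{a}$ in terms of the cohomology objects $H^k_\mathfrak{a}$, invoking that $H^\bullet_\mathfrak{a}$ is a cohomological functor, and using exactness at $H^k_\mathfrak{a}(E)$ plus additivity of $\lambda_{\mathcal{H}_\mathfrak{a}}$ to get $\lambda(H^k_\mathfrak{a}(E))\le\lambda(H^k_\mathfrak{a}(D))+\lambda(H^k_\mathfrak{a}(F))$. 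This is shorter and more conceptual than the paper's proof, which instead concatenates the two t-filtrations and then runs a three-way case analysis (strictly separated degrees, equal degrees, adjacent degrees) to normalise the concatenation into a genuine t-filtration. The filtration-juggling argument has the virtue of running in parallel with the co-t-structure case (Proposition~\ref{c-ex-prop}), where there is no cohomology functor and hence no long exact sequence, so the paper's uniform presentation has a reason; but your long-exact-sequence argument is the more economical route when a t-structure is genuinely present. A small slip worth noting: with the paper's conventions the reformulation should read $\hat{\delta}_t^\mathfrak{a}(E)=\sum_k\lambda_{\mathcal{H}_\mathfrak{a}}(H^{-k}_\mathfrak{a}(E))e^{kt}$ rather than $H^{k}_\mathfrak{a}$; this does not affect any of your conclusions (your shift formula $\hat{\delta}_t^\mathfrak{a}(\Sigma E)=e^t\hat{\delta}_t^\mathfrak{a}(E)$ is right), but the sign should be kept straight when writing the final version.
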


When $\mathcal{D}$ admits a bounded t-structure with finite heart, Theorem \ref{t-thm-1} provides an easier way to compute the categorical entropy.
Indeed it allows us to compute the categorical entropy using an essentially uniquely determined filtration (i.e., a t-filtration) of $\Phi^N(G)$ instead of all possible filtrations of any objects having $\Phi^N(G)$ as a direct summand.

Many parts of the proof of Theorem \ref{t-thm-1} is motivated by that of \cite[Theorem 3.5]{Ike} as we will see shortly.
We shall prove Theorem \ref{t-thm-1} in the following three steps:

(Step 1)
$\displaystyle \hat{h}_t^\mathfrak{a}(\Phi) = \limsup_{N \to \infty} \frac{1}{N} \log \hat{\delta}_t^\mathfrak{a}(\Phi^N(G))$ for any split-generator $G$ of $\mathcal{D}$.

(Step 2)
$\hat{h}_t^\mathfrak{a}(\Phi) = h_t(\Phi)$.

(Step 3)
The limit of the sequence $\left\{ \frac{1}{N} \log \hat{\delta}_t^\mathfrak{a}(\Phi^N(G)) \right\}_{N=1}^\infty$ exists for any split-generator $G$ of $\mathcal{D}$.

The proof will be given at the end of this section after showing some lemmas and propositions.
Along the way, we frequently use the following facts which can be checked easily using the octahedral axiom.

\begin{lem}\label{ex-lem-1}
Suppose we have two exact triangles $A \to B \to C \overset{g}{\to} \Sigma A$ and $E \overset{f}{\to} C \to F \to \Sigma E$.
Then there exists a diagram
\begin{equation*}
\begin{tikzcd}[column sep=tiny]
A \ar[rr] & & * \ar[dl] \ar[rr] & & B \ar[dl]\\
& E \ar[ul,dashed,"g \circ f"] & & F \ar[ul,dashed] &
\end{tikzcd}
\end{equation*}
consisting of exact triangles.
\end{lem}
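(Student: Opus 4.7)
The plan is to obtain the desired diagram as a single invocation of the octahedral axiom applied to the composition $g \circ f \colon E \to \Sigma A$. Rotating the first given triangle yields $B \to C \xrightarrow{g} \Sigma A \to \Sigma B$, exhibiting $\Sigma B$ as the cone of $g$, while the second given triangle already exhibits $F$ as the cone of $f$. Letting $Y$ denote a choice of cone for the composition $g \circ f$, the octahedral axiom then furnishes both the triangle
\begin{equation*}
E \xrightarrow{g \circ f} \Sigma A \to Y \to \Sigma E
\end{equation*}
for the composition and the fourth triangle
\begin{equation*}
F \to Y \to \Sigma B \to \Sigma F
\end{equation*}
relating the three cones.

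Next I would set $X \coloneqq \Sigma^{-1} Y$ to produce the middle object in the diagram. Rotating the composition triangle backwards gives the exact triangle $A \to X \to E \xrightarrow{g \circ f} \Sigma A$, which is exactly the left triangle in the claimed diagram with the correct connecting morphism $g \circ f$ as labelled. Rotating the fourth triangle backwards and applying $\Sigma^{-1}$ gives $X \to B \to F \to \Sigma X$, which is the right triangle. Assembling these two triangles along the common vertex $X$ reproduces the depicted diagram verbatim.

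The main obstacle is purely bookkeeping: one needs to keep track of which cone is which and to apply the rotations in the correct direction so that the dashed connecting morphism in the leftmost triangle agrees on the nose with $g \circ f$ (and not, for example, its negative). Beyond these sign and shift conventions, the argument is a direct application of the octahedral axiom and uses no further input, which is consistent with the remark preceding the statement that the lemma can be checked easily in this way.
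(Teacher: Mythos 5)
Your argument is correct and is precisely the octahedral-axiom verification the paper has in mind: applying the axiom to the composition $E \overset{f}{\to} C \overset{g}{\to} \Sigma A$, whose two partial cones are $F$ and $\Sigma B$, and then rotating the triangle over $g\circ f$ and the fourth triangle $F \to Y \to \Sigma B \to \Sigma F$ to land on the common vertex $X = \Sigma^{-1}Y$. The paper gives no proof and simply states that the lemma "can be checked easily using the octahedral axiom," so your writeup fills in exactly the intended calculation.
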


\begin{lem}\label{ex-lem-2}
Suppose we have a diagram
\begin{equation*}
\begin{tikzcd}[column sep=tiny]
A \ar[rr] & & * \ar[dl,"g"] \ar[rr] & & B \ar[dl]\\
& E \ar[ul,dashed] & & F \ar[ul,dashed,"f"] &
\end{tikzcd}
\end{equation*}
consisting of exact triangles.
Then there exists an exact triangle $A \to B \to C \to \Sigma A$ where $C$ fits into the exact triangle $E \to C \to F \overset{\Sigma g \circ f}{\longrightarrow} \Sigma E$.
\end{lem}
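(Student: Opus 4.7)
The plan is to recognize Lemma~\ref{ex-lem-2} as a direct application of the octahedral axiom to a composition, proceeding in the converse direction to Lemma~\ref{ex-lem-1}. Writing $X$ for the middle object (the $*$ in the input diagram), the hypotheses unpack to the two exact triangles
\begin{equation*}
A \to X \xrightarrow{g} E \to \Sigma A \quad\text{and}\quad X \to B \to F \xrightarrow{f} \Sigma X,
\end{equation*}
together with the composable pair $A \to X \to B$ whose cone we are going to analyze.

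Next, I would apply the octahedral axiom to this composite $A \to X \to B$. It produces an object $C$ together with an exact triangle $A \to B \to C \to \Sigma A$ (the cone of the composite) and a further exact triangle $E \to C \to F \to \Sigma E$ relating the three cones, which is exactly the data the conclusion asks for. The only point requiring a little care is to match the connecting morphism $F \to \Sigma E$ produced by the octahedron with the claimed composite $\Sigma g \circ f$. By the compatibility clause of the octahedral axiom this morphism factors as $F \xrightarrow{f} \Sigma X \xrightarrow{\Sigma g} \Sigma E$, where $f$ is the connecting map of the second input triangle and $g$ is the middle map of the first; the composite is $\Sigma g \circ f$ on the nose.

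The main obstacle, such as it is, is purely notational: one has to keep careful track of which arrow in each schematic triangle plays the role of the connecting morphism (the paper's convention reserves dashed arrows for the degree-shifting maps) and align this with a standard formulation of the octahedral axiom. No genuine computation is involved, in line with the paper's remark after Lemma~\ref{op-lem} that this type of statement can be easily checked using the octahedral axiom.
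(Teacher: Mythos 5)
Your proof is correct and takes the same route the paper implicitly indicates: the statement is exactly the octahedral axiom (TR4) applied to the composite $A \to X \to B$, and the identification of the connecting morphism $F \to \Sigma E$ as $\Sigma g \circ f$ is the standard compatibility clause of that axiom. The paper gives no further detail, simply remarking that the lemma "can be checked easily using the octahedral axiom," so your write-up fills in precisely the expected argument.
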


\begin{lem}
Consider a filtration
\begin{equation*}
\begin{tikzcd}[column sep=tiny]
0 \ar[rr] & & * \ar[dl] \ar[rr] & & * \ar[dl] & \cdots & * \ar[rr] & & E \ar[dl]\\
& E_1 \ar[ul,dashed] & & E_2 \ar[ul,dashed] & & & & E_m \ar[ul,dashed] &
\end{tikzcd}
\end{equation*}
of $E \in \mathcal{D}$.
If there are $E'_1,\dots,E'_m \in \mathcal{D}$ such that $E'_i \cong E_i$ for every $1 \leq i \leq m$ then there exists a filtration of $E$ of the form
\begin{equation*}
\begin{tikzcd}[column sep=tiny]
0 \ar[rr] & & * \ar[dl] \ar[rr] & & * \ar[dl] & \cdots & * \ar[rr] & & E \ar[dl]\\
& E'_1 \ar[ul,dashed] & & E'_2 \ar[ul,dashed] & & & & E'_m \ar[ul,dashed] &
\end{tikzcd}.
\end{equation*}
\end{lem}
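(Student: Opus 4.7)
The plan is to transport the given filtration along the prescribed isomorphisms $\phi_i : E'_i \xrightarrow{\sim} E_i$, triangle by triangle, while keeping the intermediate vertices unchanged. First I would unpack what a filtration actually is: setting $A_0 \coloneqq 0$, $A_m \coloneqq E$, and letting $A_1, \ldots, A_{m-1}$ denote the unnamed intermediate vertices (the $\ast$'s appearing in the diagram), the filtration is precisely the data of exact triangles
\begin{equation*}
A_{i-1} \xrightarrow{f_i} A_i \xrightarrow{g_i} E_i \xrightarrow{h_i} \Sigma A_{i-1}
\end{equation*}
for $1 \leq i \leq m$.

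Next, for each $i$ I would form the candidate triangle
\begin{equation*}
A_{i-1} \xrightarrow{f_i} A_i \xrightarrow{\phi_i^{-1} \circ g_i} E'_i \xrightarrow{h_i \circ \phi_i} \Sigma A_{i-1}.
\end{equation*}
This is related to the original triangle by the triple $(\mathrm{id}_{A_{i-1}}, \mathrm{id}_{A_i}, \phi_i^{-1})$, which is an isomorphism of triangles since $\phi_i^{-1} \circ g_i = \phi_i^{-1} \circ g_i$ and $h_i = h_i \circ \phi_i \circ \phi_i^{-1}$ commute on the nose. By the axiom that the class of exact triangles in $\mathcal{D}$ is closed under triangle isomorphism, the new triangle is itself exact.

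Retaining the same intermediate objects $A_1, \ldots, A_{m-1}$ and concatenating these new exact triangles reassembles into the desired filtration of $E$ with factors $E'_1, \ldots, E'_m$. There is no substantive obstacle here; the statement is essentially the observation that the third vertex of an exact triangle is determined only up to (non-canonical) isomorphism, so replacing each factor by an isomorphic object leaves an admissible filtration of the same object $E$. I expect this entire argument to fit in three or four lines of LaTeX in the paper.
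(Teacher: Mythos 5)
Your proof is correct, and it is the natural argument: the paper states this lemma without proof (grouping it with facts labeled as easy consequences of the triangulated axioms), and what you write is exactly what one would say. The key observation — that $(\mathrm{id}_{A_{i-1}},\mathrm{id}_{A_i},\phi_i^{-1})$ is an isomorphism from the given exact triangle to the candidate one, and that the class of exact triangles is closed under isomorphism of triangles — is precisely the point, and keeping the intermediate vertices $A_1,\dots,A_{m-1}$ fixed ensures the modified triangles still concatenate into a filtration of $E$. Note that, despite the paper's blanket remark, this particular lemma does not actually need the octahedral axiom, only closure of exact triangles under isomorphism, as your argument makes clear.
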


The following is the key proposition to prove Theorem \ref{t-thm-1}.

\begin{prop}\label{t-ex-prop}
Let $\mathfrak{a}$ be a bounded t-structure on $\mathcal{D}$ with finite heart.
If there is an exact triangle $D \to E \to F \to \Sigma D$ in $\mathcal{D}$ then
\begin{equation*}
\hat{\delta}_t^\mathfrak{a}(E) \leq \hat{\delta}_t^\mathfrak{a}(D) + \hat{\delta}_t^\mathfrak{a}(F).
\end{equation*}
\end{prop}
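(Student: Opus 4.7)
The plan is to reinterpret $\hat{\delta}_t^\mathfrak{a}$ as a weighted sum of lengths of cohomology objects and then exploit the long exact sequence of cohomology attached to the given triangle.

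The starting observation is cohomological. By the remark following Proposition \ref{t-filt}, the t-filtration of $E$ identifies $E_i \cong H_\mathfrak{a}^{k_i}(E)$, while $H_\mathfrak{a}^k(E) = 0$ for $k \notin \{k_1,\dots,k_m\}$. Thus the definition of $\hat{\delta}_t^\mathfrak{a}$ can be rewritten as
\[
\hat{\delta}_t^\mathfrak{a}(E) \;=\; \sum_{k \in \mathbf{Z}} \lambda_{\mathcal{H}_\mathfrak{a}}\!\bigl(H_\mathfrak{a}^k(E)\bigr)\, e^{kt},
\]
and analogously for $D$ and $F$. The proposition therefore reduces to establishing the pointwise inequality
\[
\lambda_{\mathcal{H}_\mathfrak{a}}\!\bigl(H_\mathfrak{a}^k(E)\bigr) \;\leq\; \lambda_{\mathcal{H}_\mathfrak{a}}\!\bigl(H_\mathfrak{a}^k(D)\bigr) + \lambda_{\mathcal{H}_\mathfrak{a}}\!\bigl(H_\mathfrak{a}^k(F)\bigr)
\]
for every $k \in \mathbf{Z}$; the submultiplicative-style bound then follows by multiplying through by the positive weight $e^{kt}$ and summing over $k$.

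For the pointwise bound, apply the cohomology functor $H_\mathfrak{a}^k$ to the exact triangle $D \to E \to F \to \Sigma D$. Since $H_\mathfrak{a}^k$ is cohomological, this produces a long exact sequence in $\mathcal{H}_\mathfrak{a}$ whose portion in degree $k$ reads
\[
H_\mathfrak{a}^k(D) \xrightarrow{\alpha} H_\mathfrak{a}^k(E) \xrightarrow{\beta} H_\mathfrak{a}^k(F),
\]
with $\operatorname{im}(\alpha) = \ker(\beta)$. Denote this common object by $I$. Then $I$ is a quotient of $H_\mathfrak{a}^k(D)$ (hence $\lambda_{\mathcal{H}_\mathfrak{a}}(I) \leq \lambda_{\mathcal{H}_\mathfrak{a}}(H_\mathfrak{a}^k(D))$), while $H_\mathfrak{a}^k(E)/I = \operatorname{im}(\beta)$ is a subobject of $H_\mathfrak{a}^k(F)$ (hence $\lambda_{\mathcal{H}_\mathfrak{a}}(H_\mathfrak{a}^k(E)/I) \leq \lambda_{\mathcal{H}_\mathfrak{a}}(H_\mathfrak{a}^k(F))$). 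Length is additive on the short exact sequence $0 \to I \to H_\mathfrak{a}^k(E) \to H_\mathfrak{a}^k(E)/I \to 0$ in the length category $\mathcal{H}_\mathfrak{a}$, so combining these gives the desired bound.

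There is no serious obstacle in this argument: the only ingredients are the cohomological description of $\hat{\delta}_t^\mathfrak{a}$, the standard fact that $H_\mathfrak{a}^k$ is cohomological (hence sends triangles to long exact sequences), and Jordan--H\"older-type additivity and monotonicity of length in an abelian length category. The only point worth verifying carefully is the cohomological formula for $\hat{\delta}_t^\mathfrak{a}$, which is immediate from the uniqueness of the t-filtration and the identification $E_i \cong H_\mathfrak{a}^{k_i}(E)$.
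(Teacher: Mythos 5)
Your proof is correct, and it takes a genuinely different and more streamlined route than the paper's. The paper splices the t-filtrations of $D$ and $F$ into a filtration of $E$ via Lemma~\ref{ex-lem-1}, then runs a three-case normalization (distinguishing $k''_i - k''_{i+1} < -1$, $= 0$, $= -1$) to convert the result into an honest t-filtration while tracking the effect on $\hat{\delta}_t^\mathfrak{a}$ at each swap or merge. You instead rewrite $\hat{\delta}_t^\mathfrak{a}$ as a weighted sum of $\lambda_{\mathcal{H}_\mathfrak{a}}(H_\mathfrak{a}^k(-))$, invoke that $H_\mathfrak{a}^0$ is cohomological so the triangle yields a long exact sequence in $\mathcal{H}_\mathfrak{a}$, and conclude by additivity and monotonicity of length in the abelian length category; this reduces the whole argument to the pointwise inequality $\lambda(H_\mathfrak{a}^k(E)) \le \lambda(H_\mathfrak{a}^k(D)) + \lambda(H_\mathfrak{a}^k(F))$, which is both shorter and more conceptual. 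Two small remarks. First, the paper's Remark after Proposition~\ref{t-filt} states $E_i \cong H_\mathfrak{a}^{k_i}(E)$, but the paper itself elsewhere uses $H_\mathfrak{a}^{-k_i}$ (e.g.\ in Case~3 of its proof and in the proof of Theorem~\ref{t-thm-2}), so the cohomological rewriting should strictly read $\hat{\delta}_t^\mathfrak{a}(E) = \sum_k \lambda_{\mathcal{H}_\mathfrak{a}}(H_\mathfrak{a}^k(E)) e^{-kt}$; this is immaterial to your argument since the sign in the exponent plays no role. Second, the paper's heavier filtration-manipulation approach has the virtue of transporting almost verbatim to the co-t-structure analogue (Proposition~\ref{c-ex-prop}), where there is no cohomological truncation functor and hence no long exact sequence to invoke; your argument does not generalize to that setting, which is likely why the paper chose the other route.
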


\begin{proof}
Take t-filtrations
\begin{equation*}
\begin{tikzcd}[column sep=tiny]
0 \ar[rr] & & * \ar[dl] \ar[rr] & & * \ar[dl] & \cdots & * \ar[rr] & & D \ar[dl]\\
& \Sigma^{k_1}D_1 \ar[ul,dashed] & & \Sigma^{k_2}D_2 \ar[ul,dashed] & & & & \Sigma^{k_m}D_m \ar[ul,dashed] &
\end{tikzcd}
\end{equation*}
and
\begin{equation*}
\begin{tikzcd}[column sep=tiny]
0 \ar[rr] & & * \ar[dl] \ar[rr] & & * \ar[dl] & \cdots & * \ar[rr] & & F \ar[dl]\\
& \Sigma^{k'_1}F_1 \ar[ul,dashed] & & \Sigma^{k'_2}F_2 \ar[ul,dashed] & & & & \Sigma^{k'_{m'}}F_{m'} \ar[ul,dashed] &
\end{tikzcd}
\end{equation*}
with respect to $\mathfrak{a}$.
Inserting these filtrations into the exact triangle $D \to E \to F \to \Sigma D$ using Lemma \ref{ex-lem-1}, we get a filtration
\begin{equation*}
\begin{tikzcd}[column sep=tiny]
0 \ar[rr] & & * \ar[dl] & \cdots & * \ar[rr] & & * \ar[dl] \ar[rr] & & * \ar[dl] & \cdots & * \ar[rr] &  & E \ar[dl]\\
& \Sigma^{k_1}D_1 \ar[ul,dashed] & & & & \Sigma^{k_m}D_m \ar[ul,dashed] & & \Sigma^{k'_1}F_1 \ar[ul,dashed] & & & & \Sigma^{k'_{m'}}F_{m'} \ar[ul,dashed] &
\end{tikzcd}.
\end{equation*}
For convenience, we write it as
\begin{equation}\label{not-t-filt}
\begin{tikzcd}[column sep=tiny]
0 \ar[rr] & & * \ar[dl] & \cdots & * \ar[rr] & & * \ar[dl] \ar[rr] & & * \ar[dl] & \cdots & * \ar[rr] &  & E \ar[dl]\\
& \Sigma^{k''_1}E_1 \ar[ul,dashed] & & & & \Sigma^{k''_m}E_m \ar[ul,dashed] & & \Sigma^{k''_{m+1}}E_{m+1} \ar[ul,dashed] & & & & \Sigma^{k''_{m''}}E_{m''} \ar[ul,dashed] &
\end{tikzcd}
\end{equation}
where $E_1 \coloneqq D_1,\dots,E_m \coloneqq D_m,E_{m+1} \coloneqq F_1,\dots,E_{m''} \coloneqq F_{m'}$ and $k''_1 \coloneqq k_1,\dots,k''_m \coloneqq k_m,k''_{m+1} \coloneqq k_1',\dots,k''_{m''} \coloneqq k'_{m'}$.

In general, the filtration \eqref{not-t-filt} is not a t-filtration, i.e., there may be some $1 \leq i < m''$ such that $k''_i \leq k''_{i+1}$.
However we can produce a t-filtration from the filtration \eqref{not-t-filt} through the following procedure.
Suppose we actually have $k''_i \leq k''_{i+1}$ for some $1 \leq i < m''$ and consider the following part of the filtration \eqref{not-t-filt}
\begin{equation}\label{not-t-part}
\begin{tikzcd}[column sep=tiny]
A \ar[rr] & & B \ar[dl,"g"] \ar[rr] & & C \ar[dl]\\
& \Sigma^{k''_i}E_i \ar[ul,dashed] & & \Sigma^{k''_{i+1}}E_{i+1} \ar[ul,dashed,"f"] &
\end{tikzcd}.
\end{equation}

(Case 1)
Suppose $k''_i - k''_{i+1} < -1$.
Then $\Sigma g \circ f \in \mathrm{Hom}(\Sigma^{k''_{i+1}}E_{i+1},\Sigma^{k''_i+1}E_i) = \mathrm{Hom}(E_{i+1},\Sigma^{k''_i-k''_{i+1}+1}E_i) = 0$ since $E_i,E_{i+1} \in \mathcal{H}_\mathfrak{a}$ and $k''_i-k''_{i+1}+1<0$.
Therefore, by Lemma \ref{ex-lem-2}, we obtain an exact triangle $A \to C \to \Sigma^{k''_i}E_i \oplus \Sigma^{k''_{i+1}}E_{i+1} \to \Sigma A$ and so we can replace \eqref{not-t-part} by
\begin{equation*}
\begin{tikzcd}[column sep=tiny]
A \ar[rr] & & * \ar[dl] \ar[rr] & & C \ar[dl]\\
& \Sigma^{k''_{i+1}}E_{i+1} \ar[ul,dashed] & & \Sigma^{k''_i}E_i \ar[ul,dashed] &
\end{tikzcd}
\end{equation*}
using Lemma \ref{ex-lem-1}.

(Case 2)
Suppose $k''_i - k''_{i+1} = 0$.
Then, by Lemma \ref{ex-lem-2}, we obtain an exact triangle $A \to C \to \Sigma^{k''_i}E' \to \Sigma A$ where $E'$ fits into the exact triangle $E_i \to E' \to E_{i+1} \to \Sigma E_i$.
Since $\mathcal{H}_\mathfrak{a}$ is extension-closed, we have $E' \in \mathcal{H}_\mathfrak{a}$.
Thus we can replace \eqref{not-t-part} by
\begin{equation*}
\begin{tikzcd}[column sep=tiny]
A \ar[rr] & & C \ar[dl]\\
& \Sigma^{k''_i}E' \ar[ul,dashed] &
\end{tikzcd}.
\end{equation*}

Note that, by the short exact sequence $0 \to E_i \to E' \to E_{i+1} \to 0$ in $\mathcal{H}_\mathfrak{a}$, we have
\begin{equation*}
\lambda_{\mathcal{H}_\mathfrak{a}}(E') = \lambda_{\mathcal{H}_\mathfrak{a}}(E_i) + \lambda_{\mathcal{H}_\mathfrak{a}}(E_{i+1}).
\end{equation*}

(Case 3)
Suppose $k''_i - k''_{i+1} = -1$.
Then, by Lemma \ref{ex-lem-2}, we obtain an exact triangle $A \to C \to E' \to \Sigma A$ where $E'$ fits into the exact triangle $\Sigma^{k''_i}E_i \to E' \to \Sigma^{k''_i+1}E_{i+1} \to \Sigma^{k''_i+1}E_i$.
By taking the $\mathfrak{a}$-cohomology, we see that $H_\mathfrak{a}^{-j}(E') \neq 0$ if and only if $j=k''_i$ or $k''_i+1 (= k''_{i+1})$.
Therefore $E'$ has the following t-filtration
\begin{equation*}
\begin{tikzcd}[column sep=tiny]
0 \ar[rr] & & * \ar[dl] \ar[rr] & & E' \ar[dl]\\
& \Sigma^{k''_i+1}H_\mathfrak{a}^{-k''_i-1}(E') \ar[ul,dashed] & & \Sigma^{k''_i}H_\mathfrak{a}^{-k''_i}(E') \ar[ul,dashed] &
\end{tikzcd}
\end{equation*}
and so we can replace \eqref{not-t-part} by
\begin{equation*}
\begin{tikzcd}[column sep=tiny]
A \ar[rr] & & * \ar[dl] \ar[rr] & & C \ar[dl]\\
& \Sigma^{k''_i+1}H_\mathfrak{a}^{-k''_i-1}(E') \ar[ul,dashed] & & \Sigma^{k''_i}H_\mathfrak{a}^{-k''_i}(E') \ar[ul,dashed] &
\end{tikzcd}
\end{equation*}
using Lemma \ref{ex-lem-1}.

On the other hand, we have an exact sequence
\begin{equation*}
0 \to H_\mathfrak{a}^{-k''_i-1}(E') \overset{g}{\to} E_{i+1} \to E_i \overset{f}{\to} H_\mathfrak{a}^{-k''_i}(E') \to 0
\end{equation*}
in $\mathcal{H}_\mathfrak{a}$ which can be split into two short exact sequences
\begin{gather*}
0 \to \mathrm{Ker}(f) \to E_i \overset{f}{\to} H_\mathfrak{a}^{-k''_i}(E') \to 0,\\
0 \to H_\mathfrak{a}^{-k''_i-1}(E') \overset{g}{\to} E_{i+1} \to \mathrm{Coker}(g) \to 0.
\end{gather*}
This implies that
\begin{gather*}
\lambda_{\mathcal{H}_\mathfrak{a}}(E_i) = \lambda_{\mathcal{H}_\mathfrak{a}}(H_\mathfrak{a}^{-k''_i}(E')) + \lambda_{\mathcal{H}_\mathfrak{a}}(\mathrm{Ker}(f)) \geq \lambda_{\mathcal{H}_\mathfrak{a}}(H_\mathfrak{a}^{-k''_i}(E')),\\
\lambda_{\mathcal{H}_\mathfrak{a}}(E_{i+1}) = \lambda_{\mathcal{H}_\mathfrak{a}}(H_\mathfrak{a}^{-k''_i-1}(E')) + \lambda_{\mathcal{H}_\mathfrak{a}}(\mathrm{Coker}(g)) \geq \lambda_{\mathcal{H}_\mathfrak{a}}(H_\mathfrak{a}^{-k''_i-1}(E')).
\end{gather*}

The assertion then follows from the above arguments because (Case 1) and (Case 2) do not change $\sum_{i=1}^{m''} \lambda_{\mathcal{H}_\mathfrak{a}}(E_i) e^{k''_i t}$ and (Case 3) makes it smaller.
\end{proof}

The following is an analogue of \cite[Proposition 3.4]{Ike}.

\begin{prop}\label{t-upper}
Let $\mathfrak{a}$ be a bounded t-structure on $\mathcal{D}$ with finite heart.
Then
\begin{equation*}
\hat{\delta}_t^\mathfrak{a}(E) \leq \hat{\delta}_t^\mathfrak{a}(D) \delta_t(D,E)
\end{equation*}
for any objects $D,E \in \mathcal{D}$.
\end{prop}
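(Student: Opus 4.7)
The plan is to reduce the inequality to three elementary properties of $\hat{\delta}_t^\mathfrak{a}$ and then apply them to any filtration realizing the complexity $\delta_t(D,E)$. Specifically, I want to show:
\begin{enumerate}
\item[(a)] $\hat{\delta}_t^\mathfrak{a}(\Sigma^k E) = e^{kt}\,\hat{\delta}_t^\mathfrak{a}(E)$ for every $k \in \mathbf{Z}$ and every $E \in \mathcal{D}$;
\item[(b)] $\hat{\delta}_t^\mathfrak{a}(E \oplus E') = \hat{\delta}_t^\mathfrak{a}(E) + \hat{\delta}_t^\mathfrak{a}(E')$, and in particular $\hat{\delta}_t^\mathfrak{a}(E) \leq \hat{\delta}_t^\mathfrak{a}(E \oplus E')$;
\item[(c)] if $A_1 \to A_2 \to \cdots \to A_m$ is a filtration in $\mathcal{D}$ with cones $\Sigma^{k_i}D$, then $\hat{\delta}_t^\mathfrak{a}(A_m) \leq \hat{\delta}_t^\mathfrak{a}(D) \sum_{i=1}^m e^{k_i t}$.
\end{enumerate}

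For (a), I would simply note that if a t-filtration of $E$ has pieces $\Sigma^{k_i} E_i$, then shifting produces a t-filtration of $\Sigma^k E$ with pieces $\Sigma^{k+k_i} E_i$, and the factor $e^{kt}$ pops out of the defining sum. For (b), I would observe that the cohomology functors $H_\mathfrak{a}^{-j}$ preserve direct sums, so the t-filtration of $E \oplus E'$ has pieces $\Sigma^j(H_\mathfrak{a}^{-j}(E) \oplus H_\mathfrak{a}^{-j}(E'))$ with $j$ running over the union of the relevant degrees, and additivity then follows from the additivity of length $\lambda_{\mathcal{H}_\mathfrak{a}}$ on direct sums in the length category $\mathcal{H}_\mathfrak{a}$. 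For (c), I would induct on $m$: the base case $m=1$ is (a), and the inductive step uses the exact triangle $A_{i-1} \to A_i \to \Sigma^{k_i}D \to \Sigma A_{i-1}$ together with Proposition \ref{t-ex-prop} to get $\hat{\delta}_t^\mathfrak{a}(A_i) \leq \hat{\delta}_t^\mathfrak{a}(A_{i-1}) + e^{k_i t}\,\hat{\delta}_t^\mathfrak{a}(D)$.

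With (a)--(c) in hand, the proposition is immediate. Given any filtration witnessing a value close to $\delta_t(D,E)$, say with cones $\Sigma^{k_1}D,\dots,\Sigma^{k_m}D$ and top object $E \oplus E'$ for some $E' \in \mathcal{D}$, combining (b) and (c) gives
\begin{equation*}
\hat{\delta}_t^\mathfrak{a}(E) \leq \hat{\delta}_t^\mathfrak{a}(E \oplus E') \leq \hat{\delta}_t^\mathfrak{a}(D) \sum_{i=1}^m e^{k_i t}.
\end{equation*}
Taking the infimum over all such filtrations yields $\hat{\delta}_t^\mathfrak{a}(E) \leq \hat{\delta}_t^\mathfrak{a}(D)\,\delta_t(D,E)$, as desired. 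The case $\delta_t(D,E) = \infty$ (i.e.\ $E \notin \mathrm{thick}(D)$) is trivial, and if $F = 0$ then $\hat{\delta}_t^\mathfrak{a}(F) = 0$ so the bound holds vacuously.

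I do not anticipate any serious obstacle: the heart of the argument is the subadditivity Proposition \ref{t-ex-prop}, which is already in hand. The only mildly delicate point is the additivity statement (b), which requires checking that the degrees appearing in the t-filtration of a direct sum are exactly the union of the degrees of the summands and that $\lambda_{\mathcal{H}_\mathfrak{a}}$ is additive on direct sums; both are standard facts about length categories and cohomology functors of a bounded t-structure.
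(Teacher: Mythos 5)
Your proof is correct and takes essentially the same approach as the paper: reduce to a filtration realizing $\delta_t(D,E)$ up to $\varepsilon$, then repeatedly apply Proposition \ref{t-ex-prop} to the exact triangles in that filtration. Your auxiliary facts (a)--(c) are exactly what the paper's proof uses implicitly (the paper writes the chain of inequalities $\hat{\delta}_t^\mathfrak{a}(E) \leq \hat{\delta}_t^\mathfrak{a}(E \oplus E') \leq \sum_i \hat{\delta}_t^\mathfrak{a}(\Sigma^{k_i}D) = \hat{\delta}_t^\mathfrak{a}(D)\sum_i e^{k_i t}$), just packaged as separate steps; your (b) is a little stronger than needed, since only $\hat{\delta}_t^\mathfrak{a}(E) \leq \hat{\delta}_t^\mathfrak{a}(E\oplus E')$ is required, but the full additivity is correct for the reason you give.
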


\begin{proof}
If $E \not\in \mathrm{thick}(D)$ then $\delta_t(D,E) = \infty$ so the inequality obviously holds.
Now assume that $E \in \mathrm{thick}(D)$.
Then, for any $\varepsilon>0$, there is a filtration
\begin{equation}\label{t-filt-prop}
\begin{tikzcd}[column sep=tiny]
0 \ar[rr] & & * \ar[dl] \ar[rr] & & * \ar[dl] & \cdots & * \ar[rr] & & E \oplus E' \ar[dl]\\
& \Sigma^{k_1}D \ar[ul,dashed] & & \Sigma^{k_2}D \ar[ul,dashed] & & & & \Sigma^{k_m}D \ar[ul,dashed] &
\end{tikzcd}
\end{equation}
such that
\begin{equation*}
\sum_{i=1}^m e^{k_it} < \delta_t(D,E) + \varepsilon
\end{equation*}
by the definition of $\delta_t(D,E)$.
Using this inequality, we obtain
\begin{equation*}
\hat{\delta}_t^\mathfrak{a}(E) \leq \hat{\delta}_t^\mathfrak{a}(E \oplus E') \leq \sum_{i=1}^m \hat{\delta}_t^\mathfrak{a}(\Sigma^{k_i}D) = \hat{\delta}_t^\mathfrak{a}(D) \sum_{i=1}^m e^{k_it} < \hat{\delta}_t^\mathfrak{a}(D) \delta_t(D,E) + \varepsilon \cdot \hat{\delta}_t^\mathfrak{a}(D)
\end{equation*}
where the second inequality can be obtained by applying Proposition \ref{t-ex-prop} to every exact triangle in the filtration \eqref{t-filt-prop}.
Since $\varepsilon>0$ is arbitrary, the assertion follows.
\end{proof}

The following is an analogue of \cite[Lemma 3.12]{Ike}.

\begin{lem}\label{t-lower}
Let $\mathfrak{a}$ be a bounded t-structure on $\mathcal{D}$ with finite heart.
Let $S_1,\dots,S_n \in \mathcal{H}_\mathfrak{a}$ be a complete set of pairwise non-isomorphic simple objects and set $S \coloneqq S_1 \oplus \cdots \oplus S_n$.
Then for any $E \in \mathcal{D}$, we have
\begin{equation*}
\hat{\delta}_t^\mathfrak{a}(E) \geq \delta_t(S,E).
\end{equation*}
\end{lem}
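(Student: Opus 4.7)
The plan is to build, for each $E \in \mathcal{D}$, an explicit filtration of $E \oplus F'$ whose factors are all shifts of $S$ and whose total $t$-weight equals $\hat{\delta}_t^\mathfrak{a}(E)$; this will bound $\delta_t(S,E)$ from above by $\hat{\delta}_t^\mathfrak{a}(E)$.

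First I would take a t-filtration of $E$ as in Proposition \ref{t-filt}, with factors $\Sigma^{k_i} E_i$, $E_i \in \mathcal{H}_\mathfrak{a}$, $k_1 > \cdots > k_m$, so that $\hat{\delta}_t^\mathfrak{a}(E) = \sum_{i=1}^m \lambda_{\mathcal{H}_\mathfrak{a}}(E_i)\, e^{k_i t}$. Since $\mathcal{H}_\mathfrak{a}$ is a length category, each $E_i$ admits a composition series $0 = E_i^0 \subset \cdots \subset E_i^{\ell_i} = E_i$ of length $\ell_i = \lambda_{\mathcal{H}_\mathfrak{a}}(E_i)$ whose successive quotients are simple, hence isomorphic to objects $S_{j(i,p)}$ in the given list. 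Every short exact sequence in $\mathcal{H}_\mathfrak{a}$ yields an exact triangle in $\mathcal{D}$, so iterated use of Lemma \ref{ex-lem-1} refines the $i$-th step $A_{i-1} \to A_i \to \Sigma^{k_i} E_i \to \Sigma A_{i-1}$ of the t-filtration into $\ell_i$ substeps with factors $\Sigma^{k_i} S_{j(i,p)}$. This produces a filtration $0 = B_0 \to B_1 \to \cdots \to B_L = E$ with $L = \sum_i \ell_i$ whose $s$-th factor has the form $\Sigma^{\kappa_s} S_{\jmath_s}$, where the exponent $\kappa_s$ equals $k_i$ on exactly $\ell_i$ consecutive indices.

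To convert these simple factors into shifts of the full split-generator $S$, set $X_s \coloneqq \Sigma^{\kappa_s} \bigoplus_{j \ne \jmath_s} S_j$ and $Y_s \coloneqq X_1 \oplus \cdots \oplus X_s$, with $Y_0 = 0$. Taking the direct sum of the exact triangle $B_{s-1} \to B_s \to \Sigma^{\kappa_s} S_{\jmath_s} \to \Sigma B_{s-1}$ with the split triangle $Y_{s-1} \to Y_s \to X_s \to 0$ and using $\Sigma^{\kappa_s} S_{\jmath_s} \oplus X_s \cong \Sigma^{\kappa_s} S$ yields an exact triangle
\[
B_{s-1} \oplus Y_{s-1} \to B_s \oplus Y_s \to \Sigma^{\kappa_s} S \to \Sigma(B_{s-1} \oplus Y_{s-1}).
\]
Concatenating these gives a filtration of $B_L \oplus Y_L = E \oplus F'$, where $F' \coloneqq Y_L$, of length $L$ with all factors shifts of $S$. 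Hence
\[
\delta_t(S, E) \leq \sum_{s=1}^L e^{\kappa_s t} = \sum_{i=1}^m \ell_i\, e^{k_i t} = \hat{\delta}_t^\mathfrak{a}(E),
\]
as required.

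The main obstacle is the bookkeeping in the last step: the definition of $\delta_t(S, E)$ insists that each factor be a shift of the full $S$, not of an individual $S_j$, so one must introduce the auxiliary object $F'$ and propagate the summands $Y_s$ consistently through every remaining stage of the filtration. This is precisely where the flexibility of the optional direct summand $F'$ in the definition of the categorical complexity is used.
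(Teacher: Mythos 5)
Your proof is correct and follows essentially the same strategy as the paper: refine the t-filtration via composition series into a filtration with simple factors, then pad with auxiliary summands to replace each $\Sigma^{\kappa_s} S_{\jmath_s}$ by $\Sigma^{\kappa_s} S$, obtaining a filtration of $E \oplus F'$ witnessing $\delta_t(S,E) \leq \hat{\delta}_t^\mathfrak{a}(E)$. The only difference is presentational: the paper states the padding step without spelling out the bookkeeping that you make explicit via the objects $X_s$ and $Y_s$.
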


\begin{proof}
Take a t-filtration
\begin{equation*}
\begin{tikzcd}[column sep=tiny]
0 \ar[rr] & & * \ar[dl] \ar[rr] & & * \ar[dl] & \cdots & * \ar[rr] & & E \ar[dl]\\
& \Sigma^{k_1}E_1 \ar[ul,dashed] & & \Sigma^{k_2}E_2 \ar[ul,dashed] & & & & \Sigma^{k_m}E_m \ar[ul,dashed] &
\end{tikzcd}.
\end{equation*}
Using composition series of $E_1,\dots,E_m \in \mathcal{H}_\mathfrak{a}$, we can produce a filtration
\begin{equation*}
\begin{tikzcd}[column sep=tiny]
0 \ar[rr] & & * \ar[dl] & \cdots & * \ar[rr] & & * \ar[dl] & \cdots & * \ar[rr] & & * \ar[dl] & \cdots & * \ar[rr] &  & E \ar[dl]\\
& \Sigma^{k_1}S_1^{(1)} \ar[ul,dashed] & & & & \Sigma^{k_1}S_1^{(\lambda_{\mathcal{H}_\mathfrak{a}}(E_1))} \ar[ul,dashed] & & & & \Sigma^{k_m}S_m^{(1)} \ar[ul,dashed] & & & & \Sigma^{k_m}S_m^{(\lambda_{\mathcal{H}_\mathfrak{a}}(E_m))} \ar[ul,dashed] &
\end{tikzcd}
\end{equation*}
using Lemma \ref{ex-lem-1} where $S_i^{(j)} \in \{S_1,\dots,S_n\}$ are the composition factors of $E_i$.
From this, we obtain a filtration
\begin{equation*}
\begin{tikzcd}[column sep=tiny]
0 \ar[rr] & & * \ar[dl] & \cdots & * \ar[rr] & & * \ar[dl] & \cdots & * \ar[rr] & & * \ar[dl] & \cdots & * \ar[rr] &  & E \oplus S' \ar[dl]\\
& \Sigma^{k_1}S \ar[ul,dashed] & & & & \Sigma^{k_1}S \ar[ul,dashed] & & & & \Sigma^{k_m}S \ar[ul,dashed] & & & & \Sigma^{k_m}S \ar[ul,dashed] &
\end{tikzcd}
\end{equation*}
for some $S' \in \mathcal{D}$ which can be written as a direct sum of some shifts of $S_1,\dots,S_n$.
This shows that
\begin{equation*}
\hat{\delta}_t^\mathfrak{a}(E) = \sum_{i=1}^m \lambda_{\mathcal{H}_\mathfrak{a}}(E_i) e^{k_i t} \geq \delta_t(S,E)
\end{equation*}
where the second inequality follows from the definition of $\delta_t(S,E)$.
\end{proof}

\begin{proof}[Proof of Theorem \ref{t-thm-1}]
(Step 1)
Let us first show that
\begin{equation*}
\hat{h}_t^\mathfrak{a}(\Phi) = \limsup_{N \to \infty} \frac{1}{N} \log \hat{\delta}_t^\mathfrak{a}(\Phi^N(G))
\end{equation*}
for any split-generator $G$ of $\mathcal{D}$.
Fix a split-generator $G$ of $\mathcal{D}$.
By definition, we have
\begin{equation*}
\limsup_{N \to \infty} \frac{1}{N} \log \hat{\delta}_t^\mathfrak{a}(\Phi^N(G)) \leq \hat{h}_t^\mathfrak{a}(\Phi).
\end{equation*}
On the other hand, Lemma \ref{dhkk-lem} (3) and Proposition \ref{t-upper} show that
\begin{equation*}
\hat{\delta}_t^\mathfrak{a}(\Phi^N(E)) \leq \hat{\delta}_t^\mathfrak{a}(\Phi^N(G)) \delta_t(\Phi^N(G),\Phi^N(E)) \leq \hat{\delta}_t^\mathfrak{a}(\Phi^N(G)) \delta_t(G,E)
\end{equation*}
for any $E \in \mathcal{D}$.
Taking $\limsup_{N \to \infty} \frac{1}{N} \log(-)$, we obtain
\begin{equation*}
\limsup_{N \to \infty} \frac{1}{N} \log \hat{\delta}_t^\mathfrak{a}(\Phi^N(E)) \leq \limsup_{N \to \infty} \frac{1}{N} \log \hat{\delta}_t^\mathfrak{a}(\Phi^N(G))
\end{equation*}
for any $E \in \mathcal{D}$.
Therefore
\begin{equation*}
\hat{h}_t^\mathfrak{a}(\Phi) \leq \limsup_{N \to \infty} \frac{1}{N} \log \hat{\delta}_t^\mathfrak{a}(\Phi^N(G)).
\end{equation*}

(Step 2)
Let us next show that $\hat{h}_t^\mathfrak{a}(\Phi) = h_t(\Phi)$.
Let $S_1,\dots,S_n \in \mathcal{H}_\mathfrak{a}$ be a complete set of pairwise non-isomorphic simple objects and set $S \coloneqq S_1 \oplus \cdots \oplus S_n$.
Note that $S$ is a split-generator of $\mathcal{D}$.
Now, by Proposition \ref{t-upper} and Lemma \ref{t-lower}, we get
\begin{equation*}
\delta_t(S,\Phi^N(S)) \leq \hat{\delta}_t^\mathfrak{a}(\Phi^N(S)) \leq \hat{\delta}_t^\mathfrak{a}(S) \delta_t(S,\Phi^N(S))
\end{equation*}
and therefore
\begin{equation*}
\limsup_{N \to \infty} \frac{1}{N} \log \hat{\delta}_t^\mathfrak{a}(\Phi^N(S)) \leq \lim_{N \to \infty} \frac{1}{N} \log \delta_t(S,\Phi^N(S)) = h_t(\Phi) \leq \liminf_{N \to \infty} \frac{1}{N} \log \hat{\delta}_t^\mathfrak{a}(\Phi^N(S)).
\end{equation*}
This shows that the limit of the sequence $\left\{ \frac{1}{N} \log \hat{\delta}_t^\mathfrak{a}(\Phi^N(S)) \right\}_{N=1}^\infty$ exists and coincides with $h_t(\Phi)$.
We then conclude that
\begin{equation*}
\hat{h}_t^\mathfrak{a}(\Phi) = \lim_{N \to \infty} \frac{1}{N} \log \hat{\delta}_t^\mathfrak{a}(\Phi^N(S)) = h_t(\Phi)
\end{equation*}
by (Step 1).

(Step 3)
Finally, let us show that the limit of the sequence $\left\{ \frac{1}{N} \log \hat{\delta}_t^\mathfrak{a}(\Phi^N(G)) \right\}_{N=1}^\infty$ exists for any split-generator $G$ of $\mathcal{D}$.
Let $S \in \mathcal{D}$ be as in (Step 2) and fix a split-generator $G$ of $\mathcal{D}$.
Then, by Lemma \ref{dhkk-lem} (3) and Proposition \ref{t-upper}, we have
\begin{equation*}
\hat{\delta}_t^\mathfrak{a}(\Phi^N(S)) \leq \hat{\delta}_t^\mathfrak{a}(\Phi^N(G)) \delta_t(\Phi^N(G),\Phi^N(S)) \leq \hat{\delta}_t^\mathfrak{a}(\Phi^N(G)) \delta_t(G,S)
\end{equation*}
and similarly
\begin{equation*}
\hat{\delta}_t^\mathfrak{a}(\Phi^N(G)) \leq \hat{\delta}_t^\mathfrak{a}(\Phi^N(S)) \delta_t(S,G).
\end{equation*}
Since the limit of the sequence $\left\{ \frac{1}{N} \log \hat{\delta}_t^\mathfrak{a}(\Phi^N(S)) \right\}_{N=1}^\infty$ exists by (Step 2), we see that
\begin{equation*}
\limsup_{N \to \infty} \frac{1}{N} \log \hat{\delta}_t^\mathfrak{a}(\Phi^N(G)) \leq \lim_{N \to \infty} \frac{1}{N} \log \hat{\delta}_t^\mathfrak{a}(\Phi^N(S)) \leq \liminf_{N \to \infty} \frac{1}{N} \log \hat{\delta}_t^\mathfrak{a}(\Phi^N(G)).
\end{equation*}
Therefore the limit of the sequence $\left\{ \frac{1}{N} \log \hat{\delta}_t^\mathfrak{a}(\Phi^N(G)) \right\}_{N=1}^\infty$ also exists.
\end{proof}

\subsection{Entropy as dimension growth via t-structures}

In this section, we give a way to estimate or compute the categorical entropy using a linear algebraic quantity under the presence of a bounded t-structure with finite heart.

Let $\mathcal{D}$ be a $\mathbf{K}$-linear idempotent complete triangulated category.
Denote by $D^b(\mathbf{K})$ the bounded derived category of $\mathbf{K}$-vector spaces.
The following theorem allows us to interpret the categorical entropy as the exponential growth rate of the dimensions of a sequence of some vector spaces.
In this sense, this theorem can be thought of as an analogue of Theorem \ref{dhkk-thm}.

\begin{thm}\label{t-thm-2}
Let $\mathfrak{a}$ be a bounded t-structure on $\mathcal{D}$ with finite heart and $\Gamma : \mathcal{D} \to D^b(\mathbf{K})$ (resp. $\Gamma : \mathcal{D}^\mathrm{op} \to D^b(\mathbf{K})$) be an exact functor.
For an exact endofunctor $\Phi : \mathcal{D} \to \mathcal{D}$, we have
\begin{gather*}
\hat{h}_t^\mathfrak{a}(\Phi) \geq \limsup_{N \to \infty} \frac{1}{N} \log \sum_{k \in \mathbf{Z}} \dim H^{-k}(\Gamma(\Phi^N(E))) e^{kt}\\
\left( \text{resp. } \hat{h}_t^\mathfrak{a}(\Phi) \geq \limsup_{N \to \infty} \frac{1}{N} \log \sum_{k \in \mathbf{Z}} \dim H^{-k}(\Gamma(\Phi^N(E))) e^{-kt} \right)
\end{gather*}
for any object $E \in \mathcal{D}$.
Moreover we have
\begin{gather*}
\hat{h}_t^\mathfrak{a}(\Phi) = \lim_{N \to \infty} \frac{1}{N} \log \sum_{k \in \mathbf{Z}} \dim H^{-k}(\Gamma(\Phi^N(G))) e^{kt}\\
\left( \text{resp. } \hat{h}_t^\mathfrak{a}(\Phi) = \lim_{N \to \infty} \frac{1}{N} \log \sum_{k \in \mathbf{Z}} \dim H^{-k}(\Gamma(\Phi^N(G))) e^{-kt} \right)
\end{gather*}
for any split-generator $G$ of $\mathcal{D}$ provided that $\Gamma(E) \cong \mathbf{K}$ (concentrated in degree 0) for every simple object $E \in \mathcal{H}_\mathfrak{a}$.
\end{thm}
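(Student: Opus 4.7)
The strategy is to introduce a ``dimension gauge'' associated to $\Gamma$ and compare it with $\hat{\delta}_t^\mathfrak{a}$. In the covariant case set
\[
\gamma_t(E) := \sum_{k \in \mathbf{Z}} \dim H^{-k}(\Gamma(E)) e^{kt},
\]
and replace $e^{kt}$ by $e^{-kt}$ in the contravariant case. Two formal properties drive the argument: the shift identity $\gamma_t(\Sigma E) = e^t \gamma_t(E)$ (in the contravariant case one uses $\Sigma_{\mathcal{D}^\mathrm{op}} = \Sigma_\mathcal{D}^{-1}$, so that $\Gamma(\Sigma_\mathcal{D} E) \simeq \Sigma^{-1}\Gamma(E)$ and the sign flip in the exponent exactly compensates), and the subadditivity $\gamma_t(E) \leq \gamma_t(D) + \gamma_t(F)$ for any exact triangle $D \to E \to F \to \Sigma D$, which follows from the long exact sequence in cohomology.

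Given these, the inequality in Part 1 is immediate. For any $E \in \mathcal{D}$, take a t-filtration with respect to $\mathfrak{a}$ with pieces $\Sigma^{k_i} E_i$, and further decompose each $E_i$ via a composition series in $\mathcal{H}_\mathfrak{a}$. Iterated subadditivity combined with the shift identity yields
\[
\gamma_t(E) \leq C \sum_{i=1}^m \lambda_{\mathcal{H}_\mathfrak{a}}(E_i) e^{k_i t} = C \cdot \hat{\delta}_t^\mathfrak{a}(E),
\]
where $C := \max_S \gamma_t(S)$ is taken over the finitely many isomorphism classes of simple objects $S \in \mathcal{H}_\mathfrak{a}$. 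Substituting $\Phi^N(E)$ and taking $\limsup_{N \to \infty} \frac{1}{N} \log(-)$ absorbs the multiplicative constant and gives the claimed inequality.

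For the equality in Part 2, the hypothesis $\Gamma(S) \cong \mathbf{K}$ (concentrated in degree $0$) for every simple $S$ is used to upgrade subadditivity to an equality along t-filtrations. I would proceed in two inductive steps. First, a short induction on the length of a composition series, using the long exact sequence at each stage, shows that for every $X \in \mathcal{H}_\mathfrak{a}$ the complex $\Gamma(X)$ is concentrated in degree $0$ with $\dim H^0(\Gamma(X)) = \lambda_{\mathcal{H}_\mathfrak{a}}(X)$. Second, given a t-filtration of $E$ with $k_1 > \cdots > k_m$, each $\Gamma(\Sigma^{k_i} E_i)$ has cohomology concentrated in the single degree $-k_i$, and the strict separation of these degrees forces the relevant connecting homomorphisms to vanish: if $A_{i-1}$ denotes the $(i-1)$st stage of the filtration then by induction $\Gamma(A_{i-1})$ has cohomology only in $\{-k_1,\dots,-k_{i-1}\}$, and since $-k_{i-1} < -k_i < -k_i + 1$, both $H^{-k_i}(\Gamma(A_{i-1}))$ and $H^{-k_i + 1}(\Gamma(A_{i-1}))$ vanish. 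Tracking the long exact sequence for $A_{i-1} \to A_i \to \Sigma^{k_i} E_i \to \Sigma A_{i-1}$ then gives $H^{-k_i}(\Gamma(A_i)) \cong H^0(\Gamma(E_i))$, $H^{-k_j}(\Gamma(A_i)) \cong H^{-k_j}(\Gamma(A_{i-1}))$ for $j < i$, and vanishing in every other degree. Iterating yields $\gamma_t(E) = \hat{\delta}_t^\mathfrak{a}(E)$ for every $E \in \mathcal{D}$, and applying this to $\Phi^N(G)$ for any split-generator $G$ together with Theorem \ref{t-thm-1} establishes both the existence of the limit and its identification with $\hat{h}_t^\mathfrak{a}(\Phi)$.

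The main subtlety I anticipate is the separation-of-degrees bookkeeping in Part 2: at each stage one must verify simultaneous vanishing of two adjacent cohomology groups of $\Gamma(A_{i-1})$, and that the connecting map out of the new piece at degree $-k_i$ lands in a group that is zero. These are straightforward range checks given $k_1 > \cdots > k_m$, but they must be organized cleanly so that the induction closes. Aside from this, the entire argument rests on long exact sequences in cohomology together with Theorem \ref{t-thm-1}.
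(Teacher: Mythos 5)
Your proof is correct, and the core idea for the equality part — apply $\Gamma$ to (a refinement of) the t-filtration of $\Phi^N(G)$ and observe that the strict decrease $k_1 > \cdots > k_m$ separates the resulting cohomological degrees — is exactly what the paper does. The difference is mainly organizational: the paper first refines the t-filtration into shifted simples (as in the proof of Lemma \ref{t-lower}), applies $\Gamma$ to get a filtration of $\Gamma(\Phi^N(G))$ by shifts of $\mathbf{K}$, and then reads off the cohomology; you first establish by induction on length that $\Gamma$ takes any heart object to a complex concentrated in degree $0$ of the right dimension, and then run a second induction along the t-filtration with explicit long-exact-sequence bookkeeping. For the lower bound in Part 1 you diverge more: the paper chains through the categorical complexity, using Lemma \ref{dhkk-lem} (1), (3), Lemma \ref{t-lower}, and Lemma \ref{lin-lem} to get $\hat{\delta}_t^\mathfrak{a}(\Phi^N(E)) \geq \frac{1}{\delta_t(\mathbf{K},\Gamma(S))}\,\delta_t(\mathbf{K},\Gamma(\Phi^N(E)))$, while you prove the pointwise inequality $\gamma_t(E) \leq C\,\hat{\delta}_t^\mathfrak{a}(E)$ directly by iterated subadditivity of the dimension gauge along a refined t-filtration. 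Your route is more self-contained (it only needs long exact sequences and Theorem \ref{t-thm-1}), whereas the paper's reuses the $\delta_t$-machinery already in place; both are sound, and the multiplicative constant $C$ is indeed harmless after taking $\limsup_N \frac{1}{N}\log(-)$. One small thing worth making explicit in a written-up version: in the degenerate case $\Gamma(S) \cong 0$ for every simple $S$, the constant $C$ vanishes and the left-hand side is $-\infty$, so the inequality is trivial — the paper notes this separately, and your argument handles it implicitly but it deserves a word.
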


\begin{rmk}
The latter part of Theorem \ref{t-thm-2} still holds under a mild assumption that $H^k(\Gamma(E)) \cong 0$ for every simple object $E \in \mathcal{H}_\mathfrak{a}$ and $k \neq 0$.
However, for the sake of simplicity, we will keep assuming that $\Gamma(E) \cong \mathbf{K}$ for every simple object $E \in \mathcal{H}_\mathfrak{a}$.
\end{rmk}

\begin{lem}[{\cite[Lemma 2.4]{DHKK}}]\label{lin-lem}
For $V \in D^b(\mathbf{K})$, we have
\begin{equation*}
\delta_t^{D^b(\mathbf{K})}(\mathbf{K},V) = \sum_{k \in \mathbf{Z}} \dim H^{-k}(V) e^{kt}.
\end{equation*}
\end{lem}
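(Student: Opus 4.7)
The plan is to prove the equality $\delta_t^{D^b(\mathbf{K})}(\mathbf{K},V) = \sum_{k \in \mathbf{Z}} \dim H^{-k}(V) e^{kt}$ by a sandwich argument, exploiting the structural fact that every object of $D^b(\mathbf{K})$ splits (noncanonically) as a finite direct sum $V \cong \bigoplus_k \Sigma^k \mathbf{K}^{\oplus d_k}$, where $d_k = \dim H^{-k}(V)$. Write $p_t(W) \coloneqq \sum_{k \in \mathbf{Z}} \dim H^{-k}(W) e^{kt}$ for brevity; the goal is then $\delta_t(\mathbf{K},V) = p_t(V)$.

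For the upper bound $\delta_t(\mathbf{K},V) \leq p_t(V)$, I would use the direct-sum decomposition to construct an explicit filtration of $V$ itself (taking $V' = 0$ in the definition of $\delta_t$). Build $V$ by repeatedly appending one summand at a time: starting from $0$, each extension is split, so the graded pieces are precisely the $\Sigma^k \mathbf{K}$ occurring in the decomposition, each one appearing $d_k$ times. The resulting complexity sum is exactly $\sum_k d_k e^{kt} = p_t(V)$.

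For the lower bound $\delta_t(\mathbf{K},V) \geq p_t(V)$, the key input is subadditivity of $p_t$ along exact triangles. Given an exact triangle $A \to B \to C \to \Sigma A$ in $D^b(\mathbf{K})$, the long exact cohomology sequence forces $\dim H^n(B) \leq \dim H^n(A) + \dim H^n(C)$ for every $n$, and weighting this inequality by $e^{-nt}$ and summing gives $p_t(B) \leq p_t(A) + p_t(C)$. Iterating along an arbitrary filtration $0 \to A_1 \to \cdots \to A_m = V \oplus V'$ with graded pieces $\Sigma^{k_i}\mathbf{K}$, and noting that $p_t(\Sigma^{k_i}\mathbf{K}) = e^{k_i t}$, one obtains $p_t(V \oplus V') \leq \sum_{i=1}^m e^{k_i t}$. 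Since $p_t$ is additive on direct sums, $p_t(V) \leq p_t(V \oplus V')$, and taking the infimum over admissible filtrations yields $p_t(V) \leq \delta_t(\mathbf{K},V)$.

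There is essentially no obstacle: the entire argument is elementary linear algebra once the classification of objects of $D^b(\mathbf{K})$ as direct sums of shifted copies of $\mathbf{K}$ is in hand, and no t-structure machinery from later sections is invoked (which is important, since this lemma is used as an input there). The only mild point of care is the shift convention, ensuring that $\Sigma^k \mathbf{K}$ genuinely contributes to $H^{-k}$ so that the weight $e^{kt}$ is matched consistently on both sides of the claimed equality.
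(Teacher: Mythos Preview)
Your argument is correct. Note, however, that the paper does not supply its own proof of this lemma: it is stated with a citation to \cite[Lemma 2.4]{DHKK} and immediately followed by the proof of Theorem~\ref{t-thm-2}, so there is nothing in the paper to compare against. Your sandwich argument---an explicit filtration from the splitting $V \cong \bigoplus_k \Sigma^k\mathbf{K}^{\oplus d_k}$ for the upper bound, and subadditivity of $p_t$ along exact triangles via the long exact cohomology sequence for the lower bound---is precisely the standard proof one finds in the cited source, and it is self-contained as you note.
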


\begin{proof}[Proof of Theorem \ref{t-thm-2}]
Note that it is enough to prove the theorem for an exact functor $\Gamma : \mathcal{D} \to D^b(\mathbf{K})$.
Indeed if we have an exact functor $\Gamma : \mathcal{D}^\mathrm{op} \to D^b(\mathbf{K})$ then, by Lemma \ref{op-ent} and Theorem \ref{t-thm-1}, we have
\begin{equation*}
\hat{h}_t^{\mathcal{D},\mathfrak{a}}(\Phi) = h_t^\mathcal{D}(\Phi) = h_{-t}^{\mathcal{D}^\mathrm{op}}(\Phi^\mathrm{op}) = \hat{h}_{-t}^{\mathcal{D}^\mathrm{op},\mathfrak{a}^\mathrm{op}}(\Phi^\mathrm{op})
\end{equation*}
where $\mathfrak{a}^\mathrm{op} \coloneqq (\mathcal{D}^{\geq 0},\mathcal{D}^{\leq 0})$ is the opposite t-structure of $\mathfrak{a} = (\mathcal{D}^{\leq 0},\mathcal{D}^{\geq 0})$.

Let us first prove the lower bound for $\hat{h}_t^\mathfrak{a}(\Phi)$.
If $\Gamma : \mathcal{D} \to D^b(\mathbf{K})$ is zero then it is obvious.
Assume from now on that $\Gamma : \mathcal{D} \to D^b(\mathbf{K})$ is non-zero.
Let $S_1,\dots,S_n \in \mathcal{H}_\mathfrak{a}$ be a complete set of pairwise non-isomorphic simple objects and set $S \coloneqq S_1 \oplus \cdots \oplus S_n$.
Then, for any object $E \in \mathcal{D}$, we have
\begin{equation*}
\hat{\delta}_t^\mathfrak{a}(\Phi^N(E)) \geq \delta_t(S,\Phi^N(E)) \geq \delta_t(\Gamma(S),\Gamma(\Phi^N(E))) \geq \frac{1}{\delta_t(\mathbf{K},\Gamma(S))} \delta_t(\mathbf{K},\Gamma(\Phi^N(E)))
\end{equation*}
by Lemmas \ref{dhkk-lem} (1), (3) and \ref{t-lower}.
Note that $\delta_t(\mathbf{K},\Gamma(S)) \neq 0$ by Lemma \ref{lin-lem} since $\Gamma(S) \not\cong 0$.
Then, by Theorem \ref{t-thm-1} and Lemma \ref{lin-lem}, we obtain
\begin{align*}
\hat{h}_t^\mathfrak{a}(\Phi)
&\geq \limsup_{N \to \infty} \frac{1}{N} \log \hat{\delta}_t^\mathfrak{a}(\Phi^N(E))\\
&\geq \limsup_{N \to \infty} \frac{1}{N} \log \delta_t(\mathbf{K},\Gamma(\Phi^N(E)))\\
&= \limsup_{N \to \infty} \frac{1}{N} \log \sum_{k \in \mathbf{Z}} \dim H^{-k}(\Gamma(\Phi^N(E))) e^{kt}.
\end{align*}

Suppose we have an exact functor $\Gamma : \mathcal{D} \to D^b(\mathbf{K})$ such that $\Gamma(E) \cong \mathbf{K}$ for every simple object $E \in \mathcal{H}_\mathfrak{a}$.
Let $G$ be a split-generator of $\mathcal{D}$.
As in the proof of Lemma \ref{t-lower}, we have a filtration
\begin{equation}\label{s-filt}
\begin{tikzcd}[column sep=tiny]
0 \ar[rr] & & * \ar[dl] & \cdots & * \ar[rr] & & * \ar[dl] & \cdots & * \ar[rr] & & * \ar[dl] & \cdots & * \ar[rr] &  & \Phi^N(G) \ar[dl]\\
& \Sigma^{k_1}S_1^{(1)} \ar[ul,dashed] & & & & \Sigma^{k_1}S_1^{(\lambda_1)} \ar[ul,dashed] & & & & \Sigma^{k_m}S_m^{(1)} \ar[ul,dashed] & & & & \Sigma^{k_m}S_m^{(\lambda_m)} \ar[ul,dashed] &
\end{tikzcd}
\end{equation}
with $k_1>\cdots>k_m$ where $S_i^{(j)} \in \{S_1,\dots,S_n\}$ are the composition factors of $H_\mathfrak{a}^{-k_i}(\Phi^N(G))$ and $\lambda_i \coloneqq \lambda_{\mathcal{H}_\mathfrak{a}}(H_\mathfrak{a}^{-k_i}(\Phi^N(G)))$.
Then, applying $\Gamma$ to the filtration \eqref{s-filt}, we obtain
\begin{equation*}
\begin{tikzcd}[column sep=tiny]
0 \ar[rr] & & * \ar[dl] & \cdots & * \ar[rr] & & * \ar[dl] & \cdots & * \ar[rr] & & * \ar[dl] & \cdots & * \ar[rr] &  & \Gamma(\Phi^N(G)) \ar[dl]\\
& \Sigma^{k_1}\mathbf{K} \ar[ul,dashed] & & & & \Sigma^{k_1}\mathbf{K} \ar[ul,dashed] & & & & \Sigma^{k_m}\mathbf{K} \ar[ul,dashed] & & & & \Sigma^{k_m}\mathbf{K} \ar[ul,dashed] &
\end{tikzcd}.
\end{equation*}
Moreover, since $k_1 > \cdots > k_m$, we see that
\begin{equation*}
H^{-k_i}(\Gamma(\Phi^N(G))) \cong \mathbf{K}^{\lambda_i}
\end{equation*}
for every $1 \leq i \leq m$ and $H^{-k}(\Gamma(\Phi^N(G))) \cong 0$ for every $k \not\in \{k_1,\dots,k_m\}$.
Consequently, we have
\begin{align*}
\hat{h}_t^\mathfrak{a}(\Phi)
&= \lim_{N \to \infty} \frac{1}{N} \log \sum_{i=1}^m \lambda_{\mathcal{H}_\mathfrak{a}}(H_\mathfrak{a}^{-k_i}(\Phi^N(G))) e^{k_it}\\
&= \lim_{N \to \infty} \frac{1}{N} \log \sum_{i=1}^m \lambda_i e^{k_it}\\
&= \lim_{N \to \infty} \frac{1}{N} \log \sum_{k \in \mathbf{Z}} \dim H^{-k}(\Gamma(\Phi^N(G))) e^{kt}
\end{align*}
as desired.
\end{proof}

\section{Entropy and co-t-structures}

\subsection{Co-t-structures}

Let us first review the definition and basic properties of co-t-structures.
The reader may refer to \cite{Jor} for a brief overview of this subject.

Let $\mathcal{D}$ be a $\mathbf{K}$-linear idempotent complete triangulated category.

\begin{dfn}[{\cite[Definition 1.1.1]{Bon},\cite[Definition 2.4]{Pau}}]
A {\em co-t-structure} on $\mathcal{D}$ is a pair $\mathfrak{b} = (\mathcal{D}_{\geq 0},\mathcal{D}_{\leq 0})$ of full additive subcategories of $\mathcal{D}$ closed under taking direct summands satisfying the following conditions:
\begin{enumerate}
\item $\Sigma^{-1}\mathcal{D}_{\geq 0} \subset \mathcal{D}_{\geq 0}$ and $\Sigma\mathcal{D}_{\leq 0} \subset \mathcal{D}_{\leq 0}$.
\item $\mathrm{Hom}(E,F) = 0$ if $E \in \mathcal{D}_{\geq 1},F \in \mathcal{D}_{\leq 0}$ where $\mathcal{D}_{\geq 1} \coloneqq \Sigma^{-1}\mathcal{D}_{\geq 0}$.
\item For every $E \in \mathcal{D}$, there exists an exact triangle $D \to E \to F \to \Sigma D$ with $D \in \mathcal{D}_{\geq 1},F \in \mathcal{D}_{\leq 0}$.
\end{enumerate}
If they further satisfy
\begin{equation*}
\mathcal{D} = \bigcup_{k \in \mathbf{Z}} \Sigma^k\mathcal{D}_{\geq 0} = \bigcup_{k \in \mathbf{Z}} \Sigma^k\mathcal{D}_{\leq 0}
\end{equation*}
then $\mathfrak{b}$ is called {\em bounded}.
\end{dfn}

Let $\mathcal{A}$ be an additive category.
An object of $\mathcal{A}$ is called {\em indecomposable} if it is not isomorphic to a direct sum of two non-zero objects.
An additive category is called a {\em Krull--Schmidt category} if every its object is isomorphic to a finite direct sum of objects whose endomorphism rings are local.
It is known that an object of a Krull--Schmidt category is indecomposable if and only if its endomorphism ring is local.
Moreover the number and the isomorphism classes of the indecomposable direct summands in a direct sum decomposition of an object does not depend on the choice of a direct sum decomposition.

\begin{dfn}
Let $\mathfrak{b} = (\mathcal{D}_{\geq 0},\mathcal{D}_{\leq 0})$ be a co-t-structure on $\mathcal{D}$.
The subcategory $\mathcal{C}_\mathfrak{b} \coloneqq \mathcal{D}_{\geq 0} \cap \mathcal{D}_{\leq 0}$, called the {\em co-heart} of $\mathfrak{b}$, is an additive category.
We say $\mathcal{C}_\mathfrak{b}$ is {\em finite} if it is a Krull--Schmidt category and only has finitely many isomorphism classes of indecomposable objects.
If the co-heart $\mathcal{C}_\mathfrak{b}$ is finite then, for $E \in \mathcal{C}_\mathfrak{b}$, we define $\sigma_{\mathcal{C}_\mathfrak{b}}(E) \geq 0$ as the number of indecomposable direct summands in a direct sum decomposition of $E$ in $\mathcal{C}_\mathfrak{b}$.
\end{dfn}

The following is an analogue of Proposition \ref{t-filt}.

\begin{prop}[{\cite[Proposition 1.5.6]{Bon}}]\label{c-filt}
Let $\mathfrak{b}$ be a bounded co-t-structure on $\mathcal{D}$.
Then, for any $E \in \mathcal{D}$, there exists a filtration
\begin{equation*}
\begin{tikzcd}[column sep=tiny]
0 \ar[rr] & & * \ar[dl] \ar[rr] & & * \ar[dl] & \cdots & * \ar[rr] & & E \ar[dl]\\
& \Sigma^{k_1}E_1 \ar[ul,dashed] & & \Sigma^{k_2}E_2 \ar[ul,dashed] & & & & \Sigma^{k_m}E_m \ar[ul,dashed] &
\end{tikzcd}
\end{equation*}
such that $E_i \in \mathcal{C}_\mathfrak{b}$ and $k_1<\cdots<k_m$.
\end{prop}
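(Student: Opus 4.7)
The plan is to mirror the argument for Proposition \ref{t-filt} by induction on the \emph{co-amplitude} of $E$. By boundedness of $\mathfrak{b}$, for any $E \in \mathcal{D}$ there exist integers $a \leq b$ such that $E \in \Sigma^{-a}\mathcal{D}_{\geq 0} \cap \Sigma^{-b}\mathcal{D}_{\leq 0}$, and I induct on the non-negative integer $b-a$. The base case $b = a$ is immediate: then $\Sigma^a E \in \mathcal{D}_{\geq 0} \cap \mathcal{D}_{\leq 0} = \mathcal{C}_\mathfrak{b}$, so the trivial one-step filtration with $E_1 \coloneqq \Sigma^a E$ and $k_1 \coloneqq -a$ does the job.

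For the inductive step, I apply axiom (3) of the co-t-structure to $\Sigma^a E$ and shift back, obtaining an exact triangle
\begin{equation*}
D \to E \to F \to \Sigma D
\end{equation*}
with $D \in \Sigma^{-(a+1)}\mathcal{D}_{\geq 0}$ and $F \in \Sigma^{-a}\mathcal{D}_{\leq 0}$. The crucial claim is that in fact $F$ lies additionally in $\Sigma^{-a}\mathcal{D}_{\geq 0}$, so that $\Sigma^a F \in \mathcal{C}_\mathfrak{b}$. This uses closure of $\Sigma^{-a}\mathcal{D}_{\geq 0}$ under extensions (a standard consequence of the orthogonality axiom, derived below), combined with the observation that $E$ and $\Sigma D$ both lie in $\Sigma^{-a}\mathcal{D}_{\geq 0}$ and $F$ sits in the rotated triangle $E \to F \to \Sigma D \to \Sigma E$. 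A symmetric argument, using closure of $\Sigma^{-b}\mathcal{D}_{\leq 0}$ under extensions together with the fact that $\Sigma^{-1}F \in \Sigma^{-(a+1)}\mathcal{D}_{\leq 0} \subset \Sigma^{-b}\mathcal{D}_{\leq 0}$ (since $a+1 \leq b$ in the inductive step), shows $D \in \Sigma^{-b}\mathcal{D}_{\leq 0}$. Hence $D$ has co-amplitude at most $b-a-1$.

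The inductive hypothesis then furnishes a filtration of $D$ with factors $\Sigma^{k_1}E_1, \dots, \Sigma^{k_{m-1}}E_{m-1}$ where $E_i \in \mathcal{C}_\mathfrak{b}$ and $k_1 < \cdots < k_{m-1}$; moreover, since $D \in \Sigma^{-(a+1)}\mathcal{D}_{\geq 0}$, one checks that every $k_i$ satisfies $k_i \leq -(a+1)$. Splicing the triangle $D \to E \to F \to \Sigma D$ onto the end of this filtration using Lemma \ref{ex-lem-1} extends it to a filtration of $E$ whose final factor is $F$ at shift $k_m \coloneqq -a$; the strict inequality $k_{m-1} \leq -(a+1) < -a = k_m$ preserves the required increasing order.

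The main obstacle I anticipate is verifying the extension-closure property for $\mathcal{D}_{\geq 0}$ and $\mathcal{D}_{\leq 0}$, which, unlike the t-structure case, is not part of the defining data of a co-t-structure. It must be deduced from the vanishing axiom: given a triangle $A \to B \to C \to \Sigma A$ with $A, C \in \mathcal{D}_{\geq 0}$, one applies $\mathrm{Hom}(-,X)$ for $X \in \mathcal{D}_{\leq -1}$ to obtain $\mathrm{Hom}(B,X) = 0$ for all such $X$, then invokes the defining triangle of $B$ from axiom (3) together with orthogonality to force the ``non-$\mathcal{D}_{\geq 0}$'' part of $B$ to vanish, yielding $B \in \mathcal{D}_{\geq 0}$ by the built-in closure of $\mathcal{D}_{\geq 0}$ under direct summands.
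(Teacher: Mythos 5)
The paper does not prove this proposition; it is cited verbatim from Bondarko. Your argument, by induction on co-amplitude and using the weight decomposition from axiom (3) together with extension-stability of $\mathcal{D}_{\geq n}$ and $\mathcal{D}_{\leq n}$, is exactly the standard route to this result, and the inductive step (including the bookkeeping $k_i \leq -(a+1)$ for the factors of $D$, which yields $k_{m-1} < k_m = -a$) is set up correctly.

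The one place where your sketch as written would leave a reader stuck is the final paragraph. Having shown $\mathrm{Hom}(B,X)=0$ for all $X\in\mathcal{D}_{\leq -1}$, you say to "invoke the defining triangle of $B$ from axiom (3)." Applying (3) directly to $B$ gives $D\to B\to F$ with $F\in\mathcal{D}_{\leq 0}$, and the map $B\to F$ need \emph{not} vanish (indeed, for $B\in\mathcal{C}_\mathfrak{b}$ one can take $D=0$ and $F=B$), so nothing is forced to split. The correct move is to apply (3) to $\Sigma^{-1}B$ and shift, producing a triangle $D'\to B\to F'\to \Sigma D'$ with $D'\in\mathcal{D}_{\geq 0}$ and $F'\in\mathcal{D}_{\leq -1}$. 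Now the map $B\to F'$ does vanish by your orthogonality computation, so $B$ is a retract of $D'\in\mathcal{D}_{\geq 0}$, and closure of $\mathcal{D}_{\geq 0}$ under direct summands gives $B\in\mathcal{D}_{\geq 0}$. With that shift inserted, your proof of extension-stability, and hence the whole argument, goes through.

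Two harmless remarks: your appeal to Lemma \ref{ex-lem-1} for the final splice is unnecessary, since appending the single triangle $D\to E\to F$ to the end of a filtration of $D$ is just concatenation; and your parenthetical that extension-closure "is not part of the defining data of a co-t-structure, unlike the t-structure case" is slightly misleading, since it is not part of the defining data of a t-structure either (there it is deduced from the characterization of the aisle as a left orthogonal).
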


\begin{rmk}\label{t-vs-c}
In this paper, a filtration as in Proposition \ref{c-filt} will be called a {\em co-t-filtration} of $E$ with respect to $\mathfrak{b}$.
In contrast with a t-filtration, a co-t-filtration is not unique.
For example, the zero object has a co-t-filtration of the form
\begin{equation*}
\begin{tikzcd}[column sep=tiny]
0 \ar[rr] & & \Sigma^{-1}E \ar[dl,"\mathrm{id}"] \ar[rr] & & 0 \ar[dl]\\
& \Sigma^{-1}E \ar[ul,dashed] & & E \ar[ul,dashed,"\mathrm{id}"] &
\end{tikzcd}
\end{equation*}
for every $E \in \mathcal{C}_\mathfrak{b}$.
\end{rmk}

\subsection{Entropy via co-t-structures}

The goal of this section is to define an entropy-type invariant using a bounded co-t-structure with finite co-heart and study its relationship with the categorical entropy.

Let $\mathcal{D}$ be a $\mathbf{K}$-linear idempotent complete triangulated category.

\begin{dfn}
Let $\mathfrak{b}$ be a bounded co-t-structure on $\mathcal{D}$ with finite co-heart.
Then, for $E \in \mathcal{D}$ and $t \in \mathbf{R}$, we define
\begin{equation*}
\check{\delta}_t^{\mathcal{D},\mathfrak{b}}(E) \coloneqq \inf
\left\{\sum_{i=1}^m \sigma_{\mathcal{C}_\mathfrak{b}}(E_i) e^{k_i t} \,\left|\,
\begin{tikzcd}[column sep=tiny]
0 \ar[rr] & & * \ar[dl] \ar[rr] & & * \ar[dl] & \cdots & * \ar[rr] & & E \ar[dl]\\
& \Sigma^{k_1}E_1 \ar[ul,dashed] & & \Sigma^{k_2}E_2 \ar[ul,dashed] & & & & \Sigma^{k_m}E_m \ar[ul,dashed] &
\end{tikzcd}
\text{: a co-t-filtration}
\right.\right\}.
\end{equation*}
We simply write $\check{\delta}_t^\mathfrak{b}(E) \coloneqq \check{\delta}_t^{\mathcal{D},\mathfrak{b}}(E)$ if $\mathcal{D}$ is clear from the context.
\end{dfn}

\begin{dfn}\label{c-ent}
Let $\mathfrak{b}$ be a bounded co-t-structure on $\mathcal{D}$ with finite co-heart and $\Phi : \mathcal{D} \to \mathcal{D}$ be an exact endofunctor.
Then we define
\begin{equation*}
\check{h}_t^{\mathcal{D},\mathfrak{b}}(\Phi) \coloneqq \sup_{E \in \mathcal{D}} \limsup_{N \to \infty} \frac{1}{N} \log \check{\delta}_t^{\mathcal{D},\mathfrak{b}}(\Phi^N(E)).
\end{equation*}
We simply write $\check{h}_t^\mathfrak{b}(\Phi) \coloneqq \check{h}_t^{\mathcal{D},\mathfrak{b}}(\Phi)$ if $\mathcal{D}$ is clear from the context.
\end{dfn}

The following theorem corresponds to Theorem \ref{t-thm-1}.
Like Theorem \ref{t-thm-1}, it states that $\check{h}_t^\mathfrak{b}$ also coincides with the categorical entropy.

\begin{thm}\label{c-thm-1}
Let $\mathfrak{b}$ be a bounded co-t-structure on $\mathcal{D}$ with finite co-heart and $\Phi : \mathcal{D} \to \mathcal{D}$ be an exact endofunctor.
Then we have
\begin{equation*}
h_t(\Phi) = \check{h}_t^\mathfrak{b}(\Phi) = \lim_{N \to \infty} \frac{1}{N} \log \check{\delta}_t^\mathfrak{b}(\Phi^N(G))
\end{equation*}
for any split-generator $G$ of $\mathcal{D}$.
In particular, $\check{h}_t^\mathfrak{b}(\Phi)$ does not depend on the choice of $\mathfrak{b}$.
\end{thm}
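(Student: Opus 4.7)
My plan is to mirror the three-step scheme used for Theorem \ref{t-thm-1}. The backbone consists of three lemmas analogous to Propositions \ref{t-ex-prop}, \ref{t-upper} and Lemma \ref{t-lower}: (a) subadditivity $\check{\delta}_t^\mathfrak{b}(E) \leq \check{\delta}_t^\mathfrak{b}(D) + \check{\delta}_t^\mathfrak{b}(F)$ for every exact triangle $D \to E \to F \to \Sigma D$; (b) an upper bound $\check{\delta}_t^\mathfrak{b}(E) \leq \check{\delta}_t^\mathfrak{b}(D)\,\delta_t(D,E)$; and (c) a lower bound $\check{\delta}_t^\mathfrak{b}(E) \geq \delta_t(M,E)$, where $M \coloneqq M_1 \oplus \cdots \oplus M_n$ is the direct sum of a complete set of pairwise non-isomorphic indecomposable objects of $\mathcal{C}_\mathfrak{b}$. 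Since $\mathfrak{b}$ is bounded and $\mathcal{C}_\mathfrak{b}$ is finite, Proposition \ref{c-filt} together with Krull--Schmidt shows that $M$ is automatically a split-generator of $\mathcal{D}$.

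The only genuinely new technical point is (a). Given $\varepsilon > 0$, I would pick co-t-filtrations of $D$ and $F$ whose $\check{\delta}$-sums are within $\varepsilon$ of $\check{\delta}_t^\mathfrak{b}(D)$ and $\check{\delta}_t^\mathfrak{b}(F)$, splice them through the triangle $D \to E \to F$ via Lemma \ref{ex-lem-1}, and obtain a (generally non-co-t) filtration of $E$ whose formal sum $\sum_i \sigma_{\mathcal{C}_\mathfrak{b}}(E_i) e^{k''_i t}$ equals the sum of the two original ones. I would then rearrange it into a genuine co-t-filtration by repeatedly applying Lemma \ref{ex-lem-2} to adjacent pairs $(\Sigma^{k''_i} E_i, \Sigma^{k''_{i+1}} E_{i+1})$ with $k''_i \geq k''_{i+1}$. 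The key observation is that for $E_i, E_{i+1} \in \mathcal{C}_\mathfrak{b}$ the co-t-structure axioms give $\mathrm{Hom}(E_{i+1}, \Sigma^j E_i) = 0$ for every $j \geq 1$ (since $\Sigma^{-j} E_{i+1} \in \mathcal{D}_{\geq 1}$ and $E_i \in \mathcal{D}_{\leq 0}$), so the obstruction map $\Sigma g \circ f$, which lives in $\mathrm{Hom}(E_{i+1}, \Sigma^{k''_i - k''_{i+1} + 1} E_i)$, always vanishes. Consequently the adjacent pair either swaps (when $k''_i > k''_{i+1}$, via the splitting $C' \cong \Sigma^{k''_i} E_i \oplus \Sigma^{k''_{i+1}} E_{i+1}$) or merges (when $k''_i = k''_{i+1}$, via $C' \cong \Sigma^{k''_i}(E_i \oplus E_{i+1})$ with $E_i \oplus E_{i+1} \in \mathcal{C}_\mathfrak{b}$). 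Both operations preserve the formal sum because $\sigma_{\mathcal{C}_\mathfrak{b}}$ is additive on direct sums; letting $\varepsilon \to 0$ then yields (a). This rearrangement is in fact cleaner than in the t-structure case, since the co-heart Hom-vanishing runs in the \emph{right} direction and no analogue of Case 3 in the proof of Proposition \ref{t-ex-prop} is needed.

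With (a) in hand, (b) is immediate by iterating (a) along a filtration of $E \oplus E'$ by shifts of $D$ approximating $\delta_t(D,E)$, exactly as in the proof of Proposition \ref{t-upper}. For (c), I would take any co-t-filtration of $E$, use Krull--Schmidt to split each piece $E_i$ into $\sigma_{\mathcal{C}_\mathfrak{b}}(E_i)$ indecomposables of $\mathcal{C}_\mathfrak{b}$, refine the filtration so that each step is a shift of a single indecomposable summand of $M$, and then insert complementary summands to produce a filtration of $E \oplus E''$ by shifts of $M$ whose associated sum equals $\sum_i \sigma_{\mathcal{C}_\mathfrak{b}}(E_i) e^{k_i t}$; this is a direct transcription of the proof of Lemma \ref{t-lower}, and taking the infimum over co-t-filtrations yields (c). Once (a)--(c) are established, the three-step argument of Theorem \ref{t-thm-1} transfers verbatim: Step 1 uses (b) to show $\check{h}_t^\mathfrak{b}(\Phi) = \limsup_N \frac{1}{N} \log \check{\delta}_t^\mathfrak{b}(\Phi^N(G))$ for any split-generator $G$; Step 2 uses the sandwich $\delta_t(M, \Phi^N(M)) \leq \check{\delta}_t^\mathfrak{b}(\Phi^N(M)) \leq \check{\delta}_t^\mathfrak{b}(M)\,\delta_t(M, \Phi^N(M))$ from (b) and (c) to identify $\check{h}_t^\mathfrak{b}(\Phi)$ with $h_t(\Phi)$ and simultaneously to show the limit along $M$ exists; Step 3 transfers existence of the limit from $M$ to an arbitrary split-generator through (b) applied in both directions. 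The hard part is (a); everything else is a mechanical transfer of the proof of Theorem \ref{t-thm-1}.
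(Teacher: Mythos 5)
Your proposal is correct and follows essentially the same route as the paper: the paper establishes Proposition \ref{c-ex-prop} (subadditivity) by the identical splice-and-reorder argument, noting precisely the Hom-vanishing $\mathrm{Hom}(E_{i+1},\Sigma^{k''_i-k''_{i+1}+1}E_i)=0$ for co-heart objects whenever $k''_i \geq k''_{i+1}$, then derives Proposition \ref{c-upper} and Lemma \ref{c-lower} (your (b) and (c)) as you describe, and finally says the theorem "can be proved as in the proof of Theorem \ref{t-thm-1}." Your observation that the obstruction map always vanishes — so that only the swap and merge cases occur, with no analogue of Case~3 of Proposition \ref{t-ex-prop} — is exactly what makes the paper's proof of Proposition \ref{c-ex-prop} shorter than that of \ref{t-ex-prop}.
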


The strategy of the proof of Theorem \ref{c-thm-1} is the same as that of Theorem \ref{t-thm-1}.
We need the following proposition which corresponds to Proposition \ref{t-ex-prop}.

\begin{prop}\label{c-ex-prop}
Let $\mathfrak{b}$ be a bounded co-t-structure on $\mathcal{D}$ with finite co-heart.
If there is an exact triangle $D \to E \to F \to \Sigma D$ in $\mathcal{D}$ then
\begin{equation*}
\check{\delta}_t^\mathfrak{b}(E) \leq \check{\delta}_t^\mathfrak{b}(D) + \check{\delta}_t^\mathfrak{b}(F).
\end{equation*}
\end{prop}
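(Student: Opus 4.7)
The plan is to follow the blueprint of Proposition \ref{t-ex-prop}, with co-t-filtrations in place of t-filtrations and a merge/swap rearrangement that turns out to be simpler than the t-structure case, thanks to the stronger $\mathrm{Hom}$-vanishing available in the co-heart.

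First, I would pick co-t-filtrations of $D$ and $F$ with respect to $\mathfrak{b}$, whose pieces are $\Sigma^{k_1}D_1,\dots,\Sigma^{k_m}D_m$ with $k_1<\dots<k_m$ and $\Sigma^{k'_1}F_1,\dots,\Sigma^{k'_{m'}}F_{m'}$ with $k'_1<\dots<k'_{m'}$, and insert them into the exact triangle $D\to E\to F\to\Sigma D$ by iterating Lemma \ref{ex-lem-1}. This yields a filtration of $E$ whose successive pieces are the concatenation of the two lists, carrying weight
\[
\sum_{i=1}^m \sigma_{\mathcal{C}_\mathfrak{b}}(D_i)e^{k_it}+\sum_{j=1}^{m'}\sigma_{\mathcal{C}_\mathfrak{b}}(F_j)e^{k'_jt}.
\]
This need not be a co-t-filtration, since shifts from $D$ may exceed shifts from $F$ at the junction.

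Next I would bubble-sort it into a genuine co-t-filtration. The key observation is that for $X,Y\in\mathcal{C}_\mathfrak{b}$ and $n\geq 1$ one has $\mathrm{Hom}(X,\Sigma^nY)=0$, because $\Sigma^{-n}X\in\mathcal{D}_{\geq 1}$ and $Y\in\mathcal{D}_{\leq 0}$ by axiom (2) of a co-t-structure. Consequently, whenever adjacent pieces $\Sigma^{k''_i}E_i$ and $\Sigma^{k''_{i+1}}E_{i+1}$ of the combined filtration sit in the wrong order (i.e.\ $k''_i\geq k''_{i+1}$), the composition $\Sigma g\circ f$ from Lemma \ref{ex-lem-2} lies in $\mathrm{Hom}(E_{i+1},\Sigma^{k''_i-k''_{i+1}+1}E_i)=0$, so the middle term of the resulting triangle splits as $\Sigma^{k''_i}E_i\oplus\Sigma^{k''_{i+1}}E_{i+1}$. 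Via Lemma \ref{ex-lem-1} I can then either swap the two pieces (when $k''_i>k''_{i+1}$) or merge them into a single piece $\Sigma^{k''_i}(E_i\oplus E_{i+1})$ (when $k''_i=k''_{i+1}$); since $\sigma_{\mathcal{C}_\mathfrak{b}}$ is additive under direct sums, each step leaves the total weight unchanged.

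Iterating terminates, producing a genuine co-t-filtration of $E$ whose weight equals the displayed sum. Taking infimum over the chosen co-t-filtrations of $D$ and $F$ gives $\check{\delta}_t^\mathfrak{b}(E)\leq\check{\delta}_t^\mathfrak{b}(D)+\check{\delta}_t^\mathfrak{b}(F)$. The main point—and the one place where the argument diverges from Proposition \ref{t-ex-prop}—is verifying the $\mathrm{Hom}$-vanishing that forces every intermediate extension in $\mathcal{C}_\mathfrak{b}$ to split: the co-heart need not be extension-closed in $\mathcal{D}$, so without this splitting the naive approach would fail. It is also why the delicate Case 3 of the t-structure proof, which required truncation of a non-heart extension, has no counterpart here.
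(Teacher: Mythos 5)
Your proposal is correct and follows essentially the same route as the paper's proof: insert co-t-filtrations of $D$ and $F$ into the triangle via Lemma~\ref{ex-lem-1}, then rearrange adjacent pieces using the co-heart's stronger $\mathrm{Hom}$-vanishing (every wrongly ordered pair splits, so one only ever swaps or merges). The paper phrases the bookkeeping with $\varepsilon$-approximations of the two infima rather than taking the infimum at the end, but since the filtrations of $D$ and $F$ are chosen independently these two formulations are interchangeable.
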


\begin{proof}
Fix $\varepsilon>0$.
By the definitions of $\check{\delta}_t^\mathfrak{b}(D)$ and $\check{\delta}_t^\mathfrak{b}(F)$, there are co-t-filtrations
\begin{equation*}
\begin{tikzcd}[column sep=tiny]
0 \ar[rr] & & * \ar[dl] \ar[rr] & & * \ar[dl] & \cdots & * \ar[rr] & & D \ar[dl]\\
& \Sigma^{k_1}D_1 \ar[ul,dashed] & & \Sigma^{k_2}D_2 \ar[ul,dashed] & & & & \Sigma^{k_m}D_m \ar[ul,dashed] &
\end{tikzcd}
\end{equation*}
and
\begin{equation*}
\begin{tikzcd}[column sep=tiny]
0 \ar[rr] & & * \ar[dl] \ar[rr] & & * \ar[dl] & \cdots & * \ar[rr] & & F \ar[dl]\\
& \Sigma^{k'_1}F_1 \ar[ul,dashed] & & \Sigma^{k'_2}F_2 \ar[ul,dashed] & & & & \Sigma^{k'_{m'}}F_{m'} \ar[ul,dashed] &
\end{tikzcd}
\end{equation*}
such that
\begin{equation*}
\sum_{i=1}^m \sigma_{\mathcal{C}_\mathfrak{b}}(D_i) e^{k_i t} < \check{\delta}_t^\mathfrak{b}(D) + \varepsilon,\quad \sum_{i=1}^{m'} \gamma_{\mathcal{C}_\mathfrak{b}}(F_i) e^{k'_i t} < \check{\delta}_t^\mathfrak{b}(F) + \varepsilon.
\end{equation*}
From these filtrations, as in the proof of Proposition \ref{t-ex-prop}, we get a filtration
\begin{equation}\label{not-c-filt}
\begin{tikzcd}[column sep=tiny]
0 \ar[rr] & & * \ar[dl] & \cdots & * \ar[rr] & & * \ar[dl] \ar[rr] & & * \ar[dl] & \cdots & * \ar[rr] &  & E \ar[dl]\\
& \Sigma^{k''_1}E_1 \ar[ul,dashed] & & & & \Sigma^{k''_m}E_m \ar[ul,dashed] & & \Sigma^{k''_{m+1}}E_{m+1} \ar[ul,dashed] & & & & \Sigma^{k''_{m''}}E_{m''} \ar[ul,dashed] &
\end{tikzcd}
\end{equation}
where $E_1 \coloneqq D_1,\dots,E_m \coloneqq D_m,E_{m+1} \coloneqq F_1,\dots,E_{m''} \coloneqq F_{m'}$ and $k''_1 \coloneqq k_1,\dots,k''_m \coloneqq k_m,k''_{m+1} \coloneqq k_1',\dots,k''_{m''} \coloneqq k'_{m'}$.

In general, the filtration \eqref{not-c-filt} is not a co-t-filtration, i.e., there may be some $1 \leq i < m''$ such that $k''_i \geq k''_{i+1}$.
However, in that case, we can exchange $\Sigma^{k''_i}E_i$ and $\Sigma^{k''_{i+1}}E_{i+1}$ without changing the other $\Sigma^{k''_j}E_j$'s.
To see this, consider the following part of the filtration \eqref{not-c-filt}
\begin{equation}\label{not-c-part}
\begin{tikzcd}[column sep=tiny]
A \ar[rr] & & B \ar[dl,"g"] \ar[rr] & & C \ar[dl]\\
& \Sigma^{k''_i}E_i \ar[ul,dashed] & & \Sigma^{k''_{i+1}}E_{i+1} \ar[ul,dashed,"f"] &
\end{tikzcd}.
\end{equation}
Then $\Sigma g \circ f \in \mathrm{Hom}(\Sigma^{k''_{i+1}}E_{i+1},\Sigma^{k''_i+1}E_i) = \mathrm{Hom}(E_{i+1},\Sigma^{k''_i-k''_{i+1}+1}E_i) = 0$ since $E_i,E_{i+1} \in \mathcal{C}_\mathfrak{b}$ and $k''_i-k''_{i+1}+1>0$.
Therefore, by Lemma \ref{ex-lem-2}, we obtain an exact triangle $A \to C \to \Sigma^{k''_i}E_i \oplus \Sigma^{k''_{i+1}}E_{i+1} \to \Sigma A$ and so we can replace \eqref{not-c-part} by
\begin{equation*}
\begin{tikzcd}[column sep=tiny]
A \ar[rr] & & C \ar[dl]\\
& \Sigma^{k''_i}(E_i \oplus E_{i+1}) \ar[ul,dashed] &
\end{tikzcd}
\end{equation*}
if $k''_i=k''_{i+1}$ or by
\begin{equation*}
\begin{tikzcd}[column sep=tiny]
A \ar[rr] & & * \ar[dl] \ar[rr] & & C \ar[dl]\\
& \Sigma^{k''_{i+1}}E_{i+1} \ar[ul,dashed] & & \Sigma^{k''_i}E_i \ar[ul,dashed] &
\end{tikzcd}
\end{equation*}
if $k''_i>k''_{i+1}$ using Lemma \ref{ex-lem-1}.

The above argument shows that $E$ has a co-t-filtration whose factors are $\Sigma^{k'''_1}E'_1,\dots,\Sigma^{k'''_{m'''}}E'_{m'''}$ such that
\begin{equation*}
\sum_{i=1}^{m'''} \sigma_{\mathcal{C}_\mathfrak{b}}(E'_i) e^{k'''_i t} = \sum_{i=1}^{m''} \sigma_{\mathcal{C}_\mathfrak{b}}(E_i) e^{k''_i t}.
\end{equation*}
Thus, by the definition of $\check{\delta}_t^\mathfrak{b}(E)$, we obtain
\begin{equation*}
\check{\delta}_t^\mathfrak{b}(E) \leq \sum_{i=1}^{m'''} \sigma_{\mathcal{C}_\mathfrak{b}}(E'_i) e^{k'''_i t} = \sum_{i=1}^{m''} \sigma_{\mathcal{C}_\mathfrak{b}}(E_i) e^{k''_i t} =\sum_{i=1}^m \sigma_{\mathcal{C}_\mathfrak{b}}(D_i) e^{k_i t} + \sum_{i=1}^{m'} \sigma_{\mathcal{C}_\mathfrak{b}}(F_i) e^{k'_i t} < \check{\delta}_t^\mathfrak{b}(D) + \check{\delta}_t^\mathfrak{b}(F) + 2\varepsilon.
\end{equation*}
The assertion then follows since $\varepsilon>0$ is arbitrary.
\end{proof}

The following is an analogue of Proposition \ref{t-upper}.

\begin{prop}\label{c-upper}
Let $\mathfrak{b}$ be a bounded co-t-structure on $\mathcal{D}$ with finite co-heart.
Then
\begin{equation*}
\check{\delta}_t^\mathfrak{b}(E) \leq \check{\delta}_t^\mathfrak{b}(D) \delta_t(D,E)
\end{equation*}
for any objects $D,E \in \mathcal{D}$.
\end{prop}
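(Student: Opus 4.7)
The plan is to adapt the proof of Proposition \ref{t-upper} to the co-t-structure setting, substituting Proposition \ref{c-ex-prop} for Proposition \ref{t-ex-prop}.

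If $E \notin \mathrm{thick}(D)$, then $\delta_t(D,E) = \infty$ and the inequality is trivial, so I may assume $E \in \mathrm{thick}(D)$. Fix $\varepsilon > 0$ and, by the definition of $\delta_t(D,E)$, choose a filtration of $E \oplus E'$ for some $E' \in \mathcal{D}$ whose successive cones are $\Sigma^{k_1}D, \ldots, \Sigma^{k_m}D$ and $\sum_{i=1}^m e^{k_i t} < \delta_t(D,E) + \varepsilon$. Applying Proposition \ref{c-ex-prop} iteratively along this filtration, together with the identity $\check{\delta}_t^\mathfrak{b}(\Sigma^k F) = e^{kt}\check{\delta}_t^\mathfrak{b}(F)$ (immediate from the definition), will yield
\begin{equation*}
\check{\delta}_t^\mathfrak{b}(E \oplus E') \leq \sum_{i=1}^m \check{\delta}_t^\mathfrak{b}(\Sigma^{k_i}D) = \check{\delta}_t^\mathfrak{b}(D)\sum_{i=1}^m e^{k_i t} < \check{\delta}_t^\mathfrak{b}(D)(\delta_t(D,E) + \varepsilon).
\end{equation*}

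The main obstacle is then to pass from $E \oplus E'$ to $E$, i.e., to establish the direct-summand monotonicity $\check{\delta}_t^\mathfrak{b}(E) \leq \check{\delta}_t^\mathfrak{b}(E \oplus E')$. In the t-structure version this was automatic because $\hat{\delta}_t^\mathfrak{a}$ is literally additive on direct sums via $H_\mathfrak{a}^{-k}(E \oplus E') = H_\mathfrak{a}^{-k}(E) \oplus H_\mathfrak{a}^{-k}(E')$ and the additivity of $\lambda_{\mathcal{H}_\mathfrak{a}}$. By contrast, co-t-filtrations are not canonical, so this step requires genuine work. My plan is to start from an arbitrary co-t-filtration $0 = X_0 \to X_1 \to \cdots \to X_m = E \oplus E'$ with factors $\Sigma^{k_i}C_i$ ($C_i \in \mathcal{C}_\mathfrak{b}$, $k_1 < \cdots < k_m$) and exploit the Hom-vanishing $\mathrm{Hom}(\Sigma^{k_i}C_i, \Sigma^{k_j}C_j) = 0$ for $i < j$, which follows from integrality of the shifts since $\Sigma^{k_i - k_j}C_i \in \mathcal{D}_{\geq k_j - k_i} \subset \mathcal{D}_{\geq 1}$ and $C_j \in \mathcal{D}_{\leq 0}$. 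A short induction along the filtration then yields $\mathrm{Hom}(X_i, \Sigma^{k_j}C_j) = 0$ for all $i < j$. Consequently, the split idempotent $p = (\mathrm{id}_E, 0)$ on $E \oplus E'$ can be lifted to a compatible family of endomorphisms $p_i$ of the $X_i$; invoking idempotent completeness of $\mathcal{D}$ (together with the Krull--Schmidt property of $\mathcal{C}_\mathfrak{b}$), one splits these endomorphisms to extract a filtration $0 = Y_0 \to Y_1 \to \cdots \to Y_m = E$ whose factors are of the form $\Sigma^{k_i}C_i'$ with $C_i'$ a direct summand of $C_i$, so that $\sigma_{\mathcal{C}_\mathfrak{b}}(C_i') \leq \sigma_{\mathcal{C}_\mathfrak{b}}(C_i)$. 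Infimizing over co-t-filtrations of $E \oplus E'$ delivers the desired monotonicity.

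Combining everything, $\check{\delta}_t^\mathfrak{b}(E) < \check{\delta}_t^\mathfrak{b}(D)(\delta_t(D,E) + \varepsilon)$, and letting $\varepsilon \to 0$ concludes the proof. The routine portion of the argument is a direct transcription of that of Proposition \ref{t-upper} with Proposition \ref{c-ex-prop} in the role of Proposition \ref{t-ex-prop}; the only genuinely new ingredient is the idempotent-splitting step, which is what makes essential use of the idempotent completeness assumption on $\mathcal{D}$.
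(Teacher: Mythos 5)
Your overall strategy is exactly what the paper intends (it simply says the proof is ``as in Proposition~\ref{t-upper} using Proposition~\ref{c-ex-prop}''), and you correctly isolate the one point where the transcription is not automatic: the direct-summand monotonicity $\check{\delta}_t^\mathfrak{b}(E) \leq \check{\delta}_t^\mathfrak{b}(E \oplus E')$. In the t-structure case this is immediate because the t-filtration is essentially unique and $\lambda_{\mathcal{H}_\mathfrak{a}}$ is additive on $H_\mathfrak{a}^k(E\oplus E') = H_\mathfrak{a}^k(E)\oplus H_\mathfrak{a}^k(E')$; in the co-t-structure case $\check{\delta}_t^\mathfrak{b}$ is an infimum over non-unique co-t-filtrations, and neither direction of the comparison with $E\oplus E'$ is free. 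That you flag this is to your credit — the paper's one-line reference to Proposition~\ref{t-upper} silently assumes it.

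However, the idempotent-lifting sketch you give for the monotonicity step has a real gap. The Hom-vanishing you prove gives existence of a lift $p_{i-1}$ of $p_i$ along $X_{i-1}\to X_i$, but not uniqueness: the ambiguity in $p_{i-1}$ is precisely morphisms $X_{i-1}\to X_{i-1}$ factoring through $\Sigma^{k_i-1}C_i$, and $\mathrm{Hom}(X_{i-1},\Sigma^{k_i-1}C_i)$ need not vanish (it can be nonzero already when $k_{i-1}=k_i-1$). Consequently the lifted $p_i$'s are merely endomorphisms that agree with an idempotent up to such an ambiguity; they need not themselves be idempotent, so ``splitting these endomorphisms'' is not directly available, and neither idempotent completeness of $\mathcal{D}$ nor the Krull--Schmidt property of $\mathcal{C}_\mathfrak{b}$ by itself produces the desired summand filtration from a non-idempotent $p_i$. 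A cleaner route, and one consistent with the techniques already in the paper, is to refine the co-t-filtration of $E\oplus E'$ so that every factor is a single indecomposable of $\mathcal{C}_\mathfrak{b}$ and then run a Gaussian-elimination argument in the spirit of Lemma~\ref{acyc-lem} to pass to a ``minimal'' co-t-filtration; Krull--Schmidtness of $\mathcal{C}_\mathfrak{b}$ then ensures that a minimal co-t-filtration of $E\oplus E'$ decomposes as the direct sum of minimal co-t-filtrations of $E$ and $E'$, giving $\check{\delta}_t^\mathfrak{b}(E\oplus E') = \check{\delta}_t^\mathfrak{b}(E)+\check{\delta}_t^\mathfrak{b}(E') \geq \check{\delta}_t^\mathfrak{b}(E)$ as required.
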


\begin{proof}
This can be proved as in the proof of Proposition \ref{t-upper} using Proposition \ref{c-ex-prop}.
\end{proof}

The following is an analogue of Lemma \ref{t-lower}.

\begin{lem}\label{c-lower}
Let $\mathfrak{b}$ be a bounded co-t-structure on $\mathcal{D}$ with finite co-heart.
Let $M_1,\dots,M_n \in \mathcal{C}_\mathfrak{b}$ be a complete set of pairwise non-isomorphic indecomposable objects and set $M \coloneqq M_1 \oplus \cdots \oplus M_n$.
Then for any $E \in \mathcal{D}$, we have
\begin{equation*}
\check{\delta}_t^\mathfrak{b}(E) \geq \delta_t(M,E).
\end{equation*}
\end{lem}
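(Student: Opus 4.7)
The plan is to mirror the proof of Lemma \ref{t-lower}, with an $\varepsilon$-argument inserted to handle the fact that $\check{\delta}_t^\mathfrak{b}$ is defined as an infimum rather than being computed from a canonical filtration. Fix $\varepsilon > 0$ and, by definition of $\check{\delta}_t^\mathfrak{b}(E)$, choose a co-t-filtration of $E$ with factors $\Sigma^{k_1}E_1,\dots,\Sigma^{k_m}E_m$ (with $E_i \in \mathcal{C}_\mathfrak{b}$ and $k_1<\cdots<k_m$) satisfying
\begin{equation*}
\sum_{i=1}^m \sigma_{\mathcal{C}_\mathfrak{b}}(E_i) e^{k_i t} < \check{\delta}_t^\mathfrak{b}(E) + \varepsilon.
\end{equation*}

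First, refine the filtration using the Krull--Schmidt property of $\mathcal{C}_\mathfrak{b}$. Each $E_i$ admits a decomposition $E_i \cong M_i^{(1)} \oplus \cdots \oplus M_i^{(\sigma_i)}$ with $M_i^{(j)} \in \{M_1,\dots,M_n\}$ and $\sigma_i \coloneqq \sigma_{\mathcal{C}_\mathfrak{b}}(E_i)$. This direct-sum decomposition yields an iterated sequence of split exact triangles whose factors are the $\Sigma^{k_i} M_i^{(j)}$, and repeated applications of Lemma \ref{ex-lem-1} insert them into the original co-t-filtration to produce a filtration of $E$ with factors $\Sigma^{k_i} M_i^{(j)}$ in the order dictated by $(i,j)$.

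Next, inflate each factor $\Sigma^{k_i} M_i^{(j)}$ to $\Sigma^{k_i} M$ by taking the direct sum of the relevant exact triangle with the trivial exact triangle $0 \to \Sigma^{k_i} N_i^{(j)} \to \Sigma^{k_i} N_i^{(j)} \to 0$ whose middle map is the identity, where $N_i^{(j)}$ denotes the complement of $M_i^{(j)}$ in $M$, so that $\Sigma^{k_i} M_i^{(j)} \oplus \Sigma^{k_i} N_i^{(j)} = \Sigma^{k_i} M$. Propagating the accumulated summand $\Sigma^{k_i} N_i^{(j)}$ through all subsequent steps (each time adjusting by the identity-type triangle, which preserves factors) results in a filtration of $E \oplus M'$ for some auxiliary $M'$ that is a direct sum of shifts of $M_1,\dots,M_n$, whose factors are now all of the form $\Sigma^{k_i} M$, with $\Sigma^{k_i} M$ appearing exactly $\sigma_i$ times.

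By the very definition of $\delta_t(M,E)$, this filtration of $E \oplus M'$ gives
\begin{equation*}
\delta_t(M,E) \leq \sum_{i=1}^m \sigma_i e^{k_i t} = \sum_{i=1}^m \sigma_{\mathcal{C}_\mathfrak{b}}(E_i) e^{k_i t} < \check{\delta}_t^\mathfrak{b}(E) + \varepsilon,
\end{equation*}
and letting $\varepsilon \to 0$ yields the claimed inequality. The only nontrivial point is the second step: tracking the accumulated complement summands coherently through all successive triangles in the filtration. However this is a direct bookkeeping analogue of the argument that produces the filtration of $E \oplus S'$ in the proof of Lemma \ref{t-lower}, with ``composition factor of a simple object'' replaced by ``indecomposable summand of a Krull--Schmidt object'', so no essentially new difficulty arises.
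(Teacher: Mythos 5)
Your proposal is correct and follows essentially the same route as the paper: fix $\varepsilon>0$, pick a near-optimal co-t-filtration, refine each factor $E_i$ into its indecomposable summands using Lemma \ref{ex-lem-1}, inflate each $\Sigma^{k_i}M_i^{(j)}$ to $\Sigma^{k_i}M$ by absorbing the complement into the object (yielding a filtration of $E \oplus M'$), compare with the definition of $\delta_t(M,E)$, and let $\varepsilon \to 0$. The bookkeeping step you flag is exactly the one the paper carries out in the analogous proof of Lemma \ref{t-lower}, so there is no gap.
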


\begin{proof}
Fix $\varepsilon>0$.
By the definition of $\check{\delta}_t^\mathfrak{b}(E)$, there is a co-t-filtration
\begin{equation*}
\begin{tikzcd}[column sep=tiny]
0 \ar[rr] & & * \ar[dl] \ar[rr] & & * \ar[dl] & \cdots & * \ar[rr] & & E \ar[dl]\\
& \Sigma^{k_1}E_1 \ar[ul,dashed] & & \Sigma^{k_2}E_2 \ar[ul,dashed] & & & & \Sigma^{k_m}E_m \ar[ul,dashed] &
\end{tikzcd}
\end{equation*}
such that
\begin{equation*}
\sum_{i=1}^m \sigma_{\mathcal{C}_\mathfrak{b}}(E_i) e^{k_i t} < \check{\delta}_t^\mathfrak{b}(E) + \varepsilon.
\end{equation*}
Using direct sum decompositions of $E_1,\dots,E_m \in \mathcal{C}_\mathfrak{b}$, we can produce a filtration
\begin{equation*}
\begin{tikzcd}[column sep=tiny]
0 \ar[rr] & & * \ar[dl] & \cdots & * \ar[rr] & & * \ar[dl] & \cdots & * \ar[rr] & & * \ar[dl] & \cdots & * \ar[rr] &  & E \ar[dl]\\
& \Sigma^{k_1}M_1^{(1)} \ar[ul,dashed] & & & & \Sigma^{k_1}M_1^{(\sigma_{\mathcal{C}_\mathfrak{b}}(E_1))} \ar[ul,dashed] & & & & \Sigma^{k_m}M_m^{(1)} \ar[ul,dashed] & & & & \Sigma^{k_m}M_m^{(\sigma_{\mathcal{C}_\mathfrak{b}}(E_m))} \ar[ul,dashed] &
\end{tikzcd}
\end{equation*}
using Lemma \ref{ex-lem-1} where $M_i^{(j)} \in \{M_1,\dots,M_n\}$ are the indecomposable direct summands of $E_i$.
From this, we obtain a filtration
\begin{equation*}
\begin{tikzcd}[column sep=tiny]
0 \ar[rr] & & * \ar[dl] & \cdots & * \ar[rr] & & * \ar[dl] & \cdots & * \ar[rr] & & * \ar[dl] & \cdots & * \ar[rr] &  & E \oplus M' \ar[dl]\\
& \Sigma^{k_1}M \ar[ul,dashed] & & & & \Sigma^{k_1}M \ar[ul,dashed] & & & & \Sigma^{k_m}M \ar[ul,dashed] & & & & \Sigma^{k_m}M \ar[ul,dashed] &
\end{tikzcd}
\end{equation*}
for some $M' \in \mathcal{D}$ which can be written as a direct sum of some shifts of $M_1,\dots,M_n$.
This shows that
\begin{equation*}
\check{\delta}_t^\mathfrak{b}(E) + \varepsilon > \sum_{i=1}^m \sigma_{\mathcal{C}_\mathfrak{b}}(E_i) e^{k_i t} \geq \delta_t(M,E)
\end{equation*}
where the second inequality follows from the definition of $\delta_t(M,E)$.
Since $\varepsilon>0$ is arbitrary, the lemma follows.
\end{proof}

\begin{proof}[Proof of Theorem \ref{c-thm-1}]
This can be proved as in the proof of Theorem \ref{t-thm-1} using Proposition \ref{c-upper} and Lemma \ref{c-lower}.
\end{proof}

\subsection{Entropy as dimension growth via co-t-structures}

In this section, we give an interpretation of the categorical entropy as the exponential growth rate of the dimensions of a sequence of some vector spaces.

Let $\mathcal{D}$ be a $\mathbf{K}$-linear idempotent complete triangulated category.
The following theorem corresponds to Theorem \ref{t-thm-2}.

\begin{thm}\label{c-thm-2}
Let $\mathfrak{b}$ be a bounded co-t-structure on $\mathcal{D}$ with finite co-heart and $\Gamma : \mathcal{D} \to D^b(\mathbf{K})$ (resp. $\Gamma : \mathcal{D}^\mathrm{op} \to D^b(\mathbf{K})$) be an exact functor.
For an exact endofunctor $\Phi : \mathcal{D} \to \mathcal{D}$, we have
\begin{gather*}
\check{h}_t^\mathfrak{b}(\Phi) \geq \limsup_{N \to \infty} \frac{1}{N} \log \sum_{k \in \mathbf{Z}} \dim H^{-k}(\Gamma(\Phi^N(E))) e^{kt}\\
\left( \text{resp. } \check{h}_t^\mathfrak{b}(\Phi) \geq \limsup_{N \to \infty} \frac{1}{N} \log \sum_{k \in \mathbf{Z}} \dim H^{-k}(\Gamma(\Phi^N(E))) e^{-kt} \right)
\end{gather*}
for any object $E \in \mathcal{D}$.
Moreover we have
\begin{gather*}
\check{h}_t^\mathfrak{b}(\Phi) = \lim_{N \to \infty} \frac{1}{N} \log \sum_{k \in \mathbf{Z}} \dim H^{-k}(\Gamma(\Phi^N(G))) e^{kt}\\
\left( \text{resp. } \check{h}_t^\mathfrak{b}(\Phi) = \lim_{N \to \infty} \frac{1}{N} \log \sum_{k \in \mathbf{Z}} \dim H^{-k}(\Gamma(\Phi^N(G))) e^{-kt} \right)
\end{gather*}
for any split-generator $G$ of $\mathcal{D}$ provided that $\Gamma(E) \cong \mathbf{K}$ (concentrated in degree 0) for every indecomposable object $E \in \mathcal{C}_\mathfrak{b}$ and, for any indecomposable objects $E,F \in \mathcal{C}_\mathfrak{b}$, a morphism $f \in \mathrm{Hom}(E,F)$ is an isomorphism whenever $\Gamma(f) \neq 0$.
\end{thm}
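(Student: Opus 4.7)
The plan is to mirror the argument for Theorem~\ref{t-thm-2}, substituting Lemma~\ref{c-lower} for Lemma~\ref{t-lower} and invoking Theorem~\ref{c-thm-1} to translate asymptotic statements about $\check{\delta}_t^\mathfrak{b}$ into asymptotic statements about $\check{h}_t^\mathfrak{b}$. Let $M_1,\ldots,M_n$ be representatives of the iso-classes of indecomposables of $\mathcal{C}_\mathfrak{b}$, and set $M := M_1\oplus\cdots\oplus M_n$.

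For the lower bound, Lemma~\ref{c-lower} combined with Lemma~\ref{dhkk-lem}~(1),(3) gives, for every $V\in\mathcal{D}$,
\[
\check{\delta}_t^\mathfrak{b}(V) \;\geq\; \delta_t(M,V) \;\geq\; \delta_t(\Gamma(M),\Gamma(V)) \;\geq\; \frac{\delta_t(\mathbf{K},\Gamma(V))}{\delta_t(\mathbf{K},\Gamma(M))}.
\]
If $\Gamma\cong 0$ the bound is vacuous; otherwise $\Gamma(M)\cong\mathbf{K}^n$ makes the denominator a positive constant, and Lemma~\ref{lin-lem} identifies $\delta_t(\mathbf{K},\Gamma(V))$ with $\sum_k\dim H^{-k}(\Gamma(V))e^{kt}$. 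Specializing to $V=\Phi^N(E)$ and taking $\limsup\tfrac{1}{N}\log(-)$ produces the asserted inequality. The variant for $\Gamma:\mathcal{D}^\mathrm{op}\to D^b(\mathbf{K})$ follows by applying the just-proved version to $(\mathcal{D}^\mathrm{op},\Phi^\mathrm{op},\mathfrak{b}^\mathrm{op})$ (whose co-heart coincides with $\mathcal{C}_\mathfrak{b}$), together with Lemma~\ref{op-ent} and Theorem~\ref{c-thm-1}.

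For the equality under the additional hypotheses on $\Gamma$, by Theorem~\ref{c-thm-1} it suffices to establish the pointwise identity $\check{\delta}_t^\mathfrak{b}(V) = \sum_k\dim H^{-k}(\Gamma(V))e^{kt}$. The inequality $\geq$ is straightforward: any co-t-filtration of $V$ with factors $\Sigma^{k_i}E_i$ maps under $\Gamma$ to a filtration of $\Gamma(V)$ with factors $\Sigma^{k_i}\mathbf{K}^{\sigma_{\mathcal{C}_\mathfrak{b}}(E_i)}$, and iterated long exact cohomology sequences bound the cohomology sum above by the naive sum $\sum_i\sigma_{\mathcal{C}_\mathfrak{b}}(E_i)e^{k_it}$; infimizing over co-t-filtrations gives the $\geq$ direction. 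For the converse, start from any co-t-filtration of $V$ refined so that each factor is an indecomposable of $\mathcal{C}_\mathfrak{b}$, and apply the following cancellation procedure: whenever two consecutive indecomposable factors $\Sigma^{k}M_\alpha,\Sigma^{k+1}M_\beta$ admit a component of their connecting morphism $M_\beta\to M_\alpha$ that is nonzero, the second hypothesis on $\Gamma$ forces it to be an isomorphism, whose cone is zero; the octahedral axiom with Lemmas~\ref{ex-lem-1} and~\ref{ex-lem-2} then excises this indecomposable pair and produces a co-t-filtration of $V$ with naive sum strictly decreased by $e^{kt}(1+e^t)$. Iterating terminates, and the resulting co-t-filtration has every connecting morphism between consecutive layers killed by $\Gamma$; its image splits in $D^b(\mathbf{K})$, so the naive sum coincides with $\sum_k\dim H^{-k}(\Gamma(V))e^{kt}$, yielding the $\leq$ direction.

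The main obstacle is the multi-summand cancellation step. In the t-structure analogue of Theorem~\ref{t-thm-2}, the strictly decreasing shifts of a t-filtration force every connecting morphism in the $\Gamma$-image to vanish for degree reasons, making the cohomology-naive-sum identification immediate. Here, the strictly increasing shifts of a co-t-filtration permit nontrivial connecting morphisms at consecutive levels, and bringing the connecting morphism between $E_i$ and $E_{i+1}$ (for $k_{i+1}=k_i+1$) into a block form with a maximal identity block---via the Krull--Schmidt structure of $\mathcal{C}_\mathfrak{b}$ and a Fitting-type argument using the algebraic closedness of $\mathbf{K}$---is the delicate technical input that carries the proof.
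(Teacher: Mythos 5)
Your overall strategy matches the paper's: prove the lower bound via Lemmas~\ref{c-lower}, \ref{dhkk-lem} and \ref{lin-lem}, reduce the opposite case via Lemma~\ref{op-ent}, and establish the equality by producing a co-t-filtration of $\Phi^N(G)$ whose $\Gamma$-image has all consecutive connecting morphisms vanishing, so its cohomology equals the naive sum. Repackaging the equality part as a pointwise identity $\check{\delta}_t^\mathfrak{b}(V)=\sum_k\dim H^{-k}(\Gamma(V))e^{kt}$ is a clean reformulation of what the paper implicitly shows per $N$, and the $\geq$ direction via iterated long exact sequences is correct.

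However, your cancellation criterion is stated incorrectly in a way that would break the argument if followed literally. You write that a \emph{nonzero} component $M_\beta\to M_\alpha$ of the connecting morphism is forced to be an isomorphism by the hypothesis on $\Gamma$. That hypothesis says only that $\Gamma(f)\neq 0$ implies $f$ is an isomorphism; it does not say $f\neq 0$ implies $f$ is an isomorphism, nor that $f\neq 0$ implies $\Gamma(f)\neq 0$. A nonzero non-isomorphism $f$ with $\Gamma(f)=0$ is perfectly possible (e.g.\ a nilpotent endomorphism of $M_\alpha=M_\beta$), and for such $f$ your excision step has nothing to excise---the cone is not zero. The correct criterion, which the paper uses, is to cancel precisely when the $\Gamma$-image of the connecting morphism is nonzero; the hypothesis then guarantees an isomorphism block, Lemma~\ref{acyc-lem} peels it off, and the procedure terminates with all surviving connecting morphisms killed by $\Gamma$ (as your last sentence correctly states, in tension with your stated criterion). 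Additionally, your appeal to a Fitting-type decomposition and the algebraic closedness of $\mathbf{K}$ is unnecessary here: Theorem~\ref{c-thm-2} is stated purely in terms of the hypotheses on $\Gamma$ and needs only the Krull--Schmidt property of $\mathcal{C}_\mathfrak{b}$ and the elementary Gaussian-elimination extraction of the isomorphism block; algebraic closedness only enters later, in Remark~\ref{alg-cl}, to verify those hypotheses for the ST-triple application. Finally, in the lower-bound step you invoke $\Gamma(M)\cong\mathbf{K}^n$, but the first inequality of the theorem is asserted for an arbitrary exact $\Gamma$; you only need that $\delta_t(\mathbf{K},\Gamma(M))$ is a finite positive constant when $\Gamma\not\cong 0$, which holds without the extra hypothesis.
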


\begin{lem}\label{acyc-lem}
 Suppose we have a diagram
\begin{equation*}
\begin{tikzcd}[column sep=tiny]
A \ar[rr] & & * \ar[dl,"g"] \ar[rr] & & B \ar[dl]\\
& \Sigma^{-1}(F \oplus F') \ar[ul,dashed] & & E \oplus E' \ar[ul,dashed,"f"] &
\end{tikzcd}
\end{equation*}
consisting of exact triangles such that the $\mathrm{Hom}(E,F)$-component of the morphism $\Sigma g \circ f \in \mathrm{Hom}(E \oplus E',F \oplus F')$ is an isomorphism.
Then there exists a diagram
\begin{equation*}
\begin{tikzcd}[column sep=tiny]
A \ar[rr] & & * \ar[dl] \ar[rr] & & B \ar[dl]\\
& \Sigma^{-1}F' \ar[ul,dashed] & & E'' \ar[ul,dashed] &
\end{tikzcd}
\end{equation*}
consisting of exact triangles such that $E'' \cong E'$.
\end{lem}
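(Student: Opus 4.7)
The plan is to collapse the two-step filtration into a single exact triangle, simplify its connecting map by block-diagonalization exploiting the isomorphism hypothesis, and then re-expand.

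First, I apply Lemma \ref{ex-lem-2} to the given diagram to obtain an exact triangle $A \to B \to C \to \Sigma A$ in which $C$ fits into the exact triangle
\[
\Sigma^{-1}(F \oplus F') \to C \to E \oplus E' \xrightarrow{\eta} F \oplus F',
\]
where $\eta := \Sigma g \circ f$. Writing $\eta$ as a matrix $\bigl(\begin{smallmatrix} \alpha & \beta \\ \gamma & \delta \end{smallmatrix}\bigr)$ with respect to the direct sum decompositions, the hypothesis asserts that the $(1,1)$-entry $\alpha : E \to F$ is an isomorphism.

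Next, I perform a Gaussian-elimination-style block-diagonalization. The automorphism $\phi = \bigl(\begin{smallmatrix} 1 & -\alpha^{-1}\beta \\ 0 & 1 \end{smallmatrix}\bigr)$ of $E \oplus E'$ and the automorphism $\psi = \bigl(\begin{smallmatrix} 1 & 0 \\ -\gamma\alpha^{-1} & 1 \end{smallmatrix}\bigr)$ of $F \oplus F'$ satisfy $\psi \circ \eta \circ \phi = \alpha \oplus \delta'$, where $\delta' := \delta - \gamma\alpha^{-1}\beta : E' \to F'$. Since $\phi$ and $\psi$ are isomorphisms, the morphisms $\eta$ and $\alpha \oplus \delta'$ have isomorphic fibers; moreover the fiber of a direct sum of morphisms is the direct sum of the fibers, and the fiber of the isomorphism $\alpha$ is zero. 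Therefore
\[
C \cong \mathrm{fib}(\eta) \cong \mathrm{fib}(\alpha) \oplus \mathrm{fib}(\delta') \cong \mathrm{fib}(\delta'),
\]
so $C$ fits into an exact triangle $\Sigma^{-1} F' \to C \to E' \xrightarrow{\delta'} F'$. Finally, applying Lemma \ref{ex-lem-1} to the triangle $A \to B \to C \to \Sigma A$ together with this new factorization of $C$ yields precisely the desired diagram, with $E'' := E'$.

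The main obstacle lies in rigorously justifying the block-diagonalization step inside the triangulated category: one must confirm that the automorphisms $\phi, \psi$ fit into a commutative square identifying $\eta$ with $\alpha \oplus \delta'$ as morphisms (so that their fibers coincide up to isomorphism), and that additivity of distinguished triangles gives $\mathrm{fib}(\alpha \oplus \delta') \cong \mathrm{fib}(\alpha) \oplus \mathrm{fib}(\delta')$ with $\mathrm{fib}(\alpha) \cong 0$. Everything else is formal manipulation using Lemmas \ref{ex-lem-1} and \ref{ex-lem-2}.
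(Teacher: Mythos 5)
Your proof is correct, and it takes a genuinely different route from the one in the paper. The paper works entirely with the octahedral axiom: after collapsing to the single triangle $\Sigma^{-1}(F\oplus F')\to C\to E\oplus E'\xrightarrow{\Sigma g\circ f}F\oplus F'$ (same first step as you), it applies the octahedron once to the composition $\Sigma^{-1}F'\to\Sigma^{-1}(F\oplus F')\to C$ to split off the $\Sigma^{-1}F'$ factor and produce an auxiliary object $E''$ sitting in a triangle $\Sigma^{-1}F\to E''\to E\oplus E'\xrightarrow{h}F$, and then applies the octahedron a second time to the composition $E\hookrightarrow E\oplus E'\xrightarrow{h}F$ (which is the isomorphism $\alpha$) to conclude $E''\cong E'$. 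You instead block-diagonalize the connecting map $\eta$ by the elementary automorphisms $\phi,\psi$, reducing to $\alpha\oplus\delta'$, and then use additivity of distinguished triangles together with $\operatorname{cone}(\alpha)\cong 0$. Both proofs cancel the invertible $\alpha$-block; your version trades two octahedral applications for a Gaussian-elimination argument plus the standard facts (TR3 combined with the triangulated five lemma to compare cones along the commuting square, additivity of triangles, cone of an isomorphism vanishes), which you correctly flag as the points needing spelling out. Your route is arguably more transparent to a reader comfortable with linear-algebraic manipulations in additive categories, while the paper's route stays closer to the raw triangulated-category axioms and avoids invoking $\alpha^{-1}$ explicitly; either is acceptable, and yours fills in cleanly once the three cited standard facts are recorded.
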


\begin{proof}
By Lemma \ref{ex-lem-2}, there is an exact triangle $A \to B \to C \to \Sigma A$ where $C$ fits into the exact triangle $\Sigma^{-1}(F \oplus F') \to C \to E \oplus E' \overset{\Sigma g \circ f}{\longrightarrow} \Sigma F \oplus F'$.
Then, by the octahedral axiom, we have a diagram
\begin{equation*}
\begin{tikzcd}
\Sigma^{-1}F' \ar[r] \ar[d,equal] & \Sigma^{-1}(F \oplus F') \ar[d] \ar[r] & \Sigma^{-1}F \ar[r] \ar[d] & F' \ar[d,equal]\\
\Sigma^{-1}F' \ar[r] & C \ar[r] \ar[d] & E'' \ar[r] \ar[d] & F'\\
& E \oplus E' \ar[r,equal] \ar[d,"\Sigma g \circ f",swap] & E \oplus E' \ar[d,"h"] &\\
& F \oplus F' \ar[r] & F &
\end{tikzcd}
\end{equation*}
where the morphism $\Sigma^{-1}F' \to \Sigma^{-1}(F \oplus F')$ in this diagram is the inclusion.
On the other hand, by assumption, precomposing $h : E \oplus E' \to F$ with the inclusion $E \to E \oplus E'$ is an isomorphism.
Again by the octahedral axiom, we get a diagram
\begin{equation*}
\begin{tikzcd}
& \Sigma^{-1}F \ar[r] \ar[d] & 0\ar[d] &\\
& E'' \ar[r,equal] \ar[d] & E'' \ar[d] &\\
E \ar[r] \ar[d,equal] & E \oplus E' \ar[d,"h"] \ar[r] & E' \ar[r] \ar[d] & \Sigma E \ar[d,equal]\\
E \ar[r] & F \ar[r] & 0 \ar[r] & \Sigma E
\end{tikzcd}
\end{equation*}
and hence $E'' \cong E'$.
We obtain the desired diagram by applying Lemma \ref{ex-lem-1} to two exact triangles $A \to B \to C \to \Sigma A$ and $\Sigma^{-1}F' \to C \to E'' \to F'$.
\end{proof}

\begin{proof}[Proof of Theorem \ref{c-thm-2}]
As in the proof of Theorem \ref{t-thm-2}, it is enough to prove the theorem for an exact functor $\Gamma : \mathcal{D} \to D^b(\mathbf{K})$.
Also the lower bound for $\check{h}_t^\mathfrak{b}(\Phi)$ can be proved as in the proof of Theorem \ref{t-thm-2}.

Suppose we have an exact functor $\Gamma : \mathcal{D} \to D^b(\mathbf{K})$ such that $\Gamma(E) \cong \mathbf{K}$ for every indecomposable object $E \in \mathcal{C}_\mathfrak{b}$ and, for any indecomposable objects $E,F \in \mathcal{C}_\mathfrak{b}$, a morphism $f \in \mathrm{Hom}(E,F)$ is an isomorphism whenever $\Gamma(f) \neq 0$.
Let $M_1,\dots,M_n \in \mathcal{C}_\mathfrak{b}$ be a complete set of pairwise non-isomorphic indecomposable objects.

Let us first see what the assumption on $\Gamma$ implies.
For $E,F \in \mathcal{C}_\mathfrak{b}$, consider a morphism $f \in \mathrm{Hom}(E,F)$.
Fix direct sum decompositions $E \cong M_1^{\oplus k_1} \oplus \cdots \oplus M_n^{\oplus k_n}$ and $F \cong M_1^{\oplus l_1} \oplus \cdots \oplus M_n^{\oplus l_n}$.
If $\Gamma(f) \neq 0$ then it has a $\mathrm{Hom}(\mathbf{K},\mathbf{K})$-component which is non-zero (and thus an isomorphism).
Then the assumption on $\Gamma$ implies that it must come from a $\mathrm{Hom}(M_i,M_i)$-component of $f$ which is an isomorphism for some $1 \leq i \leq n$.
To sum up, this shows that if $\Gamma(f) \neq 0$ then there exist direct sum decompositions $E \cong M_i \oplus E'$ and $F \cong M_i \oplus F'$ for some $1 \leq i \leq n$ and $E',F' \in \mathcal{C}_\mathfrak{b}$ such that the $\mathrm{Hom}(M_i,M_i)$-component of $f$ in these direct sum decompositions is an isomorphism.

Now let $G$ be a split-generator of $\mathcal{D}$.
Take a co-t-filtration
\begin{equation}\label{p-filt}
\begin{tikzcd}[column sep=tiny]
0 \ar[rr] & & * \ar[dl,"g_1"] \ar[rr] & & * \ar[dl,"g_2"] & \cdots & * \ar[rr] & & \Phi^N(G) \ar[dl]\\
& \Sigma^{k_1}E_1 \ar[ul,dashed] & & \Sigma^{k_2}E_2 \ar[ul,dashed,"f_2"] & & & & \Sigma^{k_m}E_m \ar[ul,dashed,"f_m"] &
\end{tikzcd}.
\end{equation}
Then, applying $\Gamma$ to the filtration \eqref{p-filt}, we obtain
\begin{equation}\label{v-filt}
\begin{tikzcd}[column sep=tiny]
0 \ar[rr] & & * \ar[dl] \ar[rr] & & * \ar[dl] & \cdots & * \ar[rr] & & \Gamma(\Phi^N(G)) \ar[dl]\\
& \Sigma^{k_1}\mathbf{K}^{\sigma_1} \ar[ul,dashed] & & \Sigma^{k_2}\mathbf{K}^{\sigma_2} \ar[ul,dashed] & & & & \Sigma^{k_m}\mathbf{K}^{\sigma_m} \ar[ul,dashed] &
\end{tikzcd}
\end{equation}
where $\sigma_i \coloneqq \sigma_{\mathcal{C}_\mathfrak{b}}(E_i)$.

(Case 1)
Suppose $\Gamma(\Sigma g_i \circ f_{i+1}) = 0$ for every $1 \leq i < m$.
Then we see that
\begin{equation*}
H^{-k_i}(\Gamma(\Phi^N(G))) \cong \mathbf{K}^{\sigma_i}
\end{equation*}
for every $1 \leq i \leq m$ and $H^{-k}(\Gamma(\Phi^N(G))) \cong 0$ for every $k \not\in \{k_1,\dots,k_m\}$.
Consequently, we have
\begin{equation*}
\check{\delta}_t^\mathfrak{b}(\Phi^N(G)) \leq \sum_{i=1}^m \sigma_{\mathcal{C}_\mathfrak{b}}(E_i) e^{k_it} = \sum_{i=1}^m \sigma_i e^{k_it} = \sum_{k \in \mathbf{Z}} \dim H^{-k}(\Gamma(\Phi^N(G))) e^{kt}
\end{equation*}
and therefore
\begin{equation*}
\limsup_{N \to \infty} \frac{1}{N} \log \sum_{k \in \mathbf{Z}} \dim H^{-k}(\Gamma(\Phi^N(G))) e^{kt} \leq \check{h}_t^\mathfrak{b}(\Phi) \leq \liminf_{N \to \infty} \frac{1}{N} \log \sum_{k \in \mathbf{Z}} \dim H^{-k}(\Gamma(\Phi^N(G))) e^{kt}
\end{equation*}
where the first inequality follows from the former part of Theorem \ref{c-thm-2}.
This shows that the limit of the sequence $\left\{ \frac{1}{N} \log \sum_{k \in \mathbf{Z}} \dim H^{-k}(\Gamma(\Phi^N(G))) e^{kt} \right\}$ exists and it coincides with $\check{h}_t^\mathfrak{b}(\Phi)$.

(Case 2)
Suppose $\Gamma(\Sigma g_j \circ f_{j+1}) \neq 0$ for some $1 \leq j < m$.
Then we must have $k_j = k_{j+1}-1$.
As mentioned above, the assumption on $\Gamma$ and $\Gamma(\Sigma g_j \circ f_{j+1}) \neq 0$ imply that there exist direct sum decompositions $E_{j+1} \cong M_i \oplus E'_{j+1}$ and $E_j \cong M_i \oplus E'_j$ for some $1 \leq i \leq n$ and $E'_j,E'_{j+1} \in \mathcal{C}_\mathfrak{b}$ such that the $\mathrm{Hom}(\Sigma^{k_{j+1}}M_i,\Sigma^{k_{j+1}}M_i)$-component of $\Sigma g_j \circ f_{j+1}$ in these direct sum decompositions is an isomorphism.
Then, applying Lemma \ref{acyc-lem}, we can replace the following part of the filtration \eqref{v-filt}
\begin{equation*}
\begin{tikzcd}[column sep=tiny]
* \ar[rr] & & * \ar[dl] \ar[rr] & & * \ar[dl]\\
& \Sigma^{k_j}\mathbf{K}^{\sigma_j} \ar[ul,dashed] & & \Sigma^{k_{j+1}}\mathbf{K}^{\sigma_{j+1}} \ar[ul,dashed] &
\end{tikzcd}
\quad\text{by}\quad
\begin{tikzcd}[column sep=tiny]
* \ar[rr] & & * \ar[dl] \ar[rr] & & * \ar[dl]\\
& \Sigma^{k_j}\mathbf{K}^{\sigma_j-1} \ar[ul,dashed] & & \Sigma^{k_{j+1}}\mathbf{K}^{\sigma_{j+1}-1} \ar[ul,dashed] &
\end{tikzcd}
\end{equation*}
and simultaneously the following part of the filtration \eqref{p-filt}
\begin{equation*}
\begin{tikzcd}[column sep=tiny]
* \ar[rr] & & * \ar[dl] \ar[rr] & & * \ar[dl]\\
& \Sigma^{k_j}E_j \ar[ul,dashed] & & \Sigma^{k_{j+1}}E_{j+1} \ar[ul,dashed] &
\end{tikzcd}
\quad\text{by}\quad
\begin{tikzcd}[column sep=tiny]
* \ar[rr] & & * \ar[dl] \ar[rr] & & * \ar[dl]\\
& \Sigma^{k_j}E'_j \ar[ul,dashed] & & \Sigma^{k_{j+1}}E'_{j+1} \ar[ul,dashed] &
\end{tikzcd}.
\end{equation*}
Clearly the resulting filtration is again a co-t-filtration of $\Phi^N(G)$.
Note also that $\sigma_{\mathcal{C}_\mathfrak{b}}(E'_j) = \sigma_j-1$ and $\sigma_{\mathcal{C}_\mathfrak{b}}(E'_{j+1}) = \sigma_{j+1}-1$.
We can repeat this process until the resulting co-t-filtration satisfies the assumption of (Case 1).
\end{proof}

\section{Applications}

\subsection{ST-triples}

In this section, we give an application of the main theorems of this paper, in particular, Theorems \ref{t-thm-2} and \ref{c-thm-2}.
It is also one of the motivations of this paper.

Let $\mathcal{D}$ be a $\mathbf{K}$-linear idempotent complete algebraic triangulated category.
An object $M \in \mathcal{D}$ is called a {\em silting object} if it is a split-generator of $\mathcal{D}$ and satisfies $\mathrm{Hom}(M,\Sigma^k M) = 0$ for every $k>0$.
For a full subcategory $\mathcal{C} \subset \mathcal{D}$ and an object $M \in \mathcal{D}$, we define
\begin{gather*}
\mathcal{C}_M^{\leq 0} \coloneqq \{ E \in \mathcal{C} \,|\, \mathrm{Hom}(M,\Sigma^kE) = 0 \text{ for all } k > 0 \},\\
\mathcal{C}_M^{\geq 0} \coloneqq \{ E \in \mathcal{C} \,|\, \mathrm{Hom}(M,\Sigma^kE) = 0 \text{ for all } k < 0 \}.
\end{gather*}

\begin{dfn}[{\cite[Definition 4.3 and Remark 4.5]{AMY}}]\label{st-triple}
An {\em ST-triple} $(\mathcal{B},\mathcal{A},M)$ inside $\mathcal{D}$ consists of thick subcategories $\mathcal{A},\mathcal{B}$ of $\mathcal{D}$ and a silting object $M \in \mathcal{B}$ satisfying the following conditions:
\begin{enumerate}
\item $\mathrm{Hom}(M,E)$ is finite-dimensional for every $E \in \mathcal{D}$.
\item $\bar{\mathfrak{a}} \coloneqq (\mathcal{D}_M^{\leq 0},\mathcal{D}_M^{\geq 0})$ is a t-structure on $\mathcal{D}$.
\item $\mathcal{D}_M^{\geq 0} \subset \mathcal{A}$ and $\mathfrak{a} \coloneqq (\mathcal{A}_M^{\leq 0},\mathcal{A}_M^{\geq 0})$ is a bounded t-structure on $\mathcal{A}$.
\end{enumerate}
\end{dfn}

\begin{rmk}
When $(\mathcal{B},\mathcal{A},M)$ is an ST-triple inside $\mathcal{D}$, the pair $(\mathcal{B},\mathcal{A})$ is called an {\em ST-pair} inside $\mathcal{D}$.
It is known that if $(\mathcal{B},\mathcal{A})$ is an ST-pair then $(\mathcal{B},\mathcal{A},M)$ is an ST-triple for any silting object $M \in \mathcal{B}$ (see \cite[Proposition 5.2]{AMY}).
\end{rmk}

Let $A$ be a dg $\mathbf{K}$-algebra.
We write its derived category by $D(A)$.
The {\em perfect derived category} $\mathrm{per}(A)$ is defined to be $\mathrm{thick}(A)$ and the {\em finite-dimensional derived category} $D_\mathrm{fd}(A)$ is defined as the full subcategory of $D(A)$ consisting of dg $A$-modules $M$ such that $H^*(M)$ is finite-dimensional.
We also define $D_\mathrm{fd}^-(A)$ as the full subcategory of $D(A)$ consisting of dg $A$-modules $M$ such that $H^k(M)$ is finite-dimensional for every $k \in \mathbf{Z}$ and $H^k(M)=0$ for all $k \gg 0$.

\begin{exa}\label{st-ex}
(1) (\cite[Proposition 6.12]{AMY})
Let $A$ be a dg $\mathbf{K}$-algebra satisfying the following conditions:
\begin{itemize}
\item $H^k(A)=0$ for every $k>0$.
\item $H^k(A)$ is finite-dimensional for every $k \in \mathbf{Z}$.
\end{itemize}
Then $(\mathrm{per}(A),D_\mathrm{fd}(A),A)$ is an ST-triple inside $D_\mathrm{fd}^-(A)$.
Note that the t-structure $\bar{\mathfrak{a}} = (D_\mathrm{fd}^-(A)_A^{\leq 0},D_\mathrm{fd}^-(A)_A^{\geq 0})$ on $D_\mathrm{fd}^-(A)$ in Definition \ref{st-triple} (2) is the standard t-structure
\begin{gather*}
D_\mathrm{fd}^-(A)_A^{\leq 0} = \{ M \in D_\mathrm{fd}^-(A) \,|\, H^k(M) = 0 \text{ for all } k > 0 \},\\
D_\mathrm{fd}^-(A)_A^{\geq 0} = \{ M \in D_\mathrm{fd}^-(A) \,|\, H^k(M) = 0 \text{ for all } k < 0 \}.
\end{gather*}
Also note that $\mathrm{per}(A)$ and $D_\mathrm{fd}(A)$ need not be comparable unlike the following two special cases.

(2) (\cite[Lemma 4.15]{AMY}, \cite[Section 2]{Ami}, \cite[Proposition 2.1]{KY1})
Let $A$ be a dg $\mathbf{K}$-algebra satisfying the following conditions:
\begin{itemize}
\item $H^k(A)=0$ for every $k>0$.
\item $H^0(A)$ is finite-dimensional.
\item $D_\mathrm{fd}(A) \subset \mathrm{per}(A)$.
\end{itemize}
Then $(\mathrm{per}(A),D_\mathrm{fd}(A),A)$ is an ST-triple inside $\mathrm{per}(A)$.
This example includes the {\em Ginzburg dg algebra} associated to a quiver with potential (see \cite{Gin,KY2}).

Note that the condition $D_\mathrm{fd}(A) \subset \mathrm{per}(A)$ is satisfied if $A$ is {\em smooth} in the sense that $A$, regarded as a dg $A$-$A$-bimodule, is an object of $\mathrm{per}(A^\mathrm{op} \otimes A)$ (see the proof of \cite[Lemma 4.1]{Kel2}).

(3) (\cite[Lemma 4.13]{AMY}, \cite[Proposition 2.1]{KY1})
Let $A$ be a dg $\mathbf{K}$-algebra satisfying the following conditions:
\begin{itemize}
\item $H^k(A)=0$ for every $k>0$.
\item $H^*(A)$ is finite-dimensional.
\end{itemize}
Then $(\mathrm{per}(A),D_\mathrm{fd}(A),A)$ is an ST-triple inside $D_\mathrm{fd}(A)$.
In particular, if $A$ is concentrated in degree 0, we can regard it as a finite-dimensional $\mathbf{K}$-algebra.
In this case, we have equivalences $\mathrm{per}(A) \simeq K^b(\mathrm{proj}\,A)$ and $D_\mathrm{fd}(A) \simeq D^b(\mathrm{mod}\,A)$ where $\mathrm{proj}\,A$ denotes the additive category of projective $A$-modules and $\mathrm{mod}\,A$ denotes the abelian category of $A$-modules.

A dg $\mathbf{K}$-algebra $A$ such that $H^*(A)$ is finite-dimensional is often said to be {\em proper}.
Note that, for a proper dg $\mathbf{K}$-algebra $A$, we have $D_\mathrm{fd}(A) \supset \mathrm{per}(A)$.
\end{exa}

It follows from Definition \ref{st-triple} (1) that $\mathcal{B}$ is Hom-finite.
This shows that $\mathcal{B}$ is Krull--Schmidt as every Hom-finite idempotent complete additive category is Krull--Schmidt (see \cite[Lemma 2.1]{HKP}).
Moreover the hearts of the t-structures in Definition \ref{st-triple} (2), (3) can be described as follows.

\begin{lem}[{\cite[Proposition 4.6]{AMY}}]
Let $(\mathcal{B},\mathcal{A},M)$ be an ST-triple inside $\mathcal{D}$.
Then we have $\mathcal{H}_{\bar{\mathfrak{a}}} = \mathcal{H}_\mathfrak{a}$.
Moreover the functor $\mathrm{Hom}(M,-) : \mathcal{H}_{\bar{\mathfrak{a}}} \to \mathrm{mod}(\mathrm{End}(M))$ is an equivalence.
\end{lem}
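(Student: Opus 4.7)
The plan is to split the statement into the heart equality and the equivalence, and to handle them separately.

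The equality $\mathcal{H}_{\bar{\mathfrak{a}}} = \mathcal{H}_\mathfrak{a}$ will follow from a direct comparison of definitions. Since $\mathcal{A}_M^{\leq 0} = \mathcal{A} \cap \mathcal{D}_M^{\leq 0}$ and $\mathcal{A}_M^{\geq 0} = \mathcal{A} \cap \mathcal{D}_M^{\geq 0}$, one has $\mathcal{H}_\mathfrak{a} = \mathcal{A} \cap \mathcal{H}_{\bar{\mathfrak{a}}}$. On the other hand, condition (3) of Definition \ref{st-triple} gives $\mathcal{H}_{\bar{\mathfrak{a}}} \subset \mathcal{D}_M^{\geq 0} \subset \mathcal{A}$, so $\mathcal{H}_{\bar{\mathfrak{a}}} \cap \mathcal{A} = \mathcal{H}_{\bar{\mathfrak{a}}}$ and the two hearts coincide.

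For the equivalence, I would use the algebraic structure on $\mathcal{D}$ to present $\mathrm{Hom}(M,-)$ as the heart-restriction of a t-exact functor landing in the derived category of a dg algebra. Fix a dg enhancement of $\mathcal{D}$, set $A \coloneqq \mathrm{RHom}(M,M)$, and consider $\Phi \coloneqq \mathrm{RHom}(M,-) : \mathcal{D} \to D(A)$. The silting hypothesis gives $H^{k}(A) = 0$ for $k > 0$, so $H^0(A) = \mathrm{End}(M)$, which is finite-dimensional by condition (1). By construction $H^k(\Phi(E)) = \mathrm{Hom}(M, \Sigma^k E)$, so $\Phi$ is t-exact from $\bar{\mathfrak{a}}$ to the standard t-structure on $D_{\mathrm{fd}}^-(A)$, and its restriction to hearts is precisely $\mathrm{Hom}(M,-) : \mathcal{H}_{\bar{\mathfrak{a}}} \to \mathrm{mod}(\mathrm{End}(M))$.

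To upgrade this heart functor to an equivalence I would verify essential surjectivity and fully faithfulness separately. For essential surjectivity, given a finite-dimensional $\mathrm{End}(M)$-module $V$, choose a finite projective presentation $P_1 \to P_0 \to V \to 0$; since the indecomposable projective $\mathrm{End}(M)$-modules are precisely the summands of $\mathrm{Hom}(M,M)$, this lifts to a morphism $f : M_1 \to M_0$ in $\mathrm{add}(M) \subset \mathcal{B}$, and the long exact sequence of $\mathrm{Hom}(M,-)$ applied to the triangle $M_1 \to M_0 \to \mathrm{cone}(f) \to \Sigma M_1$ shows that $E \coloneqq H_{\bar{\mathfrak{a}}}^0(\mathrm{cone}(f)) \in \mathcal{H}_{\bar{\mathfrak{a}}}$ satisfies $\mathrm{Hom}(M,E) \cong V$. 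For fully faithfulness, for $E, F \in \mathcal{H}_{\bar{\mathfrak{a}}}$ I would produce an $\mathrm{add}(M)$-presentation of $E$ as above and compare both sides using the tautology $\mathrm{Hom}(M', F) = \mathrm{Hom}_{\mathrm{End}(M)}(\mathrm{Hom}(M, M'), \mathrm{Hom}(M, F))$ valid for $M' \in \mathrm{add}(M)$ together with the long exact sequence. The main technical hurdle is that objects of $\mathcal{H}_{\bar{\mathfrak{a}}}$ need not lie in $\mathcal{B} = \mathrm{thick}(M)$, so one cannot simply invoke the derived equivalence $\mathcal{B} \simeq \mathrm{per}(A)$; the argument has to proceed indirectly through projective presentations of modules combined with the t-exactness of $\Phi$.
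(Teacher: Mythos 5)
The paper does not prove this statement at all; it simply cites \cite[Proposition 4.6]{AMY}, so there is no in-text proof to compare against. Your reconstruction has to be judged on its own merits.

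The heart equality argument is clean and correct: $\mathcal{H}_\mathfrak{a}=\mathcal{A}\cap\mathcal{H}_{\bar{\mathfrak{a}}}$ by unwinding the definitions, and $\mathcal{H}_{\bar{\mathfrak{a}}}\subset\mathcal{D}_M^{\geq 0}\subset\mathcal{A}$ by condition (3), so the intersection is all of $\mathcal{H}_{\bar{\mathfrak{a}}}$. The essential surjectivity step is also essentially right, though note it actually uses two long exact sequences rather than one: applying $\mathrm{Hom}(M,-)$ to the triangle on $\mathrm{cone}(f)$ yields $\mathrm{Hom}(M,\mathrm{cone}(f))\cong V$ and $\mathrm{cone}(f)\in\mathcal{D}_M^{\leq 0}$, but you then need the truncation triangle for $\bar{\mathfrak{a}}$ to conclude $\mathrm{Hom}(M,\mathrm{cone}(f))\cong\mathrm{Hom}(M,H^0_{\bar{\mathfrak{a}}}(\mathrm{cone}(f)))$, using that $\mathrm{Hom}(M,-)$ vanishes on $\mathcal{D}_M^{\leq -1}$ in degrees $0$ and $1$.

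The real gap is in the phrase \enquote{produce an $\mathrm{add}(M)$-presentation of $E$ as above} in the fully faithfulness step. The construction \enquote{above} starts from a module $V$ and produces \emph{some} object of $\mathcal{H}_{\bar{\mathfrak{a}}}$; what you need here is the converse, that an \emph{arbitrary} $E\in\mathcal{H}_{\bar{\mathfrak{a}}}$ arises as $H^0_{\bar{\mathfrak{a}}}(\mathrm{cone}(f))$ for some $f\colon M_1\to M_0$ in $\mathrm{add}(M)$. This requires a separate argument: lift the projective cover $P_0\twoheadrightarrow\mathrm{Hom}(M,E)$ to a morphism $g\colon M_0\to E$ using the identity $\mathrm{Hom}(M_0,E)\cong\mathrm{Hom}_{\mathrm{End}(M)}(\mathrm{Hom}(M,M_0),\mathrm{Hom}(M,E))$, check that the fibre $K$ of $g$ lies in $\mathcal{D}_M^{\leq 0}$, repeat once to get $M_1\to K$, and then invoke the octahedral axiom on $M_1\to K\to M_0$ to identify $E$ with $\tau^{\geq 0}$ of $\mathrm{cone}(M_1\to M_0)$. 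Only after this does the long exact sequence comparison of $\mathrm{Hom}(C,F)$ with $\mathrm{Hom}_{\mathrm{End}(M)}(\mathrm{Hom}(M,E),\mathrm{Hom}(M,F))$ go through. The approach is the standard one and will work once this step is filled in, but as written the fullness argument relies on a construction that has not actually been established in the required direction.
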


Since $\mathrm{End}(M)$ is a finite-dimensional $\mathbf{K}$-algebra, $\mathrm{mod}(\mathrm{End}(M))$ is a length category with finitely many isomorphism classes of simple objects.
Therefore the bounded t-structure $\mathfrak{a}$ in Definition \ref{st-triple} (3) has finite heart.

Let $(\mathcal{B},\mathcal{A},M)$ be an ST-triple inside $\mathcal{D}$.
Let $S_1,\dots,S_n \in \mathcal{H}_\mathfrak{a}$ be a complete set of pairwise non-isomorphic simple objects and $S \coloneqq S_1 \oplus \cdots \oplus S_n$.
For a full subcategory $\mathcal{C} \subset \mathcal{D}$, we define
\begin{gather*}
\mathcal{C}^M_{\geq 0} \coloneqq \{ E \in \mathcal{C} \,|\, \mathrm{Hom}(\Sigma^kE,S) = 0 \text{ for all } k < 0 \},\\
\mathcal{C}^M_{\leq 0} \coloneqq \{ E \in \mathcal{C} \,|\, \mathrm{Hom}(\Sigma^kE,S) = 0 \text{ for all } k > 0 \}.
\end{gather*}

The following proposition gives a dual description of Definition \ref{st-triple}.

\begin{prop}[{\cite[Proposition 4.17 and Remark 4.18]{AMY}}]\label{st-dual}
Let $(\mathcal{B},\mathcal{A},M)$ be an ST-triple inside $\mathcal{D}$.
Let $S_1,\dots,S_n \in \mathcal{H}_\mathfrak{a}$ be a complete set of pairwise non-isomorphic simple objects and $S \coloneqq S_1 \oplus \cdots \oplus S_n$.
Then the following hold:
\begin{enumerate}
\item $\mathrm{Hom}(E,S)$ is finite-dimensional for every $E \in \mathcal{D}$.
\item $\bar{\mathfrak{b}} \coloneqq (\mathcal{D}^M_{\geq 0},\mathcal{D}^M_{\leq 0})$ is a co-t-structure on $\mathcal{D}$.
\item $\mathcal{D}^M_{\geq 0} \subset \mathcal{B}$ and $\mathfrak{b} \coloneqq (\mathcal{B}^M_{\geq 0},\mathcal{B}^M_{\leq 0})$ is a bounded co-t-structure on $\mathcal{B}$.
\end{enumerate}
\end{prop}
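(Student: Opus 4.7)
The plan is to establish (1), (2), and (3) in sequence, using the Morita-style equivalence $\mathrm{Hom}(M,-) : \mathcal{H}_{\bar{\mathfrak{a}}} \to \mathrm{mod}(\mathrm{End}(M))$ from the preceding lemma, together with the interplay between the silting object $M$ and the sum of simples $S$, as the main tools.

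For (1), I would first reduce the computation of $\mathrm{Hom}(E,S)$ to the heart. Since $S \in \mathcal{H}_{\bar{\mathfrak{a}}}$, applying $\mathrm{Hom}(-,S)$ to the truncation triangles $\tau^{\leq -1}E \to \tau^{\leq 0}E \to H^0_{\bar{\mathfrak{a}}}(E) \to \Sigma\tau^{\leq -1}E$ and $\tau^{\leq 0}E \to E \to \tau^{\geq 1}E \to \Sigma\tau^{\leq 0}E$ yields $\mathrm{Hom}(E,S) \cong \mathrm{Hom}_{\mathcal{H}_{\bar{\mathfrak{a}}}}(H^0_{\bar{\mathfrak{a}}}(E),S)$, using the standard vanishings of the t-structure. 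Analogously, the silting property of $M$ (which places $M \in \mathcal{D}^{\leq 0}_M$) gives $\mathrm{Hom}(M,H^0_{\bar{\mathfrak{a}}}(E)) = \mathrm{Hom}(M,E)$. Transporting along the equivalence, $\mathrm{Hom}(E,S)$ then becomes $\mathrm{Hom}_{\mathrm{End}(M)}(\mathrm{Hom}(M,E),\mathrm{Hom}(M,S))$. Both arguments are finite-dimensional $\mathrm{End}(M)$-modules by Definition \ref{st-triple}(1) (and $\mathrm{End}(M)$ itself is finite-dimensional), so this hom space is finite-dimensional.

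For (2), the closure of $\mathcal{D}^M_{\geq 0}$ and $\mathcal{D}^M_{\leq 0}$ under the required shifts and under direct summands is immediate from the definitions. The main content is the hom-vanishing axiom: for $E \in \mathcal{D}^M_{\geq 1}$ and $F \in \mathcal{D}^M_{\leq 0}$, unwinding the definitions gives $\mathrm{Hom}(E,\Sigma^j S)=0$ for all $j \geq 0$ and $\mathrm{Hom}(F,\Sigma^j S)=0$ for all $j < 0$. I would take the $\bar{\mathfrak{a}}$-filtration of $F$ and refine its heart pieces into shifts of the simples $S_1,\ldots,S_n$ via composition series (as in the proof of Lemma \ref{t-lower}); the constraint on $F$ forces the relevant shifts in this refined filtration to lie in non-negative degrees, so the hypothesis on $E$ kills each piece, and propagating through the long exact sequences yields $\mathrm{Hom}(E,F)=0$. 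For the decomposition triangle $D \to E \to F \to \Sigma D$ with $D \in \mathcal{D}^M_{\geq 1}$ and $F \in \mathcal{D}^M_{\leq 0}$, I would cut the simple-refined t-filtration of $E$ at the boundary between positive and non-positive shifts, taking $D$ to be the positive part and $F$ the non-positive part, and assemble them via the octahedral axiom.

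For (3), one first checks $M \in \mathcal{C}_{\bar{\mathfrak{b}}} = \mathcal{D}^M_{\geq 0} \cap \mathcal{D}^M_{\leq 0}$: both conditions translate into $\mathrm{Hom}(M,\Sigma^j S)=0$ for $j \neq 0$, which follow from $S \in \mathcal{H}_{\bar{\mathfrak{a}}}$. The inclusion $\mathcal{D}^M_{\geq 0} \subset \mathcal{B}$ would be proved by realizing the decomposition triangles from (2) inside $\mathcal{B}$ for objects in $\mathcal{D}^M_{\geq 0}$, using that $M$ split-generates $\mathcal{B}$. The co-t-structure $\bar{\mathfrak{b}}$ then restricts to a co-t-structure $\mathfrak{b}$ on $\mathcal{B}$; boundedness follows from the finite silting filtrations inside $\mathrm{thick}(M)$, and finiteness of the co-heart $\mathcal{C}_\mathfrak{b} = \mathrm{add}(M)$ follows from the Krull--Schmidt property of $\mathcal{B}$ noted earlier. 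The principal technical obstacle is the shift-control step in (2): one must leverage the $\mathrm{Ext}$-structure of the simples inside $\mathcal{H}_{\bar{\mathfrak{a}}} \simeq \mathrm{mod}(\mathrm{End}(M))$ and its compatibility with the silting object $M$ to rule out negative shifts in the simple refinement of an object of $\mathcal{D}^M_{\leq 0}$.
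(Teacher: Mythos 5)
The paper does not prove this proposition at all; it is stated as a cited result, taken verbatim from \cite[Proposition 4.17 and Remark 4.18]{AMY}. So there is no proof in the paper to compare against, and your write-up is an attempt to reconstruct an argument from scratch. That said, two of the steps you outline contain genuine errors rather than merely unfilled routine gaps.

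For (1), the claimed isomorphism $\mathrm{Hom}(E,S)\cong\mathrm{Hom}_{\mathcal{H}_{\bar{\mathfrak{a}}}}(H^0_{\bar{\mathfrak{a}}}(E),S)$ is false. Applying $\mathrm{Hom}(-,S)$ to the triangle $\tau^{\leq 0}E\to E\to\tau^{\geq 1}E\to\Sigma\tau^{\leq 0}E$ requires the vanishing of $\mathrm{Hom}(\tau^{\geq 1}E,S)$ and $\mathrm{Hom}(\tau^{\geq 1}E,\Sigma S)$, but the t-structure axiom only kills morphisms from $\mathcal{D}^{\leq 0}$ to $\mathcal{D}^{\geq 1}$, not the reverse; e.g.\ for $E=S[-1]$ one has $\mathrm{Hom}(E,S)=\mathrm{Ext}^1(S,S)$, which is typically nonzero, while $H^0_{\bar{\mathfrak{a}}}(E)=0$. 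Positive cohomology degrees of $E$ contribute via higher $\mathrm{Ext}$'s, so the reduction to the heart does not go through in the stated form.

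For (2) and (3), your key technical device is to take a finite t-filtration of an arbitrary object of $\mathcal{D}$ and refine it into simples, then cut at the degree-zero boundary. But the t-structure $\bar{\mathfrak{a}}$ on the ambient $\mathcal{D}$ is only required to be a t-structure, not a bounded one --- only $\mathfrak{a}$ on $\mathcal{A}$ is bounded (Definition \ref{st-triple}(3)). In the model Example \ref{st-ex}(1), $\mathcal{D}=D^-_{\mathrm{fd}}(A)$ and objects have cohomology unbounded below, so no finite t-filtration exists in general. The construction of the approximation triangle for $\bar{\mathfrak{b}}$ therefore cannot be reduced to a finite truncation-and-glue of a t-filtration; this is exactly the hard content that \cite{AMY} has to supply (they build the co-t-decomposition using silting-theoretic approximations by $\mathrm{add}(M)$ and a bootstrapping argument, not by refining t-filtrations into simples). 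Your sketch for $\mathcal{D}^M_{\geq 0}\subset\mathcal{B}$ is also circular as written: it presupposes that the decomposition triangle can be taken inside $\mathcal{B}$, which is precisely what needs to be shown. You do flag the difficulty at the end (``the principal technical obstacle is the shift-control step''), and that instinct is correct --- that is where the argument as sketched actually breaks down.
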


Similarly as above, we can deduce from Proposition \ref{st-dual} (1) that $\mathcal{A}$ is Hom-finite and hence Krull--Schmidt.
The co-hearts of the co-t-structures in Proposition \ref{st-dual} (2), (3) can be described as follows.

\begin{lem}[{\cite[Remark 4.18]{AMY}}]
Let $(\mathcal{B},\mathcal{A},M)$ be an ST-triple inside $\mathcal{D}$.
Then we have $\mathcal{C}_{\bar{\mathfrak{b}}} = \mathcal{C}_\mathfrak{b} = \mathrm{add}(M)$.
\end{lem}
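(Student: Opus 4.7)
The plan is to establish the two equalities in turn, starting with the easier one. First I would verify $\mathcal{C}_{\bar{\mathfrak{b}}} = \mathcal{C}_\mathfrak{b}$ directly from the definitions: by construction $\mathcal{B}^M_{\geq 0} = \mathcal{D}^M_{\geq 0} \cap \mathcal{B}$ and $\mathcal{B}^M_{\leq 0} = \mathcal{D}^M_{\leq 0} \cap \mathcal{B}$, and Proposition \ref{st-dual}(3) tells us $\mathcal{D}^M_{\geq 0} \subset \mathcal{B}$, so the intersection $\mathcal{B}^M_{\geq 0} \cap \mathcal{B}^M_{\leq 0}$ collapses to $\mathcal{D}^M_{\geq 0} \cap \mathcal{D}^M_{\leq 0}$, which is exactly $\mathcal{C}_{\bar{\mathfrak{b}}}$. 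This is pure book-keeping and should take no more than a line.

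Next I would show $\mathrm{add}(M) \subset \mathcal{C}_\mathfrak{b}$. Since $\mathcal{C}_\mathfrak{b}$ is the co-heart of a co-t-structure in an idempotent complete category, it is additive and closed under direct summands, so it suffices to check $M \in \mathcal{C}_\mathfrak{b}$. Membership $M \in \mathcal{B}$ is part of the definition of an ST-triple, and the Hom-vanishing $\mathrm{Hom}(\Sigma^k M, S) \cong \mathrm{Hom}(M, \Sigma^{-k}S) = 0$ for all $k \neq 0$ is immediate from $S \in \mathcal{H}_{\bar{\mathfrak{a}}} = \mathcal{D}^{\leq 0}_M \cap \mathcal{D}^{\geq 0}_M$: for $k>0$ from $S \in \mathcal{D}^{\leq 0}_M$, and for $k<0$ from $S \in \mathcal{D}^{\geq 0}_M$.

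The main obstacle is the reverse inclusion $\mathcal{C}_\mathfrak{b} \subset \mathrm{add}(M)$. My preferred approach is to invoke the bijection between bounded co-t-structures with idempotent complete co-heart on $\mathcal{B}$ and silting subcategories of $\mathcal{B}$ from \cite[Theorem 2.9]{IY}, already referenced in the Preliminaries. Both sides of the desired equality are silting subcategories of $\mathcal{B}$ — $\mathrm{add}(M)$ because $M$ is a silting object and $\mathcal{B} = \mathrm{thick}(M)$, and $\mathcal{C}_\mathfrak{b}$ by virtue of this very bijection — and the inclusion $\mathrm{add}(M) \subset \mathcal{C}_\mathfrak{b}$ established above, combined with the injectivity of the IY correspondence, should force equality: the co-t-structure determined by $\mathrm{add}(M)$ must have $\mathcal{B}_{\geq 0}$ (resp. $\mathcal{B}_{\leq 0}$) generated under extensions and summands by $\{\Sigma^{-k}M\}_{k \geq 0}$ (resp. $\{\Sigma^{k}M\}_{k \geq 0}$), and these already sit inside $\mathcal{B}^M_{\geq 0}$ (resp. $\mathcal{B}^M_{\leq 0}$) by the Hom-vanishing we have verified, so the two co-t-structures must agree. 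A more hands-on alternative, suited to the algebraic setting of Section 5, would be to identify $\mathcal{B} \simeq K^b(\mathrm{add}(M))$ and use the Koszul-type duality $\mathrm{Hom}(M_i, S_j) \cong \delta_{ij}\mathbf{K}$ (a consequence of the heart equivalence $\mathrm{Hom}(M,-) : \mathcal{H}_{\bar{\mathfrak{a}}} \xrightarrow{\sim} \mathrm{mod}(\mathrm{End}(M))$) to show that the Hom-vanishing condition forces the minimal representative of any $E \in \mathcal{C}_\mathfrak{b}$ to be concentrated in cohomological degree $0$, hence in $\mathrm{add}(M)$.
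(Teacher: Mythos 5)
The paper does not prove this statement; it simply cites \cite[Remark 4.18]{AMY}, so there is no internal argument to compare against. On its own merits your Iyama--Yang route is correct. Two small points. In the second step the roles of $\mathcal{D}_M^{\leq 0}$ and $\mathcal{D}_M^{\geq 0}$ are inadvertently swapped: from $\mathrm{Hom}(\Sigma^kM,S) \cong \mathrm{Hom}(M,\Sigma^{-k}S)$, the case $k>0$ vanishes because $S \in \mathcal{D}_M^{\geq 0}$ (since $-k<0$) and the case $k<0$ because $S \in \mathcal{D}_M^{\leq 0}$; the conclusion $\mathrm{add}(M)\subset\mathcal{C}_\mathfrak{b}$ still stands. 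In the third step, spell out the rigidity rather than appealing to ``injectivity'': if $\mathfrak{b}'$ and $\mathfrak{b}$ are co-t-structures on $\mathcal{B}$ with $\mathcal{B}'_{\geq 0} \subseteq \mathcal{B}_{\geq 0}$ and $\mathcal{B}'_{\leq 0} \subseteq \mathcal{B}_{\leq 0}$, then $\mathcal{B}_{\leq 0} = (\mathcal{B}_{\geq 1})^\perp \subseteq (\mathcal{B}'_{\geq 1})^\perp = \mathcal{B}'_{\leq 0}$ forces equality of the coaisles, and dually of the aisles, hence of the co-hearts. You apply this with $\mathfrak{b}'$ the Iyama--Yang co-t-structure attached to the silting subcategory $\mathrm{add}(M)$, whose aisles are the extension- and summand-closures of $\{\Sigma^{-k}M\}_{k\geq 0}$ and $\{\Sigma^kM\}_{k\geq 0}$; these land in $\mathcal{B}^M_{\geq 0}$ and $\mathcal{B}^M_{\leq 0}$ because the latter contain the relevant shifts of $M$ (by Step 2) and are closed under extensions and direct summands.

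The ``hands-on alternative'' you sketch at the end is not viable as written. The identification $\mathcal{B} \simeq K^b(\mathrm{add}(M))$ requires $M$ to be a tilting object, not merely a silting one: for silting $M$ one has $\mathcal{B} \simeq \mathrm{per}(A)$ for a dg algebra $A$ with $H^k(A)=0$ for $k>0$ but possibly $H^k(A)\neq 0$ for some $k<0$, and in that case $\mathcal{B}$ is not the bounded homotopy category of an additive category, so the ``minimal representative concentrated in degree zero'' argument has no meaning. Stick with the bijection route.
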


It then immediately follows that the bounded co-t-structure $\mathfrak{b}$ in Proposition \ref{st-dual} (3) has finite co-heart.

To sum up, for an ST-triple $(\mathcal{B},\mathcal{A},M)$ inside $\mathcal{D}$, we have both a bounded t-structure $\mathfrak{a}$ on $\mathcal{A}$ with finite heart and a bounded co-t-structure $\mathfrak{b}$ on $\mathcal{B}$ with finite co-heart.
Therefore if we have an exact endofunctor $\Phi : \mathcal{D} \to \mathcal{D}$ which restricts to $\mathcal{A}$ and $\mathcal{B}$ then we can apply Theorems \ref{t-thm-1} and \ref{t-thm-2} to $\Phi|_\mathcal{A} : \mathcal{A} \to \mathcal{A}$ and Theorems \ref{c-thm-1} and \ref{c-thm-2} to $\Phi|_\mathcal{B} : \mathcal{B} \to \mathcal{B}$.

Now let $\mathcal{A}$ be a Krull--Schmidt category.
Consider an object $E \in \mathcal{A}$ which admits a direct sum decomposition $E \cong E_1 \oplus \cdots \oplus E_m$ into indecomposable objects $E_1,\dots,E_m$.
Then $E$ is called {\em basic} if $E_i \not\cong E_j$ for every $i \neq j$.

The following is the main theorem of this section.

\begin{thm}\label{app-thm}
Let $(\mathcal{B},\mathcal{A},M)$ be an ST-triple inside $\mathcal{D}$ such that $M$ is basic and $\Phi : \mathcal{D} \to \mathcal{D}$ be an exact endofunctor.
Assume that $\Phi(\mathcal{A}) \subset \mathcal{A}$ and $\Phi(\mathcal{B}) \subset \mathcal{B}$.
Then we have
\begin{gather*}
h_t^\mathcal{A}(\Phi|_\mathcal{A}) = \lim_{N \to \infty} \frac{1}{N} \log \sum_{k \in \mathbf{Z}} \dim \mathrm{Hom}^{-k}(M,\Phi^N(G_\mathcal{A})) e^{kt},\\
h_t^\mathcal{B}(\Phi|_\mathcal{B}) = \lim_{N \to \infty} \frac{1}{N} \log \sum_{k \in \mathbf{Z}} \dim \mathrm{Hom}^{-k}(\Phi^N(G_\mathcal{B}),S) e^{-kt}
\end{gather*}
for any split-generators $G_\mathcal{A} \in \mathcal{A}$ and $G_\mathcal{B} \in \mathcal{B}$.
In particular, if $\Phi$ is an exact autoequivalence then
\begin{equation*}
h_t^\mathcal{A}(\Phi|_\mathcal{A}) = h_{-t}^\mathcal{B}(\Phi|_\mathcal{B}^{-1}) = \lim_{N \to \infty} \frac{1}{N} \log \sum_{k \in \mathbf{Z}} \dim \mathrm{Hom}^{-k}(M,\Phi^N(S)) e^{kt}.
\end{equation*}
\end{thm}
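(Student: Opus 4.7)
The plan is to apply Theorems \ref{t-thm-1} and \ref{t-thm-2} (1) to $(\mathcal{A}, \Phi|_\mathcal{A})$ equipped with the bounded t-structure $\mathfrak{a}$ and the exact functor $\Gamma_\mathcal{A} \coloneqq \mathrm{RHom}(M,-) : \mathcal{A} \to D^b(\mathbf{K})$, and to apply Theorems \ref{c-thm-1} and \ref{c-thm-2} (2) in its opposite (``resp.'') formulation to $(\mathcal{B}, \Phi|_\mathcal{B})$ equipped with the bounded co-t-structure $\mathfrak{b}$ and the exact functor $\Gamma_\mathcal{B} \coloneqq \mathrm{RHom}(-,S) : \mathcal{B}^\mathrm{op} \to D^b(\mathbf{K})$. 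These $\mathrm{RHom}$'s land in $D^b(\mathbf{K})$ because $\mathcal{D}$ is algebraic and thanks to the Hom-finiteness built into Definition \ref{st-triple} (1) and Proposition \ref{st-dual} (1). The heart of the proof is therefore the verification that $\Gamma_\mathcal{A}$ and $\Gamma_\mathcal{B}$ satisfy the hypotheses imposed in Theorems \ref{t-thm-2} and \ref{c-thm-2}.

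For $\Gamma_\mathcal{A}$, I need $\Gamma_\mathcal{A}(E) \cong \mathbf{K}$ for every simple $E \in \mathcal{H}_\mathfrak{a}$. Higher cohomology vanishes automatically, since $\mathcal{H}_\mathfrak{a} = \mathcal{A}_M^{\leq 0} \cap \mathcal{A}_M^{\geq 0}$ kills $\mathrm{Hom}^k(M,E)$ for $k \neq 0$. Under the equivalence $\mathrm{Hom}(M,-) : \mathcal{H}_\mathfrak{a} \to \mathrm{mod}(\mathrm{End}(M))$ simples go to simples; since $M$ is basic and $\mathbf{K}$ is algebraically closed, $\mathrm{End}(M)$ is a basic finite-dimensional $\mathbf{K}$-algebra, and so every simple $\mathrm{End}(M)$-module is one-dimensional over $\mathbf{K}$. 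For $\Gamma_\mathcal{B}$, the indecomposables of $\mathcal{C}_\mathfrak{b} = \mathrm{add}(M)$ are, up to isomorphism, the summands $M_1,\dots,M_n$ of $M$. Labelling the simples so that $S_i$ is the top of the indecomposable projective $\mathrm{Hom}(M,M_i)$, one gets $\mathrm{Hom}^k(M_i,S_j) = \delta_{k,0}\delta_{i,j}\mathbf{K}$, giving $\Gamma_\mathcal{B}(M_i) \cong \mathbf{K}$. For the isomorphism-detection hypothesis, a morphism $f : M_i \to M_j$ with $\Gamma_\mathcal{B}(f) \neq 0$ must have $i = j$ since otherwise all relevant components vanish; and for $f \in \mathrm{End}(M_i)$, the local ring $\mathrm{End}(M_i)$ acts on the 1-dimensional simple right module $\mathrm{Hom}(M_i,S_i)$ through its residue field $\mathbf{K}$, so $\Gamma_\mathcal{B}(f) \neq 0$ forces $f \notin \mathrm{rad}(\mathrm{End}(M_i))$ and hence makes $f$ a unit.

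Combining these verifications with the cited theorems produces the two displayed limit formulas for $h_t^\mathcal{A}(\Phi|_\mathcal{A})$ and $h_t^\mathcal{B}(\Phi|_\mathcal{B})$. For the autoequivalence assertion, I will take $G_\mathcal{A} = S$ in the first formula (since $S$ split-generates $\mathcal{A}$) to obtain the claimed expression for $h_t^\mathcal{A}(\Phi|_\mathcal{A})$, and take $G_\mathcal{B} = M$ in the second formula applied to $\Phi|_\mathcal{B}^{-1}$ with $t$ replaced by $-t$; the autoequivalence adjunction
\[
\mathrm{Hom}(\Phi^{-N}(M), \Sigma^{-k}S) \cong \mathrm{Hom}(M, \Sigma^{-k}\Phi^N(S))
\]
then identifies the two sums term-by-term. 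The main subtle point will be the isomorphism-detection step for $\Gamma_\mathcal{B}$, in which both the basicness of $M$ and the algebraic closedness of $\mathbf{K}$ are essential (cf.\ Remark \ref{alg-cl}); once this is in place the rest is a formal invocation of the general machinery already developed.
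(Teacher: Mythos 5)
Your proposal is correct and follows essentially the same route as the paper: both apply Theorems \ref{t-thm-1}/\ref{t-thm-2} with $\Gamma_\mathcal{A}=\mathrm{RHom}(M,-)$ and Theorems \ref{c-thm-1}/\ref{c-thm-2} (opposite form) with $\Gamma_\mathcal{B}=\mathrm{RHom}(-,S)$, verify the hypotheses using the projective/simple pairing $\mathrm{Hom}^k(M_i,S_j)\cong\delta_{k,0}\delta_{i,j}\mathbf{K}$, and conclude the autoequivalence statement by specializing $G_\mathcal{A}=S$, $G_\mathcal{B}=M$. The only cosmetic difference is that you re-derive this pairing from the module-category equivalence and basicness of $\mathrm{End}(M)$ rather than citing Proposition \ref{amy-prop} directly, which is an equally valid justification.
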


To prove Theorem \ref{app-thm}, we need the following.

\begin{prop}[{\cite[Corollary 4.8]{AMY}}]\label{amy-prop}
Let $(\mathcal{B},\mathcal{A},M)$ be an ST-triple such that $M$ is basic.
Let $M_1,\dots,M_n \in \mathcal{B}$ be indecomposable objects such that $M \cong M_1 \oplus \cdots \oplus M_n$ and $S_i \in \mathcal{H}_{\bar{\mathfrak{a}}} = \mathcal{H}_\mathfrak{a}$ be the simple top of $H_{\bar{\mathfrak{a}}}^0(M_i)$.
Then we have
\begin{equation*}
\mathrm{Hom}(M_i,\Sigma^kS_j) \cong
\begin{cases}
\mathbf{K} & (i=j \text{ and } k=0),\\
0 & (\text{otherwise}).
\end{cases}
\end{equation*}
\end{prop}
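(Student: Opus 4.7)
The plan is to apply the two flavors of the main theorems to the exact test functors $\Gamma_\mathcal{A} \coloneqq \mathrm{RHom}(M,-) : \mathcal{A} \to D^b(\mathbf{K})$ and $\Gamma_\mathcal{B} \coloneqq \mathrm{RHom}(-,S) : \mathcal{B}^\mathrm{op} \to D^b(\mathbf{K})$. Theorem \ref{t-thm-1} identifies $h_t^\mathcal{A}(\Phi|_\mathcal{A})$ with $\hat{h}_t^\mathfrak{a}(\Phi|_\mathcal{A})$, and Theorem \ref{c-thm-1} identifies $h_t^\mathcal{B}(\Phi|_\mathcal{B})$ with $\check{h}_t^\mathfrak{b}(\Phi|_\mathcal{B})$, so once the hypotheses of Theorems \ref{t-thm-2} (1) and \ref{c-thm-2} (2) are verified for $\Gamma_\mathcal{A}$ and $\Gamma_\mathcal{B}$ respectively, both dimension-growth formulas follow immediately from the identifications $H^{-k}(\Gamma_\mathcal{A}(E)) = \mathrm{Hom}^{-k}(M,E)$ and $H^{-k}(\Gamma_\mathcal{B}(E)) = \mathrm{Hom}^{-k}(E,S)$.

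For the $\mathcal{A}$-side, the simple objects of $\mathcal{H}_\mathfrak{a}$ are $S_1,\dots,S_n$; since $M$ is basic, $M \cong M_1 \oplus \cdots \oplus M_n$ with pairwise non-isomorphic indecomposables, and Proposition \ref{amy-prop} gives $\mathrm{RHom}(M,S_j) \cong \bigoplus_i \mathrm{RHom}(M_i,S_j) \cong \mathbf{K}$ concentrated in degree $0$, which is exactly the hypothesis required by Theorem \ref{t-thm-2} (1). For the $\mathcal{B}$-side, the indecomposable objects of $\mathcal{C}_\mathfrak{b} = \mathrm{add}(M)$ are (up to isomorphism) $M_1,\dots,M_n$, and Proposition \ref{amy-prop} likewise gives $\mathrm{RHom}(M_i,S) \cong \mathbf{K}$ in degree $0$. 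To verify the second hypothesis of Theorem \ref{c-thm-2} (2), consider a morphism $f : M_i \to M_j$. If $i \neq j$ then the induced map $\Gamma_\mathcal{B}(f) : \mathrm{Hom}(M_j,S_j) \to \mathrm{Hom}(M_i,S_j)$ has target $0$ by Proposition \ref{amy-prop}, so $\Gamma_\mathcal{B}(f) \neq 0$ forces $i = j$; in that case, the $\mathbf{K}$-algebra homomorphism $\mathrm{End}(M_i) \to \mathrm{End}_\mathbf{K}(\mathrm{Hom}(M_i,S_i)) \cong \mathbf{K}$ sending $f \mapsto (s \mapsto s \circ f)$ is surjective, and since $\mathbf{K}$ is algebraically closed the local Krull--Schmidt endomorphism algebra $\mathrm{End}(M_i)$ has residue field $\mathbf{K}$ (the point flagged by Remark \ref{alg-cl}), so the kernel of that map is exactly $\mathrm{rad}(\mathrm{End}(M_i))$; thus $\Gamma_\mathcal{B}(f) \neq 0$ is equivalent to $f \notin \mathrm{rad}(\mathrm{End}(M_i))$, i.e.\ to $f$ being an isomorphism.

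For the ``in particular'' clause, assume $\Phi$ is an exact autoequivalence, so that $\Phi^{-1}$ also preserves $\mathcal{A}$ and $\mathcal{B}$. Apply the general $\mathcal{A}$-formula with the split-generator $G_\mathcal{A} = S$ (bounded-ness of $\mathfrak{a}$ together with finiteness of $\mathcal{H}_\mathfrak{a}$ gives $\mathcal{A} = \mathrm{thick}(S)$ by iterating t-filtrations and composition series), and apply the general $\mathcal{B}$-formula to $\Phi|_\mathcal{B}^{-1}$ at parameter $-t$ with $G_\mathcal{B} = M$: this yields $h_{-t}^\mathcal{B}(\Phi|_\mathcal{B}^{-1}) = \lim_N \frac{1}{N} \log \sum_k \dim \mathrm{Hom}^{-k}(\Phi^{-N}(M),S) e^{kt}$, and the autoequivalence identity $\mathrm{Hom}^{-k}(\Phi^{-N}(M),S) \cong \mathrm{Hom}^{-k}(M,\Phi^N(S))$ matches this with the $\mathcal{A}$-formula evaluated at $G_\mathcal{A} = S$. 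The main obstacle is the second half of the $\mathcal{B}$-side verification: the statement that $\Gamma_\mathcal{B}$ detects isomorphisms between indecomposables of $\mathrm{add}(M)$ is genuinely subtler than its $\mathcal{A}$-side counterpart and is where both the basic-ness of $M$ (so that each isomorphism class has a single index $i$) and the algebraic closedness of $\mathbf{K}$ (so that $\mathrm{End}(M_i)/\mathrm{rad}$ is literally $\mathbf{K}$, not a nontrivial division algebra over $\mathbf{K}$) are used together.
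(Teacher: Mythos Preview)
Your proposal does not prove the stated proposition at all. The statement in question is Proposition~\ref{amy-prop}, which asserts the orthogonality relation $\mathrm{Hom}(M_i,\Sigma^k S_j) \cong \mathbf{K}$ for $i=j$, $k=0$ and $0$ otherwise. What you have written is instead a proof of Theorem~\ref{app-thm}, the main application theorem, which \emph{uses} Proposition~\ref{amy-prop} as an input. Indeed, you invoke Proposition~\ref{amy-prop} explicitly at several points (to check $\Gamma_\mathcal{A}(S_j) \cong \mathbf{K}$, to check $\Gamma_\mathcal{B}(M_i) \cong \mathbf{K}$, and to show $\Gamma_\mathcal{B}(f)=0$ when $i\neq j$), so your argument is circular as a proof of the proposition itself.

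In the paper, Proposition~\ref{amy-prop} carries no proof: it is quoted directly from \cite[Corollary~4.8]{AMY}, with Remark~\ref{alg-cl} noting that the general result has $\mathrm{End}(S_i)$ in place of $\mathbf{K}$, and the algebraically closed hypothesis on $\mathbf{K}$ forces $\mathrm{End}(S_i)\cong\mathbf{K}$. A genuine proof would need to establish, from the ST-triple axioms, that $\mathrm{Hom}(M,-):\mathcal{H}_{\bar{\mathfrak a}}\to\mathrm{mod}(\mathrm{End}(M))$ is an equivalence and then read off the pairing between indecomposable projectives and simple tops in $\mathrm{mod}(\mathrm{End}(M))$; none of that appears in your proposal. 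Your argument is, however, essentially the paper's own proof of Theorem~\ref{app-thm}, so the content is correct---it is simply attached to the wrong statement.
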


\begin{rmk}\label{alg-cl}
In general, we only have
\begin{equation*}
\mathrm{Hom}(M_i,\Sigma^kS_j) \cong
\begin{cases}
\mathrm{End}(S_i) & (i=j \text{ and } k=0),\\
0 & (\text{otherwise}).
\end{cases}
\end{equation*}
However, since we are assuming that $\mathbf{K}$ is algebraically closed, we see that $\mathrm{End}(S_i) \cong \mathbf{K}$.
\end{rmk}

\begin{proof}[Proof of Theorem \ref{app-thm}]
Let $M_1,\dots,M_n \in \mathcal{B}$ be indecomposable objects such that $M \cong M_1 \oplus \cdots \oplus M_n$ and $S_i \in \mathcal{H}_{\bar{\mathfrak{a}}} = \mathcal{H}_\mathfrak{a}$ be the simple top of $H_{\bar{\mathfrak{a}}}^0(M_i)$.
Note that $S_1,\dots,S_n$ form a complete set of pairwise non-isomorphic simple objects of $\mathcal{H}_\mathfrak{a}$.
Let $S \coloneqq S_1 \oplus \cdots \oplus S_n$.

(Step 1)
By Proposition \ref{amy-prop}, the exact functor $\Gamma_\mathcal{A} \coloneqq \mathrm{RHom}(M,-) : \mathcal{A} \to D^b(\mathbf{K})$ satisfies $\Gamma_\mathcal{A}(E) \cong \mathbf{K}$ for every simple object $E \in \mathcal{H}_\mathfrak{a}$.
Thus we get
\begin{align*}
h_t^\mathcal{A}(\Phi|_\mathcal{A})
&= \hat{h}_t^{\mathcal{A},\mathfrak{a}}(\Phi|_\mathcal{A})\\
&= \lim_{N \to \infty} \frac{1}{N} \log \sum_{k \in \mathbf{Z}} \dim H^{-k}(\Gamma_\mathcal{A}(\Phi^N(G_\mathcal{A}))) e^{kt}\\
&= \lim_{N \to \infty} \frac{1}{N} \log \sum_{k \in \mathbf{Z}} \dim \mathrm{Hom}^{-k}(M,\Phi^N(G_\mathcal{A})) e^{kt}
\end{align*}
for any split-generator $G_\mathcal{A}$ of $\mathcal{A}$ by Theorems \ref{t-thm-1} and \ref{t-thm-2}.

(Step 2)
Let us check that the exact functor $\Gamma_\mathcal{B} \coloneqq \mathrm{RHom}(-,S) : \mathcal{B} \to D^b(\mathbf{K})^\mathrm{op}$ satisfies the conditions in Theorem \ref{c-thm-2}.
First, by Proposition \ref{amy-prop}, we see that $\Gamma_\mathcal{B}(E) \cong \mathbf{K}$ for every indecomposable object $E \in \mathcal{C}_\mathfrak{b}$.
Consider a morphism $f \in \mathrm{Hom}(M_i,M_j)$ for some $1 \leq i,j \leq n$.
If $i \neq j$ then we have $\Gamma_\mathcal{B}(f)=0$ by Proposition \ref{amy-prop}.
Suppose $i=j$.
As $M_i$ is indecomposable, $f$ is either an isomorphism or nilpotent.
Therefore if $f$ is not an isomorphism then it is nilpotent and so $\Gamma_\mathcal{B}(f)$ is also nilpotent.
Then, since $\Gamma_\mathcal{B}(f) \in \mathrm{Hom}(\mathbf{K},\mathbf{K})$, we must have $\Gamma_\mathcal{B}(f)=0$.
This shows that if $\Gamma_\mathcal{B}(f) \neq 0$ then $f$ is an isomorphism.

Now we can apply Theorems \ref{c-thm-1} and \ref{c-thm-2} to get
\begin{align*}
h_t^\mathcal{B}(\Phi|_\mathcal{B})
&= \check{h}_t^{\mathcal{B},\mathfrak{b}}(\Phi|_\mathcal{B})\\
&= \lim_{N \to \infty} \frac{1}{N} \log \sum_{k \in \mathbf{Z}} \dim H^{-k}(\Gamma_\mathcal{B}(\Phi^N(G_\mathcal{B}))) e^{-kt}\\
&= \lim_{N \to \infty} \frac{1}{N} \log \sum_{k \in \mathbf{Z}} \dim \mathrm{Hom}^{-k}(\Phi^N(G_\mathcal{B}),S) e^{-kt}
\end{align*}
for any split-generator $G_\mathcal{B}$ of $\mathcal{B}$.

(Step 3)
If $\Phi$ is an exact autoequivalence then $h_t^\mathcal{A}(\Phi|_\mathcal{A}) = h_{-t}^\mathcal{B}(\Phi|_\mathcal{B}^{-1})$ follows immediately from the former part of Theorem \ref{app-thm} applied to $G_\mathcal{A}=S$ and $G_\mathcal{B}=M$.
\end{proof}

\subsection{ST-triples coming from Fukaya categories}

In this section, we will see some examples of ST-triples coming from symplectic geometry and how Theorem \ref{app-thm} can be interpreted in terms of symplectic geometry.

A {\em Liouville domain} $(X,\theta)$ consists of a compact manifold $X$ with boundary and a one-form $\theta$ on $X$ such that $\omega \coloneqq d\theta$ is symplectic and the vector field $Z$ on $X$ determined by $i_Z\omega=\theta$ points strictly outwards along the boundary $\partial X$.
A submanifold $L$ of a Liouville domain $(X,\theta)$ is called Lagrangian if $\dim L = \frac{1}{2} \dim X$ and $\omega|_L=0$.
A Lagrangian submanifold $L$ is called {\em exact} if $\theta|_L$ is exact.

For a Liouville domain $(X,\theta)$, one can define an $A_\infty$-category $\mathcal{W}(X)$ called the {\em wrapped Fukaya category} of $(X,\theta)$ (see \cite{AS}).
An object of $\mathcal{W}(X)$ is an exact Lagrangian submanifold (with or without boundary) of $(X,\theta)$ satisfying some technical conditions.
For $ L,K \in \mathcal{W}(X)$ intersecting transversely, $\mathrm{Hom}_{\mathcal{W}(X)}(L,K)$ is defined to be the $\mathbf{Z}$-graded $\mathbf{C}$-vector space freely generated by the points of $L \cap K$ equipped with an $A_\infty$-structure given by counting some pseudo-holomorphic disks.
Let us denote by $\mathcal{F}(X)$ the full $A_\infty$-subcategory of $\mathcal{W}(X)$ consisting of exact Lagrangian submanifolds without boundary.
We also denote by $D^\pi\mathcal{W}(X),D^\pi\mathcal{F}(X)$ the idempotent complete derived categories of $\mathcal{W}(X),\mathcal{F}(X)$ respectively.
For more details on Fukaya categories, the reader may refer to \cite{Sei}.

In many cases, the derived wrapped Fukaya category $D^\pi\mathcal{W}(X)$ contains a silting object $L$ such that $\mathrm{End}_{D^\pi\mathcal{W}(X)}(L)$ is finite-dimensional.
If this is the case then there exist a dg $\mathbf{C}$-algebra $A$ and an exact equivalence $\varepsilon : D^\pi\mathcal{W}(X) \to \mathrm{per}(A)$ which takes $L$ to $A$ since $D^\pi\mathcal{W}(X)$ is an algebraic triangulated category by construction (see \cite[Theorem 4.3]{Kel1}).
We expect that, in this case, the exact equivalence $\varepsilon$ takes $D^\pi\mathcal{F}(X)$ to $D_\mathrm{fd}(A)$ and so $(D^\pi\mathcal{W}(X),D^\pi\mathcal{F}(X),L)$ is an ST-triple (compare Example \ref{st-ex} (2)).

Let us give a motivational example.
Let $(X,\theta)$ be a Liouville domain obtained by plumbing unit cotangent bundles $D^*S^2$ of the two-sphere according to the $A_n$ or $D_n$ tree.
Then the derived wrapped Fukaya category $D^\pi\mathcal{W}(X)$ contains a silting object $L$ which can be written as the direct sum of the fibers of $D^*S^2$'s.
Moreover it is known that the $A_\infty$-algebra $\mathrm{End}_{\mathcal{W}(X)}(L)$ is quasi-isomorphic to the 2-Calabi--Yau Ginzburg dg algebra $A$ associated to the same tree (see \cite[Theorem 1]{EL}).
Also it can be shown that the exact equivalence $\varepsilon : D^\pi\mathcal{W}(X) \to \mathrm{per}(A)$ takes $D^\pi\mathcal{F}(X)$ to $D_\mathrm{fd}(A)$.
Therefore we see that $(D^\pi\mathcal{W}(X),D^\pi\mathcal{F}(X),L)$ is an ST-triple.
Note that, in this case, the simple objects of $\mathcal{H}_\mathfrak{a}$ correspond to the zero sections of $D^*S^2$'s.

Now suppose $(D^\pi\mathcal{W}(X),D^\pi\mathcal{F}(X),L)$ is an ST-triple.
For an exact autoequivalence $\Phi : D^\pi\mathcal{W}(X) \to D^\pi\mathcal{W}(X)$ such that $\Phi(D^\pi\mathcal{F}(X)) \subset D^\pi\mathcal{F}(X)$, we have
\begin{equation}\label{fuk-ent}
h_t^{D^\pi\mathcal{F}(X)}(\Phi|_{D^\pi\mathcal{F}(X)}) = h_{-t}^{D^\pi\mathcal{W}(X)}(\Phi^{-1}) = \lim_{N \to \infty} \frac{1}{N} \log \sum_{k \in \mathbf{Z}} \dim \mathrm{Hom}^{-k}(L,\Phi^N(S)) e^{kt}
\end{equation}
by Theorem \ref{app-thm}.
In particular, if $L \in D^\pi\mathcal{W}(X)$ and $S \in D^\pi\mathcal{F}(X)$ are direct sums of some exact Lagrangian submanifolds and $\Phi$ is induced from an exact symplectomorphism $\phi : (X,\theta) \to (X,\theta)$ then the equation \eqref{fuk-ent} can be interpreted as the exponential growth rate of the geometric intersection number of $L$ and $\phi^N(S)$.

It seems that the notion of ST-triple is very useful when studying some properties of $D^\pi\mathcal{W}(X)$ relative to $D^\pi\mathcal{F}(X)$ or comparing $D^\pi\mathcal{W}(X)$ and $D^\pi\mathcal{F}(X)$.
Further interesting consequences of this perspective on Fukaya categories will be explored in a future work in progress \cite{BJK}.

\end{document}